\documentclass{amsart}
\usepackage{graphicx} 

\input{header}

\title{Classification properties for some ternary structures}
\author{Alberto Miguel-Gómez}
\address{Department of Mathematics, Imperial College London, London SW7 2AZ, UK}
\email{a.miguel-gomez22@imperial.ac.uk}

\begin{document}

\maketitle
\begin{abstract}
    We provide a model-theoretic classification of the countable homogeneous $\mathbf{H}_4$-free 3-hypertournament studied by Cherlin, Hubi\v{c}ka, Kone\v{c}n\'y, and Ne\v{s}et\v{r}il. Our main result is that the theory of this structure is $\SOP_3$, $\TP_2$, and $\NSOP_4$. We offer two proofs of this fact: one is a direct proof, and the other employs part of the abstract machinery recently developed by Mutchnik. 
\end{abstract}
\tableofcontents
\section{Introduction}
One of the major current programmes in model theory concerns the extension of methods from stability and simplicity theory to more general classification properties. An example of this program was the development of a structure theory for $\NSOP_1$ theories, mostly carried out by Kaplan and Ramsey in \cite{kaplan2020kimindependence}. The key idea of this development was to generalise the notion of \textit{dividing}, central to the setting of simple theories, to that of \textit{Kim-dividing}, intended to capture the idea of dividing at a generic scale. Kaplan and Ramsey were able to essentially complete the study of $\NSOP_1$ theories using this notion, and the derived notion of \textit{Kim-independence}, given by non-Kim-forking over a model. 

Thus, a natural question is whether we can keep extending these techniques further down the $\NSOP_n$ hierarchy introduced by Shelah in \cite{shelah1996unstable}. Recently, Mutchnik has shown in \cite{mutchnik2023nsop2} that the classes of $\NSOP_2$ and $\NSOP_1$ coincide, and it remains open whether there are any $\NSOP_3$ $\SOP_1$ theories. In contrast, there have been many known examples of natural $\NSOP_4$ $\SOP_3$ theories. The first such examples, already appearing in \cite{shelah1996unstable}, were of a combinatorial nature. 

A contribution to the study of $\NSOP_4$ theories was Conant's study of free amalgamation theories in \cite{conant2017freeamalgamation}, which include many previously known examples such as Henson's generic digraphs (\cite{henson1972digraphs}). In a different direction, Evans and Wong's Hrushovski constructions (\cite{evans2009generic}) provided, in some cases, new examples of strictly $\NSOP_4$ structures. More recently, new algebraic examples of strictly $\NSOP_4$ structures have been found, including d'Elbée, Müller, Ramsey and Siniora's generic $c$-nilpotent Lie algebras over $\F_p$ for $c > 2$ and $p$ prime in \cite{d2025model} and Johnson and Ye's curve-excluding fields in \cite{johnson2025curve}. A common feature of all these examples is the presence of an invariant independence relation defined over models satisfying full existence, symmetry, and stationarity. 

A more recent contribution towards a systematic theory of independence in the context of $\NSOP_4$ theories is due to Mutchnik. In \cite{mutchnik2022conantindependence}, the notion of \textit{Conant-dividing} (also appearing in the literature as \textit{strong Kim-dividing} in \cite{shelah2019kimindependence} and, later, as \textit{universally Kim-dividing} in \cite{kruckman2023new}) is studied in depth. It aims to capture the notion of dividing at a maximally generic scale. A connection is achieved between the related notion of Conant-independence and $\NSOP_4$ by showing that the symmetry of the former implies the latter. Furthermore, in the same paper, Mutchnik generalises the structure theory for $\NSOP_1$ theories relative to a choice of an invariant independence relation satisfying full existence and stationarity over models, and gives criteria for identifying Conant-independence as one of these relative notions of Kim-independence.

In this document, we present the first example of a strictly $\NSOP_4$ structure with no known invariant independent relations satisfying full existence and stationarity, namely, the \textit{$\mathbf{H}_4$-free 3-hypertournament}. More precisely, we prove:
\begin{theorem}\label{thm:main}
    The theory of the countable homogeneous $\mathbf{H}_4$-free 3-hypertournament is $\SOP_3$, $\TP_2$, and $\NSOP_4$.
\end{theorem}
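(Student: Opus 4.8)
The plan is to establish the three assertions by separate arguments: $\SOP_3$ and $\TP_2$ by exhibiting explicit witnessing configurations inside the countable homogeneous model $M$, and $\NSOP_4$ by a global argument excluding every possible $\SOP_4$ pattern. In all three cases I would work relative to the quantifier elimination enjoyed by $M$ in its natural ternary language, so that an arbitrary formula reduces to a Boolean combination of instances of the defining relation $R$, and I would repeatedly invoke the amalgamation property of the underlying Fra\"iss\'e class together with compactness to pass from finite $\mathbf{H}_4$-free diagrams to the infinite configurations demanded by the definitions. The recurring principle is that a finite configuration is realizable in $M$ exactly when it is $\mathbf{H}_4$-free, so that each consistency claim becomes an amalgamation problem and each inconsistency claim becomes the assertion that completing a given diagram necessarily embeds a copy of $\mathbf{H}_4$.

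For $\SOP_3$, I would isolate a two-variable formula $\phi(\bar x, \bar y)$ built from $R$ whose associated comparison relation admits arbitrarily long chains but no directed $3$-cycle. The chain $\phi(\bar a_i, \bar a_j)$ for $i < j$ is produced by iterated amalgamation in the class, while the inconsistency of $\phi(\bar z_0, \bar z_1) \wedge \phi(\bar z_1, \bar z_2) \wedge \phi(\bar z_2, \bar z_0)$ is forced by $\mathbf{H}_4$-freeness: a realized $3$-cycle would force a copy of $\mathbf{H}_4$ on the finitely many vertices involved, which the age omits. Verifying both halves reduces to inspecting a bounded number of finite amalgamation diagrams.

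For $\TP_2$, I would construct an $\omega \times \omega$ array $(\bar a_{i,j})$ and a formula $\phi(\bar x, \bar y)$ so that instances in a common row are pairwise inconsistent while instances chosen one per row along any path $f \colon \omega \to \omega$ are jointly consistent. Path-consistency should again follow from amalgamability of the corresponding $\mathbf{H}_4$-free diagram, and row-inconsistency from the local constraint that $R$ imposes incompatible orientations on a realization of two instances from the same row. The point requiring care is to arrange the array so that these two demands do not interfere, that is, so that the inconsistency is genuinely localized to rows and does not propagate across paths; a mutually indiscernible array extracted by Ramsey is the natural device for enforcing this uniformly.

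The hard part is $\NSOP_4$, both because it is a negative statement quantifying over all formulas and because, as stressed in the introduction, the standard route of producing an invariant independence relation with full existence, symmetry, and stationarity over models---which under the free-amalgamation philosophy would yield $\NSOP_4$---is unavailable here, the hypertournament admitting no such relation. I would therefore give two arguments. The direct one uses quantifier elimination to reduce an alleged $\SOP_4$ configuration to a finite combinatorial obstruction and then shows that the required inconsistency of $4$-cycles always fails: any $4$-cycle diagram for $\phi$ can be completed to an $\mathbf{H}_4$-free finite hypertournament and hence realized, so no formula can combine consistent chains with inconsistent $4$-cycles. The abstract one follows Mutchnik: identify the Morley sequences of $M$, define Conant-dividing and the associated Conant-independence, and verify that Conant-independence is symmetric over models, whence $\NSOP_4$ by the cited criterion, bypassing stationarity entirely. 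I expect the genuine obstacle to be exactly the $4$-cycle completion (equivalently, the symmetry of Conant-independence): one must show that the configuration whose $3$-dimensional analogue is blocked by $\mathbf{H}_4$---and thereby yields $\SOP_3$---becomes unblocked at the $4$-dimensional level, which is the precise sense in which the theory sits strictly between $\SOP_3$ and $\NSOP_4$.
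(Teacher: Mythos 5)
Your high-level architecture matches the paper's exactly (explicit witnesses for $\SOP_3$ and $\TP_2$; a direct proof of $\NSOP_4$ plus an abstract one via symmetry of Conant-independence), but at the crux of $\NSOP_4$ your proposal asserts, rather than proves, the key step, and the assertion as stated is false. You claim that ``any 4-cycle diagram for $\phi$ can be completed to an $\mathbf{H}_4$-free finite hypertournament and hence realized.'' In a 3-hypertournament there is no neutral completion: every unspecified triple must be oriented one way or the other, and this theory is \emph{not} a free amalgamation theory (cf.\ Remark \ref{rem:no-symm-and-stationary-relation}), so no generic completion principle is available. Indeed, the identical claim for 3-cycles fails --- that is precisely why the theory is $\SOP_3$: the 3-cycle diagram for $R(x,y,a)\wedge R(x,b,y)$ cannot be completed $\mathbf{H}_4$-freely. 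Hence the dichotomy between blocked 3-cycles and unblocked 4-cycles cannot come from a completion principle applying to arbitrary formulas $\phi$. The paper's direct proof (Proposition \ref{prop:h4-free-is-nsop4}) first performs the essential reduction to the case where $\phi$ is the type $p(\bar{x},\bar{y})=\tp(\bar{b}_0\bar{b}_1/\bar{c})$ of a consecutive pair from a $\bar{c}$-indiscernible sequence, and only then builds the 4-cycle by hand, choosing specific orientations (e.g., $R(c_i,b_{0,j},x_k)$ and $R(b_{0,i},x_j,b_{1,k})$) and verifying case-by-case, using that all consecutive pairs realize the same pattern $\epsilon(i,j,k)$, that no $\mathbf{H}_4$ arises. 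Indiscernibility is used essentially in that verification; your proposal never invokes it, and without it the construction does not go through.

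The same issue, in milder form, affects the rest. For the abstract route, ``verify that Conant-independence is symmetric'' is again exactly the open content: the paper's verification hinges on introducing a concrete, invariant but non-stationary auxiliary relation $\ind^{\hyp}$ (requiring $A\cap B\subseteq C$ and $R(a,c,b)$ for $a\in A\setminus C$, $c\in C$, $b\in B\setminus C$), proving every type over $M$ has an $M$-invariant $\ind^{\hyp}$-free global extension, and showing that along the resulting Morley sequences no formula consistent with $\bar{x}\cap\bar{b}\subseteq M$ divides; this identifies Conant-independence with disjointness over $M$, which is trivially symmetric, and then Fact \ref{fact:symmetric-conant-independence} applies. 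Without producing such a relation (or an equivalent device), symmetry is not verified. Finally, for $\SOP_3$ and $\TP_2$ your outline is the right shape but exhibits no formulas; the paper's $\TP_2$ witness (Proposition \ref{prop:h4-free-is-tp2}) is a rather delicate configuration of pairs over two added constants $e,f$, and Ramsey extraction of a mutually indiscernible array cannot by itself manufacture the row-inconsistency --- one needs the concrete type whose pairwise realizations force an $\mathbf{H}_4$.
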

This contrasts with the situation for the other three known homogeneous $3$-hypertournaments, all of which are $\NTP_2$. We also relate our structure to higher-arity versions of stability and $\NIP$, namely, that recently studied by Terry and Wolf in \cite{terry2023higherorder} and by Abd-Aldaim, Conant, and Terry in \cite{abd2023higher} in terms of $\NFOP_n$, and that introduced by Shelah in \cite{shelah2014ndependence} and further developed by Chernikov, Hempel, Palacín, Takeuchi and others in several papers (cf., \cite{chernikov2019n}, \cite{hempel2016n}, \cite{chernikov2019mekler}, \cite{chernikov2021n}) in terms of $\NIP_n$. In particular, we show that the homogeneous $\mathbf{H}_4$-free 3-hypertournament is $\IP_2$ and $\NFOP_3$.

There is a long history of interactions between model theory and the combinatorial study of tournaments, i.e., directed graphs $(V, E)$ such that, for all distinct $a, b \in V$, exactly one of $E(a,b)$ and $E(b,a)$ holds. As a result of this investigation, Lachlan famously classified in \cite{lachlan1984tournaments} the countable homogeneous tournaments into three structures up to isomorphism: the generic tournament, the countable dense linear order without endpoints, and the homogeneous local order.

Around the same time, a combinatorial generalisation of tournaments to higher arities was introduced by Assous in \cite{assous1986hypertournaments}, which was soon slightly modified into the notion of an \textit{$n$-hypertournament}. In an attempt to extend Lachlan's classification of homogeneous tournaments, Cherlin studied the homogeneous \textit{4-constrained} 3-hypertournaments (cf. \cite[\S 23A.4.1]{cherlin2022multitournaments}), showing that there exist only four up to isomorphism. It remains open whether there exist any other countable homogeneous 3-hypertournaments beyond these four cases. Here, we focus on one of these four structures, namely, the $\mathbf{H}_4$-free 3-hypertournament. We introduce its definition in \S \ref{sec:basic-props}, and show some of its basic model-theoretic properties, which include weak elimination of imaginaries and the existence of global types Lascar-invariant over $A$ for $A$ a non-empty set. 

Cherlin's four homogeneous 3-hypertournaments have recently appeared in connection with some fundamental questions in structural Ramsey theory. One of the major open questions in this area is that posed by Bodirsky, Pinsker and Tsankov in \cite{bodirsky2011decidability}: Does every homogeneous structure in a finite relational language have a homogeneous expansion by finitely many relations which is Ramsey? In \cite{cherlin2021hypertournaments}, Cherlin, Hubi\v{c}ka, Kone\v{c}n\'y, and Ne\v{s}et\v{r}il set out to find Ramsey expansions of each of the four homogeneous 4-constrained 3-hypertournaments in finite relational languages, and were able to find them for all except one of the four, namely, the $\mathbf{H}_4$-free 3-hypertournament. Thus, this example serves as motivation for extending the traditional techniques of structural Ramsey theory. 

In \S \ref{sec:first-proof-of-nsop4}, we offer the first proof of Theorem \ref{thm:main}. Nonetheless, the main interest of this example for us lies in the second proof of $\NSOP_4$ that we offer in \S\ref{sec:second-proof-of-nsop4}. As preliminary work towards this proof, in \S\ref{sec:lascar-independence}, we adapt the notion of strong Lascar independence from Tartarotti's master's thesis (\cite{tartarotti2023lascar}) to the context of the Kim-dividing order introduced by Mutchnik in \cite{mutchnik2022conantindependence}. Tartarotti defines strong Lascar independence in terms of minimal extensions of types with respect to the fundamental order, and uses this towards a proof of Lascar's Reconstruction Theorem. Using analogous ideas, we obtain an independence relation we can use to characterise, abstractly, the notions of relative Kim's lemma and strong witnessing property, which play an important role in \cite{mutchnik2022conantindependence}. 

In contrast to most known $\NSOP_4$ examples, including all of those we have mentioned before, there cannot exist any independence relation over models of the theory of the $\mathbf{H}_4$-free $3$-hypertournament satisfying full existence, symmetry, and stationarity. In our second proof of $\NSOP_4$ (\S\ref{sec:second-proof-of-nsop4}), we show that:
\begin{theorem}
    In the $\mathbf{H}_4$-free $3$-hypertournament, there is a non-stationary independence relation $\ind^{\textnormal{hti}}$ satisfying full existence and the relative Kim's lemma. In particular, Conant-independence coincides with $\ind^{\textnormal{a}}$.
\end{theorem}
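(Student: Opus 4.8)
The plan is to define $\ind^{\textnormal{hti}}$ explicitly from the height data of the $\mathbf{H}_4$-free $3$-hypertournament and then to establish the two structural properties---full existence and the relative Kim's lemma---directly, leaving the identification of Conant-independence with $\ind^{\textnormal{a}}$ to follow formally from the abstract characterisation developed in \S\ref{sec:lascar-independence}. Concretely, I would set $a \ind^{\textnormal{hti}}_C b$ to hold precisely when the hyperedges linking $a$ and $b$ are exactly those forced by the heights of the relevant tuples over $C$ together with the configuration already realised in $C$, so that no further incidence is imposed across the two sides. With this definition, invariance, monotonicity, and normality are immediate from the combinatorics of the height function, and the content of the theorem sits in the remaining clauses.

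For full existence I would argue by amalgamation: given $a$, $b$ and a base $C$, one wants $a' \equiv_C a$ with $a' \ind^{\textnormal{hti}}_C b$. The point is that the height-independent superposition of $\tp(a/C)$ and $\tp(b/C)$ never produces a copy of $\mathbf{H}_4$, so the resulting finite configuration lies in the age of the structure and is realised by homogeneity. Non-stationarity is then exhibited by writing down two $\ind^{\textnormal{hti}}$-independent extensions of a single type over $C$ that disagree on some hyperedge and are not conjugate over $C$; this is exactly what one expects from Theorem~\ref{thm:main}, and it is consistent with---indeed forced by---the earlier observation that no invariant independence relation over models can simultaneously satisfy full existence, symmetry, and stationarity.

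The heart of the proof, and where I expect the main obstacle, is the relative Kim's lemma. Using the characterisation from \S\ref{sec:lascar-independence}, it should suffice to show that $\ind^{\textnormal{hti}}$-Morley sequences are generic enough to witness Kim-dividing: if $\varphi(x,b)$ Kim-divides over a model $M$, then it divides along every $\ind^{\textnormal{hti}}$-Morley sequence in $\tp(b/M)$. I would reduce this to a finite analysis of how hyperedges propagate along a height-independent sequence $(b_i)_{i<\omega}$, showing that $\mathbf{H}_4$-freeness simultaneously permits the amalgamation needed to construct the sequence and rigidly determines the incidences among the $b_i$, so that the scale at which $\varphi(x,b)$ divides along such a sequence is the maximally generic one. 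The delicate balance is between genericity---needed so that the Morley sequence detects dividing at all---and the rigidity forced by forbidding $\mathbf{H}_4$, which is what makes the dividing witnessed here Conant-dividing rather than ordinary dividing.

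Finally, with full existence and the relative Kim's lemma in place for $\ind^{\textnormal{hti}}$, Mutchnik's criterion identifies Conant-independence with the relative Kim-independence attached to $\ind^{\textnormal{hti}}$. Since the invariant extensions underlying $\ind^{\textnormal{hti}}$ are precisely the global Lascar-invariant types over non-empty sets exhibited in \S\ref{sec:basic-props}, this relative Kim-independence is the relation $\ind^{\textnormal{a}}$, which yields the stated coincidence.
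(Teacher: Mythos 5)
Your proposal has two genuine gaps, one in the definition and one in the key step.

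First, the definition. There is no ``height function'' in the $\mathbf{H}_4$-free $3$-hypertournament, and, more fundamentally, a relation of the form ``no hyperedges beyond those forced by the two sides'' cannot be formulated here: in a $3$-hypertournament \emph{every} triple of distinct points must be oriented one way or the other, so there is no free superposition of $\tp(a/C)$ and $\tp(b/C)$ — this is precisely why the theory is not a free amalgamation theory and admits no stationary relation (Remark \ref{rem:no-symm-and-stationary-relation}). The relation actually used is a concrete, deliberately asymmetric choice of orientation: $A \ind^{\hyp}_C B$ iff $A \cap B \subseteq C$ and $\models R(a,c,b)$ for all $a \in A \setminus C$, $c \in C$, $b \in B \setminus C$; and then $\ind^{\hti}$ is defined as the conjunction of $\ind^{\hyp}$ with $\ind^{\textnormal{i}}$. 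The intersection with $\ind^{\textnormal{i}}$ is not cosmetic: by Remark \ref{rem:rel-kim-lemma-and-invariance}, any relation satisfying the relative Kim's lemma must have all of its free global types $M$-invariant, which $\ind^{\hyp}$ alone does not guarantee. Your sketch never addresses this invariance requirement.

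Second, and more seriously, your formulation of the key step is the implication in the wrong direction. You propose to show: if $\phi(x,b)$ Kim-divides over $M$, then it divides along every $\ind^{\hti}$-Morley sequence. The relative Kim's lemma requires the \emph{converse}: global $\ind^{\hti}$-free types must be $\leq_K$-\emph{least}, i.e., if $\phi$ divides along an $\ind^{\hti}$-Morley sequence, then it divides along \emph{every} $M$-invariant Morley sequence (it Conant-divides). Your implication would instead make such types $\leq_K$-greatest, and it is in fact false in this theory: since $T_{\mathbf{H}_4\textnormal{-free}}$ is $\SOP_3$, hence $\SOP_1$, Kim's lemma for $\ind^{\textnormal{i}}$ fails, so some formula Kim-divides along one invariant Morley sequence while failing to divide along the $\ind^{\hti}$-sequences. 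What makes the correct implication work is a triviality phenomenon absent from your sketch: $\tp(\bar{a}/M\bar{b})$ $\ind^{\hti}$-Kim-divides over $M$ iff $\bar{a} \cap \bar{b} \not\subseteq M$ (Corollary \ref{cor:hti-kim-ind-is-trivial}), proved by an explicit consistency construction; combined with the fact that any formula forcing $\bar{x} \cap \bar{b} \not\subseteq M$ Conant-divides (Remark \ref{rem:non-disjointness-implies-conant-div}) and an $\omega$-categoricity reduction to formulas isolating complete types (Lemma \ref{lem:omega-cat-only-inclusion-of-isolating-fmls}), this yields the relative Kim's lemma and, at the same time, the identification of Conant-independence with disjointness over $M$, i.e., with $\ind^{\textnormal{a}}$ by triviality of $\acl$. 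Your final appeal to the invariant types of \S\ref{sec:basic-props} is also off target: those types are a particular construction and are not ``precisely'' the $\ind^{\hti}$-free ones, nor is that identification what drives the coincidence with $\ind^{\textnormal{a}}$.
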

Thus, although our proof uses some of the concepts and results that Mutchnik introduces in \cite{mutchnik2022conantindependence}, the specific methods applied to examples in that paper use the existence of an independence relation satisfying monotonicity, full existence, and stationarity over models, which we have not been able to find in the $\mathbf{H}_4$-free 3-hypertournament. We conjecture that, in fact, there is no such independence relation defined over models of this theory. However, regardless of the outcome of this conjecture, the present example remains a theoretical novelty in the context of $\NSOP_4$ as the first application of Mutchnik's concepts with a non-stationary independence relation.
\subsection*{Acknowledgments}
The present work was completed during my PhD at Imperial College London supported by an EPSRC scholarship. I am deeply grateful to my supervisors, David Evans and Charlotte Kestner, for their guidance, suggestions, and constant support. I would also like to thank Nicholas Ramsey, Scott Mutchnik, and Paolo Marimon for the enlightening discussions on the results of this paper, and Christian d'Elbée for the encouragement to write this article. Finally, I thank the anonymous referee for a very careful reading of this paper and many useful suggestions to improve its quality.
\subsection*{Keywords} $\NSOP_4$, homogeneous hypertournaments, Conant-independence
\section{Conventions and preliminaries}
Since we need to keep track of elements and tuples in many of the proofs concerning 3-hypertournaments, we use $a$ to denote an element, the bar notation $\bar{a}$ to denote a tuple, and $A$ to denote a set. For an $n$-tuple $\bar{a}$, we write $\bar{a} = (a_0, \dots, a_{n-1})$. As usual, we denote by $AB$ the union of sets $A \cup B$.

For an ordinal $\alpha$, we denote by $\alpha^{< \omega}$ the tree of finite sequences of elements of $\alpha$, and we denote its partial order by $\trianglelefteq$. For $\eta \in \alpha^{\omega}$ and $i \in \omega$, we write $\eta|_i$ for the restriction of $\eta$ to the first $i$ entries. For $\eta, \nu \in \alpha^{< \omega}$, we write $\eta^\frown \nu$ to denote the concatenation of $\eta$ and $\nu$ as sequences.
\subsection{Fraïssé's Theorem}
We quickly review some of the main concepts and results from the theory of homogeneous structures that we employ throughout the present work. More details can be found in, e.g., \cite[\S\S 2.6-2.8]{cameron1990oligomorphic}.
\begin{definition}
    Let $\mathcal{L}$ be a relational language. 
    \begin{enumerate}[(i)]
        \item We say an $\mathcal{L}$-structure $M$ is \textbf{homogeneous} if, for all finite substructures $A, B \subset M$, any isomorphism $f \colon A \to B$ extends to an automorphism $g \colon M \to M$. 
        \item The \textbf{age} of $M$ is the class of all $\mathcal{L}$-structures isomorphic to finite substructures of $M$. 
        \item We say a class $\mathcal{C}$ of finite $\mathcal{L}$-structures has the \textbf{amalgamation property}, or \textbf{AP}, if, for all $A, B_1, B_2 \in \mathcal{C}$ and embeddings $f_i \colon A \to B_i$ for $i = 1,2$, there exist some $C \in \mathcal{C}$ and embeddings $g_i \colon B_i \to C$ for $i = 1,2$ making the following diagram commute:
        \begin{equation*}
            \begin{tikzcd}
                & C & \\
                B_1 \arrow[ru, dashed, "g_1"] & & B_2 \arrow[lu, swap, dashed, "g_2"] \\
                & A \arrow[lu, "f_1"] \arrow[ru, swap, "f_2"] & 
            \end{tikzcd}
        \end{equation*}
        We say that $\mathcal{C}$ has the \textbf{strong amalgamation property}, or \textbf{SAP}, if the above holds and, in addition, whenever $b_i \in B_i$ for $i=1,2$ are such that $g_1(b_1) = g_2(b_2)$, there is some $a \in A$ such that $b_1 = f_1(a)$ and $b_2 = f_2(a)$. 
    \end{enumerate}
\end{definition}
To prove that a class has AP, it is enough to show this for $\size{B_i \setminus f_i(A)} = 1$ for $i = 1,2$.

Let us note in passing that many model-theorists call a structure as in (i) above \textbf{ultrahomogeneous} instead. However, in what follows, we adopt the convention from \cite{cherlin2021hypertournaments} and use ``homogeneous'' as above.
\begin{definition}
    Let $\mathcal{L}$ be a relational language. We say a class $\mathcal{C}$ of finite $\mathcal{L}$-structures is a (resp., \textbf{strong}) \textbf{amalgamation class} if it is closed under substructures and isomorphisms, has countably many isomorphism classes, and has AP (resp., SAP). 
\end{definition}
\begin{fact}[Fraïssé's Theorem] \label{fact:fraisse}
    Let $\mathcal{L}$ be a relational language and $\mathcal{C}$ be a class of finite $\mathcal{L}$-structures. Then $\mathcal{C}$ is an amalgamation class iff there is a countable homogeneous $\mathcal{L}$-structure $M$ such that $\mathcal{C}$ is the age of $M$. This $M$ is unique up to isomorphism. (We call $M$ the \textbf{Fraïssé limit} of $\mathcal{C}$.)
\end{fact}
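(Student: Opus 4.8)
The plan is to prove both implications together with uniqueness by isolating a single \emph{extension property} that characterises the Fraïssé limit, and then running back-and-forth arguments against it. Call a countable $\mathcal{L}$-structure $M$ with $\Age(M) = \mathcal{C}$ \emph{generic} if, whenever $A \subseteq M$ is a finite substructure and $A \subseteq B$ with $B \in \mathcal{C}$, the inclusion $A \hookrightarrow M$ extends to an embedding $B \hookrightarrow M$. By the reduction remark preceding the statement, it suffices to verify this for one-point extensions $B$.

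For the easy direction ($\Leftarrow$), I would take $M$ countable homogeneous and set $\mathcal{C} = \Age(M)$. Closure under isomorphism is immediate, and closure under substructures holds because a substructure of a finite substructure of $M$ is again one; since $M$ is countable it has only countably many finite subsets, hence countably many isomorphism types of finite substructures. For AP, given embeddings $f_i : A \to B_i$ with $A, B_1, B_2 \in \mathcal{C}$, I would fix embeddings $h_i : B_i \to M$; then $h_1 f_1(A)$ and $h_2 f_2(A)$ are isomorphic finite substructures of $M$, so homogeneity supplies an automorphism $\sigma$ of $M$ with $\sigma h_1 f_1 = h_2 f_2$. Taking $C$ to be the (finite, hence in $\mathcal{C}$) substructure of $M$ generated by $\sigma h_1(B_1) \cup h_2(B_2)$, with $g_1 = \sigma h_1$ and $g_2 = h_2$, witnesses AP.

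For the harder direction ($\Rightarrow$), I would first record that a non-empty $\mathcal{C}$ closed under substructures contains the empty structure, so AP with $A = \emptyset$ yields the joint embedding property. Then I would build $M$ as the union of a chain $M_0 \subseteq M_1 \subseteq \cdots$ of members of $\mathcal{C}$: using that $\mathcal{C}$ has only countably many isomorphism types, I would set up a bookkeeping enumeration of all requirements of the form ``a finite $A \subseteq M_n$ together with a one-point extension $A \subseteq B$ in $\mathcal{C}$'', and at the corresponding stage use AP to amalgamate $M_n$ and $B$ over $A$, letting $M_{n+1}$ be the resulting amalgam. The structure $M = \bigcup_n M_n$ is then countable, has $\Age(M) = \mathcal{C}$ (every finite substructure lies in some $M_n \in \mathcal{C}$, and conversely every member of $\mathcal{C}$ embeds by building it up one point at a time from $\emptyset$), and is generic by construction.

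Finally I would show that any generic structure is homogeneous and that any two are isomorphic, both by back-and-forth. Given finite $A, B \subseteq M$ and an isomorphism $p : A \to B$, and a point $a \in M \setminus A$, I would transport the one-point extension $A \subseteq A \cup \{a\}$ across $p$ to a one-point extension $B \subseteq B'$ with $B' \in \mathcal{C}$, apply genericity to extend the inclusion $B \hookrightarrow M$ to an embedding of $B'$, and let the image of the new point be $p(a)$; alternating domain and range over an enumeration of $M$ produces an automorphism extending $p$, proving homogeneity. The same alternation run between two generic structures $M, N$ of age $\mathcal{C}$ produces an isomorphism $M \cong N$; since the $M$ built above is generic, and by the $\Leftarrow$ direction any countable homogeneous structure with age $\mathcal{C}$ is generic, uniqueness of the Fraïssé limit follows. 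The main obstacle is the bookkeeping in the chain construction: one must enumerate the extension requirements with enough multiplicity that every finite configuration appearing anywhere in the final $M$ has all of its one-point extensions realised, which is exactly where countability of the isomorphism classes is indispensable.
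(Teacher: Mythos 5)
The paper does not actually prove this statement: it is quoted as a classical \emph{Fact}, with the reader pointed to the literature (Cameron, \S\S 2.6--2.8). Your proposal is therefore necessarily a different route --- a self-contained proof --- and it is essentially the standard Fra\"iss\'e argument, correctly executed. You isolate the one-point extension property (``genericity''), derive AP for the age of a homogeneous structure by composing embeddings with an automorphism (this works because in a relational language a finite union of finite substructures of $M$ is again a finite substructure, so your amalgam $C$ really lies in the age), build the limit as the union of a bookkept chain of amalgamations (correctly noting that JEP follows from AP over the empty structure, which is legitimate here precisely because the language is relational and ages are substructure-closed), and obtain homogeneity and uniqueness by back-and-forth against genericity; the multiplicity remark about the enumeration is exactly the right point to flag. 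One small imprecision: in the uniqueness step you assert that ``by the $\Leftarrow$ direction any countable homogeneous structure with age $\mathcal{C}$ is generic,'' but the $\Leftarrow$ direction as you proved it only shows that the age is an amalgamation class. What you actually need is the separate (easy) observation that homogeneity implies genericity: given finite $A \subseteq M$ and a one-point extension $A \subseteq B$ with $B \in \mathcal{C} = \Age(M)$, pick any embedding $h \colon B \to M$ and compose it with an automorphism of $M$ extending $(h|_A)^{-1} \colon h(A) \to A$ to get an embedding of $B$ into $M$ fixing $A$ pointwise. Since this is the same automorphism trick you already use in your AP argument, the gap is cosmetic rather than substantive, and the proposal as a whole is a correct proof of the quoted Fact.
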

\begin{fact}[\protect{cf. \cite[2.22]{cameron1990oligomorphic}}] \label{fact:fraisse-limit-omega-cat}
    Let $M$ be a countable homogeneous structure and let $T = \textnormal{Th}(M)$. Then $T$ is $\omega$-categorical and has quantifier elimination.
\end{fact}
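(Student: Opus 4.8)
The statement is standard; the plan is to establish $\omega$-categoricity first, by way of oligomorphy and the Ryll-Nardzewski theorem, and then to deduce quantifier elimination from homogeneity together with the finiteness of the type spaces. Throughout I use the standing assumption that $\l$ is a \emph{finite} relational language (as it is for the hypertournaments considered here); this is what keeps the counting below finite, and without it $\omega$-categoricity can fail.

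First I would show that $\Aut(M)$ is oligomorphic, i.e.\ that it acts with only finitely many orbits on $M^n$ for each $n < \omega$. Two observations combine to give this. On the one hand, since $\l$ is finite relational, there are only finitely many isomorphism types of $\l$-structures on any fixed finite number of points. On the other hand, homogeneity says that any isomorphism between the finite substructures induced on two $n$-tuples extends to an automorphism of $M$; hence the $\Aut(M)$-orbit of a tuple $\bar{a}$ is determined precisely by its quantifier-free type, that is, by the isomorphism type of the substructure it enumerates together with the pattern of equalities among its entries. Putting these together, $M^n$ splits into finitely many orbits, so $\Aut(M)$ is oligomorphic. The Engeler--Ryll-Nardzewski--Svenonius theorem then gives that $T$ is $\omega$-categorical; in particular $M$ is the countable saturated model of $T$, every complete type over $\emptyset$ is realised in $M$, and there are only finitely many such types in each arity, each of them isolated.

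For quantifier elimination, the task is to upgrade homogeneity, which is a property of $M$, to the corresponding statement about all models of $T$. By homogeneity, two tuples of $M$ with the same quantifier-free type lie in the same $\Aut(M)$-orbit and so have the same complete type; thus in $M$ the quantifier-free type of a tuple determines its complete type. Since every complete $n$-type $p$ over $\emptyset$ is realised in $M$, the assignment sending $p$ to its quantifier-free part $q_p$ is therefore injective, hence a bijection onto the finite set of quantifier-free $n$-types. As $\l$ is relational, each $q_p$ is isolated by a single quantifier-free formula $\psi_p(\bar{x})$, namely the conjunction of literals recording all atomic relations and (in)equalities among the variables. Given an arbitrary $\l$-formula $\phi(\bar{x})$, I would let $S$ be the finite set of complete $n$-types containing $\phi$ and verify that $\phi(\bar{x})$ is equivalent modulo $T$ to $\bigvee_{p \in S} \psi_p(\bar{x})$: in any model, a tuple satisfies $\phi$ iff its complete type lies in $S$, and by the bijection above this happens iff its quantifier-free type equals some $q_p$ with $p \in S$, i.e.\ iff it satisfies that disjunction.

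The step I expect to be most delicate is exactly this final transfer from $M$ to arbitrary models. Homogeneity only tells us that quantifier-free type determines complete type \emph{inside} $M$, whereas quantifier elimination is a statement about $T$; the argument genuinely leans on $\omega$-categoricity to guarantee that the finitely many complete types are all realised in $M$, which is what forces the correspondence $p \mapsto q_p$ to be a bijection and hence valid in every model. A secondary point to keep explicit is the finite-relational-language hypothesis, since it is the source of the finiteness of both the orbit count and the number of quantifier-free types.
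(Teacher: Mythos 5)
Your proof is correct, but note that the paper does not prove this statement at all: it is labelled a \emph{Fact} and cited to Cameron's book \cite[2.22]{cameron1990oligomorphic}, so there is no in-paper argument to compare against. What you have written is essentially the standard proof from the cited literature: oligomorphicity of $\Aut(M)$ from homogeneity plus the finiteness of quantifier-free $n$-types, the Engeler--Ryll-Nardzewski--Svenonius theorem for $\omega$-categoricity, and then quantifier elimination via the resulting bijection between complete $n$-types (all realised in $M$ and finite in number) and quantifier-free $n$-types, each isolated by a single conjunction of literals. Your attention to the finite-relational-language hypothesis is a genuine point in your favour: as literally stated in the paper (``countable homogeneous structure'' in a relational language), the $\omega$-categoricity claim is false --- e.g.\ take infinitely many unary predicates $P_i$ and the structure on $\omega$ in which $P_i$ holds of $i$ alone; this is homogeneous but has infinitely many $1$-types. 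The hypothesis is harmless in context, since the paper only applies the fact to $\mathcal{L} = \{R\}$, but your proof correctly identifies where finiteness enters (the orbit count and the isolation of quantifier-free types by single formulas), and your observation that the transfer from ``quantifier-free type determines type \emph{in} $M$'' to quantifier elimination for all models of $T$ genuinely requires $\omega$-categoricity (so that every type of $T$ is realised in $M$) is exactly the right thing to be careful about. The only cosmetic gap is the degenerate case where no complete type contains $\phi$, in which case the empty disjunction should be read as a quantifier-free contradiction such as $\neg(x_0 = x_0)$.
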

\begin{fact}[\protect{cf. \cite[2.15]{cameron1990oligomorphic}}] \label{fact:trivial-acl-for-sap}
    Let $M$ be a countable homogeneous structure in a relational language. Then the age of $M$ is a strong amalgamation class iff $\acl(A) = A$ for all finite $A \subset M$. 
\end{fact}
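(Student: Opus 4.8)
The plan is to prove the two implications separately, with the governing dictionary supplied by Fact~\ref{fact:fraisse-limit-omega-cat}: since $T = \th(M)$ eliminates quantifiers and is $\omega$-categorical, the complete type of a finite tuple over a finite set $A \subset M$ is determined by the isomorphism type of the substructure it generates together with $A$, so realizing a member of $\mathcal{C} := \Age(M)$ over $A$ in $M$ amounts to realizing a complete type over $A$; and by Ryll--Nardzewski an element $b$ lies in $\acl(A)$ precisely when $\tp(b/A)$ has only finitely many realizations in $M$. Throughout I identify the embeddings $f_i$ with inclusions, so that a strong amalgam of $B_1, B_2$ over $A$ is a $C \in \mathcal{C}$ into which both embed over $A$ with $B_1 \cap B_2 = A$.

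For the forward direction ($\text{SAP} \Rightarrow \acl(A) = A$) I would argue by contradiction. Suppose $A \subset M$ is finite and $b \in \acl(A) \setminus A$, and put $B = A \cup \{b\} \in \mathcal{C}$. Iterating strong amalgamation of $B$ against the structure built so far, all over $A$, produces for each $n$ a $C_n \in \mathcal{C}$ containing $A$ together with pairwise distinct points $b_1, \dots, b_n \notin A$ satisfying $A \cup \{b_i\} \cong_A B$. Realizing $C_n$ over $A$ inside $M$ then yields $n$ distinct realizations of $\qftp(b/A)$, hence (by quantifier elimination) of $\tp(b/A)$; as $n$ is arbitrary, $\tp(b/A)$ has infinitely many realizations, contradicting $b \in \acl(A)$.

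For the reverse direction ($\acl(A) = A \Rightarrow \text{SAP}$), the class $\mathcal{C}$ already has AP by Fact~\ref{fact:fraisse}, so only the disjointness is at stake, and I would reduce it by induction on $\size{B_1 \setminus A}$, peeling off one point of $B_1$ at a time exactly as in the one-point reduction for AP but now also tracking disjointness, to the following core case: given a one-point extension $E \cup \{b\}$ with $b \notin E$ and an arbitrary $D \supseteq E$ in $\mathcal{C}$, produce a strong amalgam over the finite base $E$. Realizing $D$ inside $M$ and using the extension property of the Fraïssé limit (a consequence of homogeneity), the type $p := \tp(b/E)$ is realized in $M$; its realization set $p(M)$ is invariant under $\Aut(M/E)$ and disjoint from $E$ (since $p \vdash x \neq e$ for each $e \in E$), so were it finite it would be contained in $\acl(E) = E$, which is absurd. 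Hence $p(M)$ is infinite and I may choose $b^\ast \in p(M) \setminus D$, making $D \cup \{b^\ast\}$ the desired strong amalgam.

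The main obstacle I anticipate is not any single hard estimate but getting the bookkeeping and the dictionary exactly right: verifying that the one-point reduction valid for AP genuinely upgrades to SAP (the induction must keep each freshly added point disjoint from all those introduced before, which is where I use that the new copy $b^\ast$ can be taken outside the finite set $D$), and confirming that ``finitely many realizations'' is correctly matched with algebraicity through $\omega$-categoricity. Once this translation is in place, both directions are routine applications of homogeneity and quantifier elimination.
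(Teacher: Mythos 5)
This statement is quoted background: the paper states it as a Fact with a citation to Cameron's book and gives no proof of its own, so there is nothing internal to compare against. Judged on its own merits, your argument is correct and is essentially the standard proof of this equivalence. The forward direction is sound: iterated strong amalgamation of $A \cup \{b\}$ with itself over $A$ stays inside $\Age(M)$, and embedding each $C_n$ into $M$ over $A$ (via homogeneity) yields arbitrarily many realisations of $\tp(b/A)$, which needs only the easy half of the algebraicity dictionary, since a type with infinitely many realisations can contain no algebraic formula. The reverse direction is also sound, and you correctly isolate the two points that need care: the one-point core case is where $\omega$-categoricity (Fact \ref{fact:fraisse-limit-omega-cat}, via isolation of types over finite sets, equivalently the orbit characterisation of $\acl$ you attribute to Ryll--Nardzewski) is genuinely used to conclude that the $\Aut(M/E)$-invariant set $p(M)$ is infinite when $\acl(E)=E$; and the induction on $\size{B_1 \setminus A}$ does upgrade AP to SAP, since strongly amalgamating $B_1'$ with $B_2$ over $A$ to get $C'$, and then $B_1 = B_1' \cup \{b\}$ with $C'$ over $B_1'$, gives $B_1 \cap B_2 \subseteq B_1 \cap C' = B_1'$ and hence $B_1 \cap B_2 = B_1' \cap B_2 = A$. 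One purely cosmetic remark: in the core case the abstract point $b$ is not literally an element of $M$, so $p$ should be defined as $\tp(b'/E)$ for a copy $b' \in M$ of $b$ over $E$ supplied by the extension property; your opening convention of identifying embeddings with inclusions covers this.
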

\subsection{Generalised stability theory}
From now on, let $T$ be a complete theory and $\M \models T$ a monster model, i.e., a sufficiently saturated and strongly homogeneous model. As usual, we assume that all elements, tuples, and sets are small and embed into $\M$. Types defined over $\M$ are called \textbf{global}. Let us recall the relevant definitions of the classification properties we use:
\begin{definition}
Let $T$ be a complete theory.
    \begin{enumerate}[(i)]
        \item We say $T$ has the \textbf{tree property} (or is \textbf{TP}) if there are a formula $\phi(\bar{x}, \bar{y})$, $k \in \omega$, and a tree $(\bar{a}_\eta)_{\eta \in \omega^{<\omega}}$ such that:
        \begin{itemize}
            \item for all $\eta \in \omega^{\omega}$, the set $\{\phi(\bar{x}, \bar{a}_{\eta|_i}) : i \in \omega\}$ is consistent modulo $T$;
            \item for all $\eta \in \omega^{<\omega}$, any $k$-subset of $\{\phi(\bar{x}, \bar{a}_{\eta^\frown \langle i\rangle}) : i \in \omega\}$ is inconsistent modulo $T$.
        \end{itemize}
        Otherwise, we say $T$ is \textbf{simple}.
        \item We say $T$ has the \textbf{tree property of the second kind} (or is $\mathbf{TP}_2$) if there are a formula $\phi(\bar{x},\bar{y})$, $k \in \omega$, and an array $(\bar{a}_{i,j})_{i, j \in \omega}$ such that:
        \begin{itemize}
            \item for all $f \colon \omega \to \omega$, the set $\{\phi(\bar{x}, \bar{a}_{i,f(i)}) : i \in \omega\}$ is consistent modulo $T$;
            \item for all $i \in \omega$, any $k$-subset of $\{\phi(\bar{x}, \bar{a}_{i,j}) : j \in \omega\}$ is inconsistent modulo $T$. 
        \end{itemize}
        Otherwise, we say $T$ is $\mathbf{NTP}_2$.
        \item We say $T$ has the \textbf{1-strong order property} (or is $\mathbf{SOP}_1$) if there are a formula $\phi(\bar{x}, \bar{y})$ and a tree $(\bar{a}_\eta)_{\eta \in 2^{< \omega}}$ such that:
        \begin{itemize}
            \item for all $\eta \in 2^{\omega}$, the set $\{\phi(\bar{x}, \bar{a}_{\eta|_i}) : i \in \omega\}$ is consistent modulo $T$;
            \item for all $\eta, \nu \in 2^{< \omega}$ with $\nu^\frown \langle 0 \rangle \trianglelefteq \eta$, the set $\{\phi(\bar{x}, \bar{a}_{\eta}), \phi(\bar{x}, \bar{a}_{\nu^\frown \langle 1 \rangle})\}$ is inconsistent modulo $T$.
        \end{itemize}
        Otherwise, we say $T$ is $\mathbf{NSOP}_1$.
        \item Let $3 \leq n \in \omega$. We say $T$ has the \textbf{$n$-strong order property} (or is $\mathbf{SOP}_n$) if there are a formula $\phi(\bar{x}, \bar{y})$ with $\size{\bar{x}} = \size{\bar{y}}$ and a sequence $(\bar{a}_i)_{i \in \omega}$ such that:
        \begin{itemize}
            \item $\models \phi(\bar{a}_i, \bar{a}_j)$ for all $i < j$;
            \item the set $\{\phi(\bar{x}_0, \bar{x}_1), \phi(\bar{x}_1, \bar{x}_2), \dots, \phi(\bar{x}_{n-2}, \bar{x}_{n-1}), \phi(\bar{x}_{n-1}, \bar{x}_0)\}$ is inconsistent modulo $T$.
        \end{itemize}
        Otherwise, we say $T$ is $\mathbf{NSOP}_n$.
    \end{enumerate}
\end{definition}
Another recent programme in model theory consists in extending the traditional binary classification-theoretic properties to higher arities. Two of the most fruitful notions in this direction are the following, introduced by Shelah in \cite{shelah2014ndependence} and by Terry and Wolf in \cite{terry2023higherorder} (extended by Abd-Aldaim, Conant, and Terry in \cite{abd2023higher}), respectively:
\begin{definition} Let $T$ be a complete theory.
    \begin{enumerate}[(i)]
        \item Let $0 < n \in \omega$. We say $T$ has the \textbf{$n$-independence property} (or is $\mathbf{IP}_n$) if there are a formula $\phi(\bar{x}_0, \dots, \bar{x}_{n-1}, \bar{y})$ and sequences $(\bar{a}_{0,i})_{i \in \omega}, \dots, \linebreak (\bar{a}_{n-1,i})_{i<\omega}$ and $(\bar{b}_I)_{I \subseteq \omega^n}$ such that
        \begin{equation*}
            \models \phi(\bar{a}_{0,i_0}, \dots, \bar{a}_{n-1, i_{n-1}}, \bar{b}_I) \iff (i_0, \dots, i_{n-1}) \in I
        \end{equation*}
        for all $i_0, \dots, i_{n-1} \in \omega$ and $I \subseteq \omega^n$. Otherwise, we say $T$ is $\mathbf{NIP}_n$. When $n = 1$, we just say $T$ is $\mathbf{NIP}$.
        \item Let $2 \leq n \in \omega$. We say $T$ has the \textbf{$n$-functional order property} (or is $\mathbf{FOP}_n$) if there are a formula $\phi(\bar{x}_0, \dots, \bar{x}_n)$ and sequences $(\bar{a}_f)_{f \colon \omega^{n-1} \to \omega}$, $(\bar{b}_{0,i})_{i \in \omega}, \dots, (\bar{b}_{n-1, i})_{i \in \omega}$ such that
        \begin{equation*}
            \models \phi(\bar{a}_f, \bar{b}_{1,i_1}, \dots, \bar{b}_{n, i_{n}}) \iff i_{n} \leq f(i_1, \dots, i_{n-1})
        \end{equation*}
        for all $i_1, \dots, i_n \in \omega$ and $f \colon \omega^{n-1} \to \omega$. Otherwise, we say $T$ is $\mathbf{NFOP}_n$.
    \end{enumerate}
\end{definition}
The following is a diagram of the (known) implications between the above notions. This combines many results from the literature; a good recopilation of most of them containing all the relevant references is \cite{conant2012dividing} (the relation between $\NIP_n$ and $\NFOP_n$ is more recent and appears in \cite{abd2023higher}).
\begin{equation*}
        \begin{tikzcd}[sep=small]
            \NFOP_2 \arrow[r, Rightarrow] & \NIP_2 \arrow[r, Rightarrow] & \NFOP_3 \arrow[r, Rightarrow] & \NIP_3 \arrow[r, Rightarrow] & \cdots \\
            \textnormal{\NIP} \arrow[u, Rightarrow] \arrow[r, Rightarrow] & \NTP_2 \\
            \textnormal{stable} \arrow[u, Rightarrow] \arrow[r, Rightarrow] & \textnormal{simple} \arrow[u, Rightarrow] \arrow[r, Rightarrow] & \NSOP_1 \arrow[r, Rightarrow] & \NSOP_3 \arrow[r, Rightarrow] & \NSOP_4 \arrow[r, Rightarrow] & \cdots
        \end{tikzcd}
    \end{equation*}
It is open whether there exists an $\NSOP_n$ theory for any $n \geq 3$ which is $\NTP_2$ but not simple. 
\subsection{A primer on independence relations}
Formal definitions of the notion of an ``independence relation'' are commonplace in the literature (see, e.g., \cite{adler2005explanation}). However, these definitions fall short of capturing more recent developments within model theory, such as the notion of Kim-independence in $\NSOP_1$ theories or that of Conant-independence that we study below. For this reason, we follow \cite{delbee2023axiomatic} and freely use the term \textbf{independence relation} to denote an $\Aut(\M)$-invariant ternary relation $\ind$ on subsets of our fixed monster model, i.e., $A \ind_C B$ iff $\sigma(A) \ind_{\sigma(C)} \sigma(B)$ for all $\sigma \in \Aut(\M)$. (This use of the term is already prefigured in \cite{chernikov2012forking}, although the authors of that paper refer to this as a ``pre-independence relation.'')

There are several properties that an independence relation may satisfy, which already appear, in some form, in \cite{shelah1978classification}. Their explicit versions in this abstract setting can be traced back to \cite{baldwin1988fundamentals}:
\begin{itemize}
    \item \textit{Left monotonicity}: For all $A, B, C \subset \M$, if $A \ind_C B$ and $A' \subseteq A$, then $A' \ind_C B$.
    \item \textit{Right monotonicity}: For all $A, B, C \subseteq \M$, if $A \ind_C B$ and $B' \subseteq B$, then $A \ind_C B'$.
    \item \textit{Right transitivity}: For all $A \subset \M$ and $D \subseteq C \subseteq B\subset \M$, if $A \ind_C B$ and $A \ind_D C$, then $A \ind_D B$.
    \item \textit{Existence}: For all tuples $\bar{a} \subset \M$ and sets $C \subset \M$, $\bar{a} \ind_C C$.
    \item \textit{Full existence}: For all tuples $\bar{a} \subset \M$ and sets $B, C \subseteq \M$, there is $\bar{a}' \equiv_C \bar{a}$ such that $\bar{a}' \ind_C B$.
    \item \textit{Left extension}: For all $A, A', B, C \subset \M$, if $A \ind_C B$ and $A \subseteq A'$, there is some $B' \equiv_{AC} B$ such that $A' \ind_C B'$.
    \item \textit{Right extension}: For all $A, B, B', C \subseteq \M$, if $A \ind_C B$ and $B \subseteq B'$, there is some $A' \equiv_{BC} A$ such that $A' \ind_C B'$.
    \item \textit{Stationarity over models}: For $A, A', B \subset \M$ and $M \prec \M$, if $A \ind_M B$, $A' \ind_M B$, and $A \equiv_M A'$, then $A \equiv_{MB} A'$. 
\end{itemize}
We often say that $\ind$ satisfies \textbf{monotonicity} if it satisfies both left and right monotonicity.
We call $\ind$ an independence relation \textbf{over models} if its base can only be a model, i.e., if $A \ind_C B$ is only defined if $C$ is a model of $T$.
\begin{example} \label{ex:inv-ind-rel-properties}
    As is standard in the literature, let us define, for tuples $\bar{a}$, $\bar{b}$ and sets $C$, $\bar{a} \ind^{\text{i}}_C \bar{b}$ iff $\tp(\bar{a}/C\bar{b})$ has a global extension Lascar-invariant over $C$. Restricting $\ind^{\text{i}}$ to be defined only over models, the definition is equivalent to the existence of a global $M$-invariant extension of $\tp(\bar{a}/M\bar{b})$. In any theory $T$, $\ind^{\text{i}}$ satisfies invariance and monotonicity, among other properties not discussed here.
\end{example}
\begin{notation}
    Given two independence relations $\ind^1$ and $\ind^2$, we write $\ind^1 \implies \ind^2$ if, whenever $A \ind_C^1 B$, we have $A \ind_C^2 B$.
\end{notation}
We also recall some operations on abstract independence relations which will be useful for the later sections. The first one comes, in its explicit form, from Adler's thesis (\cite[Definition 1.16]{adler2005explanation}):
\begin{definition}
    Given an independence relation $\ind$, we define $\ind^*$ by
    \begin{equation*}
        A \ind_C^* B \iff \text{ for all } B' \supseteq B, \text{ there exists } A' \equiv_{BC} A \text{ such that } A' \ind_C B'.
    \end{equation*}
\end{definition}
It is immediate from the definition that:
\begin{itemize}
    \item $\ind^* \implies \ind$.
    \item If $\ind^0 \implies \ind$ and $\ind^0$ satisfies right extension, then $\ind^0 \implies \ind^*$.
\end{itemize}
\begin{fact}[\protect{\cite[Lemma 1.17]{adler2005explanation}}] \label{fact:ind-star-props} If $\ind$ satisfies right monotonicity, then $\ind^*$ satisfies right monotonicity and right extension. If, in addition, $\ind$ satisfies left monotonicity, then so does $\ind^*$. 
\end{fact}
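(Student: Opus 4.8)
The plan is to check each of the three claims directly from the definition of $\ind^{*}$, treating the two monotonicity statements as short unfoldings and isolating right extension as the one substantive point. Right monotonicity of $\ind^{*}$ follows immediately from right monotonicity of $\ind$: suppose $A \ind^{*}_{C} B$ and $B_{0} \subseteq B$, and fix any $E \supseteq B_{0}$. I would apply the defining property of $A \ind^{*}_{C} B$ to the set $E \cup B \supseteq B$ to obtain $A' \equiv_{BC} A$ with $A' \ind_{C} (E \cup B)$; then right monotonicity of $\ind$ gives $A' \ind_{C} E$, while $A' \equiv_{BC} A$ forces $A' \equiv_{B_{0}C} A$ since $B_{0}C \subseteq BC$. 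As $E \supseteq B_{0}$ was arbitrary, this yields $A \ind^{*}_{C} B_{0}$.

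Left monotonicity of $\ind^{*}$, now additionally assuming left monotonicity of $\ind$, is similar but carried out via an automorphism so that I can pass to subtuples. Given $A \ind^{*}_{C} B$ and $A_{0} \subseteq A$, I fix $E \supseteq B$ and choose $\sigma \in \Aut(\M/BC)$ with $A' := \sigma(A) \ind_{C} E$ (possible by the definition of $\ind^{*}$ together with strong homogeneity of $\M$). Setting $A_{0}' := \sigma(A_{0}) \subseteq A'$, I get $A_{0}' \equiv_{BC} A_{0}$ via $\sigma$, and left monotonicity of $\ind$ applied to $A' \ind_{C} E$ gives $A_{0}' \ind_{C} E$; since $E$ was arbitrary, $A_{0} \ind^{*}_{C} B$.

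The real content is right extension: given $A \ind^{*}_{C} B$ and $B \subseteq B'$, I must produce $A^{*} \equiv_{BC} A$ with $A^{*} \ind^{*}_{C} B'$. Unwinding the target, $A^{*} \ind^{*}_{C} B'$ says that for every $D \supseteq B'$ some $\Aut(\M/B'C)$-conjugate of $A^{*}$ is $\ind_{C}$-independent from $D$, equivalently that some realization of $\tp(A^{*}/B'C)$ is independent from $D$. The plan is therefore to find a single complete type $r$ over $B'C$ extending $\tp(A/BC)$ such that, for every $D \supseteq B'$, at least one realization of $r$ is $\ind_{C}$-independent from $D$; then any $A^{*} \models r$ witnesses right extension. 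For each such $D$, the hypothesis $A \ind^{*}_{C} B$ (applied to $D \supseteq B$) already supplies a realization of $\tp(A/BC)$ independent from $D$, and right monotonicity of $\ind$ is exactly what yields the finite intersection property for the associated family of requirements: for $D_{1}, \dots, D_{n} \supseteq B'$ I pass to $D := D_{1} \cup \dots \cup D_{n}$, take one witness $A' \equiv_{BC} A$ with $A' \ind_{C} D$, and restrict it to obtain $A' \ind_{C} D_{i}$ for each $i$. I would then assemble these local witnesses into the desired type $r$ by a compactness argument carried out in a sufficiently saturated model.

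I expect this assembly to be the main obstacle. Because $\ind$ is only assumed $\Aut(\M)$-invariant, the condition ``some realization is $\ind_{C}$-independent from $D$'' need not be closed in the relevant type space, so one cannot simply pass to a limit of the witnessing types; reconciling the finite-intersection data into a single coherent type is precisely where care is needed, and where the right-monotonicity hypothesis must be leveraged — both to secure the finite intersection property above and to reduce the independence demands for all $D$ simultaneously. Making this step precise, rather than the two monotonicity arguments, is the crux of the proof.
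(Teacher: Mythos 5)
Your two monotonicity arguments are correct and are the standard unfoldings of the definition (the paper itself gives no proof of this Fact, citing Adler's thesis, so the comparison is to Adler's argument). The problem is right extension. You have set up the correct target, namely a single type $r \in S(B'C)$ extending $\tp(A/BC)$ such that for every $D \supseteq B'$ some realization of $r$ satisfies $\ind_C$-independence from $D$, and you have correctly verified the finite intersection property of the family $P_D := \{\tp(A'/B'C) : A' \equiv_{BC} A, \ A' \ind_C D\}$ using right monotonicity. But you then leave the assembly step unresolved, and your own worry is exactly right: the sets $P_D$ need not be closed in the type space, and a downward-directed family of nonempty, non-closed subsets of a compact space can perfectly well have empty intersection (think of the tails of $\omega$ inside $\omega$). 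Saturation does not repair this, since the conditions are indexed by all small supersets $D$ of $B'$, of which there are unboundedly many. As written, the proposal does not prove right extension; it reduces it to a step that is false as a general topological principle.

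The missing idea, which is how Adler's proof goes, is to replace the intersection-of-conditions argument by a pigeonhole argument over types, exploiting that there are only boundedly many types over $B'C$. Argue by contradiction: suppose no $A^* \equiv_{BC} A$ satisfies $A^* \ind^*_C B'$. Since the statement $A^* \ind^*_C B'$ depends only on $\tp(A^*/B'C)$, this means that every type $p \in S(B'C)$ extending $\tp(A/BC)$ has a failure witness: some $D_p \supseteq B'$ such that no realization of $p$ is $\ind_C$-independent from $D_p$. There are at most $2^{\size{B'C} + \size{T}}$ such types, so $D^* := \bigcup_p D_p$ is still a small set, and $D^* \supseteq B' \supseteq B$. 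Now apply the hypothesis $A \ind^*_C B$ a single time, to $D^*$: there is $A' \equiv_{BC} A$ with $A' \ind_C D^*$. Put $p' := \tp(A'/B'C)$; this extends $\tp(A/BC)$, so $D_{p'}$ is defined and $D_{p'} \subseteq D^*$, whence right monotonicity of $\ind$ gives $A' \ind_C D_{p'}$. But $A'$ realizes $p'$, contradicting the choice of $D_{p'}$. This is where right monotonicity genuinely does its work, and the closedness issue never arises, because one quantifies over the boundedly many types rather than over the unboundedly many sets $D$.
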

\begin{example}
    The classical example of this operation is that the non-forking independence relation, denoted $\ind^{\textnormal{f}}$, is the result of applying the above operation to the non-dividing independence relation, denoted $\ind^{\textnormal{d}}$. Explicitly: $\ind^{\textnormal{f}} = (\ind^{\textnormal{d}})^*$.
\end{example}
The second operation has been explicitly defined by d'Elbée (\cite[Definition 3.2.9]{delbee2023axiomatic}):
\begin{definition}
    Given an independence relation $\ind$, we define $\ind^{\textnormal{opp}}$ by
    \begin{equation*}
        A \ind_C^{\text{opp}} B \iff B \ind_C A.
    \end{equation*}
\end{definition}
\begin{remark} \label{rem:ind-opp-props}
    It is clear from the definition that, if $\ind$ satisfies left monotonicity/left extension (resp., right monotonicity/right extension), then $\ind^{\text{opp}}$ satisfies right monotonicity/right extension (resp., left monotonicity/left extension).
\end{remark}
\begin{example}
    The coheir independence relation, denoted $\ind^{\textnormal{u}}$, is the result of applying the opp operation to the heir independence relation, denoted $\ind^{\textnormal{h}}$. Explicity: $\ind^{\text{u}} = (\ind^{\text{h}})^{\text{opp}}$.
\end{example}
\subsection{Relative Kim independence}
Given an independence relation $\ind$, we can relativise the notions appearing in the theory of $\NSOP_1$, in the way which can be found in \cite{mutchnik2022conantindependence} and originates in \cite{adler2005explanation}. Let us start with the following notion from the theory of abstract independence relations (cf. \cite[Definition 2.26]{chernikov2012forking}):
\begin{definition}
    Let $T$ be a complete theory, and $\ind$ be an independence relation over subsets of $\M \models T$. We say a global type $q$ is \textbf{$\ind$-free} over $M$ if, for all $B \supset M$ and $\bar{a} \models q|_B$, we have $\bar{a} \ind_M B$.
\end{definition}
\begin{example}
    A global type is $\ind^{\text{i}}$-free over $M$ iff it is $M$-invariant.
\end{example}
\begin{definition}
Let $T$ be a complete theory, $M \models T$, and $\bar{a}$, $\bar{b}$ be tuples.
    \begin{enumerate}[(i)] 
        \item We say that $(\bar{b}_i)_{i \in \omega}$ is an \textbf{$\ind$-Morley sequence} over $M$ if, for all $i \in \omega$, $\bar{b}_i \equiv_M \bar{b}_0$ and $\bar{b}_i \ind_M \bar{b}_{<i}$. For any global type $q$ which is $\ind$-free over $M$, we say $(\bar{b}_i)_{i \in \omega}$ is a \textbf{Morley sequence in $q$ over $M$} if $\bar{b}_i \models q|_{M\bar{b}_{<i}}$ for all $i < \omega$. Every Morley sequence in a global type $\ind$-free over $M$ is $\ind$-Morley over $M$.
        \item We say a formula $\phi(\bar{x}, \bar{b})$ \textbf{$\ind$-Kim-divides} over $M$ if there is some global extension $q \supset \tp(\bar{b}/M)$ $\ind$-free over $M$ and a Morley sequence $(\bar{b}_i)_{i \in \omega}$ in $q$ over $M$ with $\bar{b}_0 = \bar{b}$ such that the set $\{\phi(\bar{x}, \bar{b}_i) : i \in \omega\}$ is inconsistent. 
        \item We say a formula $\phi(x, b)$ \textbf{$\ind$-Kim-forks} over $M$ if there exist formulas $\psi_i(x, c_i)$ for $i < n$ such that $\phi(x, b) \vdash \bigvee_{i < n} \psi_i(x, c_i)$ and each $\psi_i(x, c_i)$ $\ind$-Kim-divides over $M$.
        \item We say $\bar{a}$ is \textbf{$\ind$-Kim-independent} from $\bar{b}$ over $M$ if $\tp(\bar{a}/M\bar{b})$ does not contain any formula that $\ind$-Kim-forks over $M$.
    \end{enumerate}
\end{definition}
\begin{example} 
    We call Morley sequences in global $M$-invariant types \textbf{$M$-invariant Morley sequences}. In this paper, we refer to $\ind^{\textnormal{i}}$-Kim-independence simply as \textbf{Kim-independence}.
\end{example}
Recently, Mutchnik in \cite{mutchnik2022conantindependence} has generalised the fundamental order introduced by Poizat \cite{lascar1979forking} to Kim-dividing, extending the work of Ben Yaacov and Chernikov in \cite{benyaacov2014independence}, who already generalised it to dividing. Our presentation follows that of the fundamental order given by Pillay in \cite{pillay1983stability}.
\begin{definition}
    Let $T$ be a complete theory, $M \models T$, and $r \in S(M)$.
    \begin{enumerate}[(i)]
        \item Let $p$ be a global $M$-invariant extension of $r$. The \textbf{Kim-dividing class of $p$}, denoted $\cl_K(p)$, is defined to be the set of formulas $\phi(\bar{x},\bar{y}) \in \mathcal{L}_M$ such that $\phi(\bar{x}, \bar{a})$ is consistent for some (equiv., any) realisation $\bar{a} \models p|_M$ and there is $(\bar{a}_i)_{i \in \omega}$ with $\bar{a}_i \models p|_{M\bar{a}_{<i}}$ for all $i\in \omega$ and $\{\phi(\bar{x}, \bar{a}_i) : i \in \omega\}$ inconsistent. We may equivalently ask that $\{\phi(\bar{x}, \bar{a}_i) : i \in \omega\}$ is inconsistent for all $(\bar{a}_i)_{i \in \omega}$ as above.
        \item Given global $M$-invariant extensions $p, q$ of $r$, we write $p \leq_K q$ if $\cl_K(p) \subseteq \cl_K(q)$. We say $p$ is \textbf{least in the Kim-dividing order}, or \textbf{$\leq_K$-least}, if $p \leq_K q$ for all global $M$-invariant extensions $q$ of $r$.
    \end{enumerate}
\end{definition}
Mutchnik relativises Kim's lemma to a choice of independence relation:
\begin{definition}
    We say an independence relation $\ind$ defined over models satisfies the \textbf{relative Kim's lemma} if, for all global types $q$ and models $M \models T$, if $q$ is $\ind$-free over $M$, then $q$ is a $\leq_K$-least extension of $q|_M$. 
\end{definition}
\begin{remark} \label{rem:rel-kim-lemma-and-invariance}
    Note that, by the definition of the Kim-dividing order, if $\ind$ satisfies the relative Kim's lemma, then every global type that is $\ind$-free over $M$ must be $M$-invariant.
\end{remark}
\begin{example} \label{ex:kim-lemma-for-nsop1}
    Another way of restating Kaplan and Ramsey's version of Kim's lemma for $\NSOP_1$ theories (cf. \cite[Theorem 3.16]{kaplan2020kimindependence}) is the following (and hence the terminology): a theory $T$ is $\NSOP_1$ iff $\ind^{\text{i}}$ satisfies the relative Kim's lemma.
\end{example}
\begin{remark}
    In the special case where $\ind$ satisfies stationarity over models, some of the previous definitions may be simplified. This is due to the following easy observations:
    \begin{itemize}
        \item If $\ind$ satisfies full existence and stationarity over models, then every type $p \in S(M)$ has a unique global extension that is $\ind$-free over $M$. Such an extension is additionally $M$-invariant.
        \item If $\ind$ satisfies stationarity over models, then every $\ind$-Morley sequence over $M$ is $M$-indiscernible.
    \end{itemize}
    In the context of Mutchnik's work on relative Kim-independence (cf., \cite{mutchnik2022conantindependence}), the relevant independence relation satisfies stationarity over models by assumption, and hence the above remarks apply. As we will see in \S6, we want to use this machinery in the context of a non-stationary independence relation. The observations in this remark will not be used later on; they serve as a point of comparison between these two approaches.
\end{remark}
\section{Basic properties} \label{sec:basic-props}
\subsection{Definition of the \texorpdfstring{$\mathbf{H}_4$}{H4}-free 3-hypertournament} The definitions and results included in this subsection can be found, explicitly or implicitly, in \cite{cherlinunpublished} and \cite{cherlin2021hypertournaments}. Let $\mathcal{L} = \{R\}$, where $R$ is a ternary relation symbol.
\begin{definition}
    Let $T$ be the $\mathcal{L}$-theory axiomatised by the following formulas:
    \begin{enumerate}[(i)]
        \item $\forall x_1 x_2 x_3(R(x_1,x_2,x_3) \to \bigwedge_{i \neq j} x_i \neq x_j)$.
        \item $\forall x_1 x_2 x_3 (R(x_1, x_2, x_3) \leftrightarrow R(x_2, x_3, x_1) \leftrightarrow R(x_3, x_1, x_2))$.
        \item $\forall x_1 x_2 x_3 (\bigwedge_{i \neq j} x_i \neq x_j \to (\neg R(x_1, x_2, x_3) \leftrightarrow R(x_3, x_2, x_1)))$.
    \end{enumerate}
    We call any model $M \models T$ a \textbf{$3$-hypertournament}.
\end{definition}
The general classification of all countable homogeneous $3$-hypertournaments is still open. As a partial answer to this problem, Cherlin's result classifies those homogeneous $3$-hypertournaments that arise from looking at the possible structures that can occur on any subset of four points contained in them.
\begin{remark}
    Let $M$ be a $3$-hypertournament. Given a linear order $\leq$ on $M$, define a $3$-uniform hypergraph $\widehat{M}$ on $M$ in $\mathcal{L}$ by declaring, for $a \leq b \leq c$, $\{a,b,c\} \in R^{\widehat{M}}$ iff $(a,b,c) \in R^M$. Formally, a linear order on $M$ determines an isomorphism of categories between the age of $M$ with embeddings to the category of $3$-uniform hypergraphs on finite subsets of $M$ with embeddings.
\end{remark}
There are three $4$-point substructures of a $3$-hypertournament up to isomorphism:
\begin{itemize}
    \item $\mathbf{C}_4$: there is an order $\leq$ on $C_4$ (the underlying set of four points) such that $\widehat{\mathbf{C}}_4$ is a complete $3$-uniform hypergraph. In different orders, $\widehat{\mathbf{C}}_4$ might have no hyperedges at all, or exactly two hyperedges which intersect at two vertices that are adjacent relative to the ordering on the four points. 
    \item $\mathbf{O}_4$: for any linear order $\leq$ on $O_4$, $\widehat{\mathbf{O}}_4$ has an odd number of hyperedges. 
    \item $\mathbf{H}_4$: for any linear order $\leq$ on $H_4$, $\widehat{\mathbf{H}}_4$ has exactly two hyperedges intersecting in vertices $a < b$ such that there is a unique $c \in H_4$ with $a < c < b$.
\end{itemize}
For any $\mathbf{A} \in \{\mathbf{C}_4, \mathbf{O}_4, \mathbf{H}_4\}$, given a $3$-hypertournament $M$ and a substructure $B \subseteq M$, we say $B$ is an \textbf{$\mathbf{A}$-structure} if it is isomorphic to $\mathbf{A}$, and we call $M$ $\mathbf{A}$\textbf{-free} if it contains no $\mathbf{A}$-structures.
\begin{definition}
    Let $\mathcal{C}$ be a class of finite $3$-hypertournaments. We say $\mathcal{C}$ is \textbf{$4$-constrained} if there is $S \subseteq \{\mathbf{C}_4, \mathbf{O}_4, \mathbf{H}_4\}$ such that a finite $3$-hypertournament $M \in \mathcal{C}$ iff any substructure $B \subseteq M$ with $\size{B} = 4$ is an $\mathbf{A}$-structure for some $\mathbf{A} \in S$.
\end{definition}
Cherlin proves there are exactly four $4$-constrained classes of $3$-hypertournaments that are (strong) amalgamation classes in \cite{cherlinunpublished}. Here, we focus on just one of these classes, namely, that of finite $3$-hypertournaments omitting $\mathbf{H}_4$.
\begin{remark}\label{rem:h4-free-criterion}
    Since we focus on $\mathbf{H}_4$ from now on, let us note that we may also define it as, up to isomorphism, a structure on four points $\{a, b, c, d\}$ such that
    \begin{equation*}
        \models R(a, b, c) \wedge R(a, c, d) \wedge R(a, d, b) \wedge R(b, d, c).
    \end{equation*}
    We often use without mention the following easy observation: if $a, b, c, d \in M$ are elements of a 3-hypertournament such that $M \models R(a,b,c) \wedge R(a,b,d)$, then $\{a,b,c,d\}$ is $\mathbf{H}_4$-free.
\end{remark}
We include the proof of strong amalgamation for the sake of completeness:
\begin{lemma}\label{lem:h4-free-is-strong-amalgamation-class}
    The class of finite $\mathbf{H}_4$-free $3$-hypertournaments is a strong amalgamation class.
\end{lemma}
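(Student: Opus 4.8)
The plan is to verify the defining conditions of a strong amalgamation class, the only substantive one being strong amalgamation itself. Closure under substructures and isomorphisms is built into the definition of the class, and there are only countably many isomorphism classes since $\mathcal{L}$ is finite relational. For the rest, I would reduce strong amalgamation to producing, for any $A, B_1, B_2$ in the class with embeddings of $A$ which we may take to be inclusions with $B_1 \cap B_2 = A$, a single $\mathbf{H}_4$-free $3$-hypertournament $C$ on the set $B_1 \cup B_2$ whose restriction to each $B_i$ is $B_i$. Since axioms (i)--(iii) already fix every triple contained in $B_1$ or in $B_2$, the only freedom, and hence the whole content, lies in orienting the \emph{mixed} triples, i.e. those meeting both $B_1 \setminus A$ and $B_2 \setminus A$. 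Producing such a disjoint $C$ yields strong amalgamation directly, as the inclusions $B_i \hookrightarrow C$ identify only points of $A$.

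To orient the mixed triples I would fix a linear order $\prec$ on $B_1 \cup B_2$ with $B_1 \setminus A \prec A \prec B_2 \setminus A$ (and arbitrary orders inside each block), and declare, in the hypergraph presentation $\widehat{C}$ relative to $\prec$, that no mixed triple is a hyperedge; equivalently, for a mixed triple with $\prec$-increasing enumeration $x \prec y \prec z$ I set $\neg R(x,y,z)$. This assigns each mixed triple exactly one orientation, so $C$ is a well-defined $3$-hypertournament restricting correctly to $B_1$ and $B_2$. In the special case $B_i = A \cup \{b_i\}$ this is exactly the construction making the pair $(b_1,b_2)$ point to every $a \in A$, whose $\mathbf{H}_4$-freeness is immediate from the observation in Remark \ref{rem:h4-free-criterion}.

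The crux is checking that $C$ is $\mathbf{H}_4$-free, for which I would use the description that $\mathbf{H}_4$ has, in \emph{every} linear order, exactly two hyperedges, and that these share a pair of vertices with exactly one vertex strictly between them. Any $4$-set lying inside $B_1$ or inside $B_2$ is already $\mathbf{H}_4$-free by hypothesis, so it suffices to treat $4$-sets meeting both $B_1 \setminus A$ and $B_2 \setminus A$, organised by the distribution of their points among the three blocks $B_1 \setminus A$, $A$, $B_2 \setminus A$. In each such distribution at most two of the four triples are non-mixed, while every mixed triple is a non-edge by construction; hence, relative to $\prec$, the $4$-set has at most two hyperedges. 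The only distribution that can produce two hyperedges is one point $u$ of $B_1 \setminus A$, two points $a_1, a_2$ of $A$, and one point $v$ of $B_2 \setminus A$: there the two non-mixed triples $\{u,a_1,a_2\} \subseteq B_1$ and $\{a_1,a_2,v\} \subseteq B_2$ share the \emph{adjacent} pair $\{a_1,a_2\}$ of $A$-points, so by the $\mathbf{C}_4$-description this $4$-set is a $\mathbf{C}_4$, not an $\mathbf{H}_4$; every other mixed $4$-set has at most one hyperedge and so cannot be $\mathbf{H}_4$ either.

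The main obstacle is precisely this orientation choice: the naive attempt to orient all mixed triples by a single uniform ``pointing'' rule is inconsistent, since a triple with two points in $B_2 \setminus A$ and one with two points in $B_1 \setminus A$ would receive opposite prescribed orientations. Layering the order as $B_1 \setminus A \prec A \prec B_2 \setminus A$ and declaring mixed triples to be non-edges is what resolves this, and it simultaneously forces the only potential two-edge $4$-sets into the harmless $\mathbf{C}_4$ pattern. The remaining points, namely the degenerate cases $A = \emptyset$ or $B_i = A$ and the bookkeeping that non-mixed triples retain their given orientations, are routine.
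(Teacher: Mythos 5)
Your proof is correct, and it rests on exactly the same key construction as the paper's: pass to the hypergraph presentation via a layered linear order with $B_1 \setminus A$ below $A$ below $B_2 \setminus A$, freely amalgamate (no mixed hyperedges), and translate back. The difference is in how the $\mathbf{H}_4$-freeness check is organised. The paper first invokes the standard reduction of amalgamation to one-point extensions $Ab_1 \leftarrow A \rightarrow Ab_2$, after which the only 4-sets to check are $\{a_1,a_2,b_1,b_2\}$ with $a_1,a_2 \in A$; there, free amalgamation gives $R(b_1,b_2,a_1) \wedge R(b_1,b_2,a_2)$, and Remark \ref{rem:h4-free-criterion} finishes in one line. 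You instead amalgamate arbitrary $B_1, B_2$ directly and run a case analysis over the distribution of a mixed 4-set among the three blocks, using the order-theoretic fingerprints of $\mathbf{C}_4$ and $\mathbf{H}_4$ (at most two hyperedges, and in the only two-hyperedge distribution $(1,2,1)$ the shared pair of $A$-points is adjacent in the order, which is the $\mathbf{C}_4$ pattern rather than the $\mathbf{H}_4$ one). Your block count is exhaustive and each case is handled correctly, so the argument goes through. What your route buys is that it never relies on the one-point reduction: the paper only quotes that reduction for AP, whereas strictly speaking one also wants it compatible with strong amalgamation, a point your direct construction on $B_1 \cup B_2$ with $B_1 \cap B_2 = A$ renders moot. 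What it costs is the longer verification; the paper's reduction collapses your entire case analysis to the single $(1,2,1)$ configuration.
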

\begin{proof}
    We may assume that our amalgamation problem is of the form $Ab_1 \leftarrow A \rightarrow Ab_2$ for $b_1, b_2 \notin A$. Let $\leq$ be a linear order on $Ab_1b_2$ such that $b_1 < A < b_2$. This generates an amalgamation problem in the category of finite $3$-uniform hypergraphs, and so we can freely amalgamate to obtain a solution $\widehat{C}$. Applying the inverse functor, we obtain a finite $3$-hypertournament $C$ which is in our class. Indeed, by construction, the only potential $\mathbf{H}_4$-structure must be of the form $\{a_1, a_2, b_1, b_2\}$ with $a_1, a_2 \in A$. Since $\widehat{C}$ is obtained by free amalgamation, no hyperedges involving both $b_1$ and $b_2$ appear in $\widehat{C}$, and so it follows that $C \models R(b_1, b_2, a_1) \wedge R(b_1, b_2, a_2)$, which implies that $\{a_1, a_2, b_1, b_2\}$ is $\mathbf{H}_4$-free. 
\end{proof}
By Fact \ref{fact:fraisse}, the Fraïssé limit of the class in Lemma \ref{lem:h4-free-is-strong-amalgamation-class} exists, and we denote its theory by $T_{\mathbf{H}_4\textnormal{-free}}$. By Facts \ref{fact:fraisse-limit-omega-cat} and \ref{fact:trivial-acl-for-sap}, $T_{\mathbf{H}_4\textnormal{-free}}$ is $\omega$-categorical and has quantifier elimination and trivial algebraic closure. Often, we refer to the countable model of $T_{\mathbf{H}_4\textnormal{-free}}$ as \textit{the} $\mathbf{H}_4$-free $3$-hypertournament.

The three remaining countable homogeneous $3$-hypertournaments are determined by the constrained classes they come from. These are:
\begin{itemize}
    \item The \textbf{generic 3-hypertournament}, which is the Fraïssé limit of the $4$-constrained class determined by $S = \{\mathbf{C}_4, \mathbf{O}_4, \mathbf{H}_4\}$. Using, e.g., disjoint 3-amalgamation, one can show that the generic 3-hypertournament is supersimple with trivial forking (see \cite[Theorem 3.14]{kruckman2019disjoint}). 
    \item The \textbf{even 3-hypertournament}, which is the Fraïssé limit of the $4$-constrained class determined by $S = \{\mathbf{C}_4, \mathbf{H}_4\}$. As before, one can show that the even 3-hypertournament is supersimple with trivial forking. 
    \item The \textbf{cyclic 3-hypertournament}, which is the Fraïssé limit of the $4$-constrained class determined by $S = \{\mathbf{C}_4\}$. One can show that the cyclic 3-hypertournament is bi-interpretable with $(\Q, \text{cyc})$, and hence it is distal and dp-minimal. 
\end{itemize}
The goal of this paper is to provide similar classification results for the $\mathbf{H}_4$-free $3$-hypertournament.
\subsection{Existence of invariant extensions}
In this section, we prove that $T_{\mathbf{H}_4\textnormal{-free}}$ is locally $\ind^\text{i}$-extensible but not $\ind^{\text{i}}$-extensible, properties which we define below. During the course of the proof, some other basic model-theoretic properties of $T_{\mathbf{H}_4\textnormal{-free}}$ are established, such as weak elimination of imaginaries. From now on, let $\M \models T_{\mathbf{H}_4\textnormal{-free}}$ be a monster model.
\begin{definition}[\protect{cf. \cite{chernikov2012forking}}]
    Let $T$ be a complete theory. 
    \begin{enumerate}[(i)]
        \item We say $T$ is \textbf{$\ind^{\textnormal{i}}$-extensible} if $\ind^{\textnormal{i}}$ satisfies existence, i.e., every type $p \in S(A)$ has a global extension $q$ which is Lascar-invariant over $A$.
        \item We say $T$ is \textbf{locally $\ind^{\textnormal{i}}$-extensible} if it is $\ind^{\textnormal{i}}$-extensible after adding finitely many parameters. 
    \end{enumerate}
\end{definition}
\begin{remark}
    For (local) $\ind^\textnormal{i}$-extensibility, since $\ind^{\text{i}}$ satisfies right transitivity and monotonicity (cf. Example \ref{ex:inv-ind-rel-properties}), it suffices to check the condition from the definition for 1-types.  
\end{remark}
\begin{lemma} \label{lem:wei}
    $T_{\mathbf{H}_4\textnormal{-free}}$ has weak elimination of imaginaries, i.e., for every imaginary $e$, there is some real $\bar{c}$ such that $e \in \dcl^\eq(\bar{c})$ and $\bar{c} \in \acl(e)$. 
\end{lemma}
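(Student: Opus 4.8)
The plan is to deduce weak elimination of imaginaries from a single combinatorial \emph{intersection property}: for every imaginary $e$ and all finite real sets $B_1, B_2 \subseteq \M$, if $e \in \dcl^{\eq}(B_1)$ and $e \in \dcl^{\eq}(B_2)$, then $e \in \dcl^{\eq}(B_1 \cap B_2)$. Granting this, fix an imaginary $e$; since $e \in \dcl^{\eq}(\bar a)$ for some finite real tuple $\bar a$, I may choose a finite real set $C$ of minimal cardinality with $e \in \dcl^{\eq}(C)$. The intersection property forces $C$ to be the unique smallest such set: for any finite real $B$ with $e \in \dcl^{\eq}(B)$ we get $e \in \dcl^{\eq}(C \cap B)$, so by minimality $\size{C \cap B} \geq \size{C}$, whence $C \subseteq B$; in particular two minimizers coincide. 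Now $\Aut(\M/e)$ permutes the finite real coding sets of $e$ and preserves their cardinality, so it fixes $C$ setwise; as $C$ is finite, every element of $C$ lies in $\acl^{\eq}(e)$. Together with $e \in \dcl^{\eq}(C)$, this is exactly weak elimination of imaginaries. (Here trivial algebraic closure, from Fact \ref{fact:trivial-acl-for-sap}, makes every real set algebraically closed, so there is nothing further to check.)

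To prove the intersection property, set $B_0 = B_1 \cap B_2$ and fix $\emptyset$-definable functions $\pi, \rho$ with $\pi(\bar b_2) = e = \rho(\bar b_1)$, where $\bar b_i$ enumerates $B_i$. By homogeneity it suffices to show $h(e) = e$ for every $h \in \Aut(\M/B_0)$ (after a routine reduction we may take the private parts pairwise disjoint over $B_0$). The crux is to produce a single tuple $\bar b_2^{*}$, extending $B_0$, that realizes $\qftp(\bar b_2/B_1)$ over $B_1$ \emph{and} $\qftp(h\bar b_2 / h(B_1))$ over $h(B_1)$ simultaneously. Granting such a $\bar b_2^{*}$, quantifier elimination (Fact \ref{fact:fraisse-limit-omega-cat}) gives $f \in \Aut(\M/B_1)$ with $f(\bar b_2) = \bar b_2^{*}$ and $g \in \Aut(\M/h(B_1))$ with $g(h\bar b_2) = \bar b_2^{*}$. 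Since $f$ fixes $B_1$ and $g$ fixes $h(B_1)$ pointwise, applying $\pi$ yields $\pi(\bar b_2^{*}) = f(\pi(\bar b_2)) = f(e) = e$ on the one hand, and $\pi(\bar b_2^{*}) = g(\pi(h\bar b_2)) = g(h(e)) = h(e)$ on the other, so $e = h(e)$, as required.

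The existence of $\bar b_2^{*}$ amounts to a three-part amalgamation over $B_0$ with private parts $P = B_1 \setminus B_0$, $Q = h(B_1) \setminus B_0$, and $R = B_2^{*} \setminus B_0$: the $P$–$Q$ relation is already pinned down (it is the actual type of $B_1 \cup h(B_1)$ inside $\M$), the $P$–$R$ relation is prescribed by $\qftp(B_2/B_1)$, and the $Q$–$R$ relation by $\qftp(h(B_2)/h(B_1))$, these being compatible over $B_0$. The only $3$-subsets left undetermined are the \emph{transversals} meeting $P$, $Q$, and $R$ each in one point. My plan is to orient these transversals by fixing an auxiliary linear order on $P \cup Q \cup R$ and declaring them non-edges of the associated $3$-uniform hypergraph, exactly as in the proof of Lemma \ref{lem:h4-free-is-strong-amalgamation-class}, and then to check that the resulting $3$-hypertournament on $B_0 \cup P \cup Q \cup R$ remains $\mathbf{H}_4$-free.

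This final verification is where I expect the main difficulty to lie, and where the particular shape of $\mathbf{H}_4$ is decisive. Unlike the two-part situation of Lemma \ref{lem:h4-free-is-strong-amalgamation-class}, where every potential $\mathbf{H}_4$ had to use a spanning edge and was killed outright, here a freshly created $\mathbf{H}_4$ could involve two of the three private parts together with a single transversal, so one must run through the $4$-point configurations meeting all three parts and confirm, via the criterion of Remark \ref{rem:h4-free-criterion}, that the chosen orientation never completes an $\mathbf{H}_4$. Crucially, I only need the \emph{existence} of one admissible orientation rather than a canonical or unique one, so this reduces to a finite case analysis and not a stationarity statement — in keeping with the fact, emphasised later, that $T_{\mathbf{H}_4\textnormal{-free}}$ possesses no well-behaved stationary independence relation.
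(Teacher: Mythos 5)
Your reduction of weak elimination of imaginaries to the intersection property is sound (given trivial algebraic closure the two are indeed equivalent, and your minimal-coding-set argument is the standard one), and the derivation of $h(e)=e$ from the existence of $\bar{b}_2^{*}$ is correct. The fatal problem is the linchpin itself: a common realisation $\bar{b}_2^{*}$ of $\tp(\bar{b}_2/B_1)$ and $\tp(h\bar{b}_2/h(B_1))$ does \emph{not} exist in general in $T_{\mathbf{H}_4\textnormal{-free}}$, so no choice of transversal orientations can complete your three-part amalgamation — the difficulty you deferred is not a finite case analysis but a genuine obstruction. The paper's Remark \ref{rem:counterexample-to-claim} is precisely a counterexample to your step: take $e,f$, the pairs $\bar{b}_0 \equiv_{ef} \bar{b}_1$, and the formula $\phi(x,\bar{y})$ from the proof of Proposition \ref{prop:h4-free-is-tp2}, let $a \models \phi(x,\bar{b}_0)$, and set $B_1 = ef\bar{b}_0$, $B_2 = efa$ (so $B_0 = ef$), with $h \in \Aut(\M/ef)$ sending $\bar{b}_0 \mapsto \bar{b}_1$. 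Since $\phi(x,\bar{b}_0)$ isolates a complete type over $ef\bar{b}_0$ and $\phi(x,\bar{b}_1) = h(\phi(x,\bar{b}_0))$ isolates a complete type over $ef\bar{b}_1$, a tuple realising both $\tp(efa/B_1)$ and $\tp(h(efa)/h(B_1))$ is exactly an $a^{*} \models \phi(x,\bar{b}_0) \wedge \phi(x,\bar{b}_1)$ — and that conjunction is inconsistent (this $2$-inconsistency is what drives the $\TP_2$ proof). So your key existence claim is false for tuples, not merely unverified.

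Two further remarks. First, even in instances where some completion does exist, your specific recipe (declare every transversal a non-edge with respect to an auxiliary order) is not automatically safe: with order $b_0 < p < q < r$, if the determined triples make $b_0pq$ and $b_0qr$ edges and $b_0pr$ a non-edge, then declaring $pqr$ a non-edge leaves exactly the two edges $b_0pq$ and $b_0qr$, which meet in $\{b_0,q\}$ with only $p$ between them — an $\mathbf{H}_4$; one is then forced to make $pqr$ an edge, so the orientation must depend on the determined data, unlike the uniform choice in Lemma \ref{lem:h4-free-is-strong-amalgamation-class}. Second, this failure for tuples is exactly why the paper does not work with arbitrary finite sets: it first applies the group-theoretic criterion for weak elimination of imaginaries together with a reduction (using trivial algebraic closure) to the case $B_1 = Ca$, $B_2 = Cb$ with $a,b$ \emph{singletons}, and only then proves the amalgamation statement (its Claim 1), which is true for singletons but, as above, false beyond them. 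Your argument needs that reduction, or some substitute for it, before any amalgamation can be attempted.
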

\begin{proof}
    Let $M \models T_{\mathbf{H}_4\textnormal{-free}}$ be countable. By \cite[Lemma 16.17]{poizat2000modeltheory}, it suffices to show that
    \begin{equation} \label{eq:aut-of-intersection}
        H := \langle \Aut(M/A), \Aut(M/B)\rangle = \Aut(M/A \cap B)
    \end{equation}
    for finite algebraically closed $A, B \subset M$. Since $M$ has trivial algebraic closure, by \cite[Proposition 4.3]{li2018simplicity}, it is enough to prove (\ref{eq:aut-of-intersection}) for $A = Ca$, $B = Cb$ for some finite set $C \subseteq M$ and distinct elements $a, b \in M$. We may assume $a,b \notin C$.
    \begin{nclaim}
        If $b' \equiv_C b$, then there is $a' \models \tp(a/Cb)$ such that $b' \equiv_{Ca'} b$.
    \end{nclaim}
    \begin{proof}[Proof of Claim 1]
        Write $C = \{c_1, \dots, c_n\}$ and $p(x, b, \bar{c}) := \tp(a/b\bar{c})$, and let $b' \models \tp(b/\bar{c})$. First note that, if $b' = b$, then we can pick $a' = a$. Hence, assuming that $b' \neq b$, we need to show that $p(x, b, \bar{c}) \cup \{R(x, b, c_i) \leftrightarrow R(x, b', c_i) : i \in [n]\}$ is consistent. We split the proof of this claim into two cases. In both, our strategy is to build a finite structure that satisfies the formulas in the set. 

        \underline{Case 1}: $b' = a$.

        Let $\Sigma(x,b,a)$ be the extension of $p(x,b,\bar{c})$ given by adding the following formulas:
        \begin{enumerate}[(i)]
            \item $R(x,a,c_i) \leftrightarrow R(a,b,c_i)$ for all $i \in [n]$. 
            \item $R(x,a,b)$.  
        \end{enumerate}
        We claim that $\Sigma$ is consistent modulo $T_{\mathbf{H}_4\text{-free}}$. Since $\Sigma$ extends $p$, it suffices to show that $4$-point sets containing $x$ and $a$ are $\mathbf{H}_4$-free. But this also follows immediately from the fact that $\Sigma$ extends $p$, using Remark \ref{rem:h4-free-criterion}. 

        \underline{Case 2}: $b' \neq a$.
        
        This time, let $\Sigma(x, b, b', a)$ be the extension of $p(x, b, \bar{c})$ given by adding the following formulas:
        \begin{enumerate}[(i)]
            \item $R(x, b', c_i) \leftrightarrow R(a, b, c_i)$ for all $i \in [n]$.
            \item $R(x, a, b) \wedge R(x, a, b')$.
            \item $R(x, a, c_i)$ for all $i \in [n]$.
            \item $R(x, b, b') \leftrightarrow R(a, b, b')$. 
        \end{enumerate}
        As in Case 1, we show that $\Sigma$ is consistent modulo $T_{\mathbf{H}_4\text{-free}}$ by showing that $4$-point sets containing $x$ and at least one of $a$ or $b'$ are $\mathbf{H}_4$-free. By (iii), $\Sigma$ implies that any set of the form $\{x,a,c_i,c_j\}$ is $\mathbf{H}_4$-free, and by (ii) and (iii), so is $\{x,a,b,c_i\}$. Moreover, by (i) and since $b \equiv_C b'$, $\Sigma$ implies that any set of the form $\{x, b', c_i, c_j\}$ or $\{x, b, b', c_i\}$ is $\mathbf{H}_4$-free. Finally, by (ii), $\Sigma$ implies that $\{x, a, b, b'\}$ is also $\mathbf{H}_4$-free, and (ii) and (iii) combined imply that so is $\{x, a, b', c_i\}$. 
        
        Therefore, in either case, $\Sigma$ defines a type over $a, b,b', \bar{c}$, and so we can choose $a' \models \Sigma$ in $M$. By (i) in both cases, this $a'$ works.    \hfill \pushQED{$\qed_{\text{Claim 1}}$}
    \end{proof}
    We now show (\ref{eq:aut-of-intersection}). Note that $\leq$ is clear. For $\geq$, let $g \in \Aut(M/C)$, and let $b' := g(b)$. Then, by Claim 1, there is $a' \equiv_{B} a$ such that $b \equiv_{Ca'} b'$. 
    \begin{nclaim}
        $\Aut(M/Ca') \leq H$.
    \end{nclaim}
    \begin{proof}[Proof of Claim 2]
        Since $a' \equiv_{B} a$, there is some $h \in \Aut(M/B)$ sending $a'$ to $a$. Hence, for any $k \in \Aut(M/Ca')$, we have that $hkh^{-1} \in \Aut(M/A) \leq H$. As $h \in \Aut(M/B) \leq H$, it follows that $k \in H$.\hfill \pushQED{$\qed_{\text{Claim 2}}$}
    \end{proof}
    Thus, since $b \equiv_{Ca'} b'$, there is some $k \in \Aut(M/Ca')$ such that $k(b) = b'$, that is, $k(b) = g(b)$. Therefore, $k^{-1}g \in H$, and so, since $k\in H$ by Claim 2, $g \in H$. 
\end{proof}
\begin{remark}
    Claim 1 in the previous proof is a strong property for which we need to restrict ourselves to both $a$ and $b$ being singletons. See Remark \ref{rem:counterexample-to-claim} for a counterexample to the version of the Claim obtained by replacing $b$ with a pair of elements.
\end{remark}
\begin{corollary}\label{cor:wei-acleq-is-acl}
    For all $\bar{a}, \bar{b}$ and $A$, $\bar{a} \equiv_{\acl^\eq(A)} \bar{b}$ iff $\bar{a} \equiv_{\acl(A)} \bar{b}$.
\end{corollary}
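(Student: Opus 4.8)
The plan is to derive the corollary from weak elimination of imaginaries (Lemma~\ref{lem:wei}) through the standard identity $\acl^{\eq}(A) = \dcl^{\eq}(\acl(A))$, valid for real parameter sets $A$. One implication is immediate: since any real element algebraic over $A$ in the home sort is \emph{a fortiori} algebraic over $A$ in $\M^{\eq}$, we have $\acl(A) \subseteq \acl^{\eq}(A)$, and so two tuples with the same type over $\acl^{\eq}(A)$ certainly have the same type over the smaller set $\acl(A)$. All of the content is in the converse.

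For the converse, the crux is to show $\acl^{\eq}(A) \subseteq \dcl^{\eq}(\acl(A))$. Here I would take any $e \in \acl^{\eq}(A)$ and apply Lemma~\ref{lem:wei} to produce a real tuple $\bar{c}$ with $e \in \dcl^{\eq}(\bar{c})$ and $\bar{c} \in \acl(e)$. By transitivity of algebraic closure, $\bar{c} \in \acl(e) \subseteq \acl^{\eq}(A)$; since $\bar{c}$ is real and, in any theory, $\acl^{\eq}(A) \cap \M = \acl(A)$ (a real element with finite orbit over $A$ in $\M^{\eq}$ already has finite orbit over $A$ in $\M$), we conclude $\bar{c} \in \acl(A)$ and hence $e \in \dcl^{\eq}(\bar{c}) \subseteq \dcl^{\eq}(\acl(A))$. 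This gives $\acl^{\eq}(A) = \dcl^{\eq}(\acl(A))$, the reverse inclusion being automatic from $\acl(A) \subseteq \acl^{\eq}(A)$ and closure under $\dcl^{\eq}$.

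To finish, I would pass from types to automorphism orbits, which is legitimate because $\M$ is a monster model and the parameter sets involved are small. Every automorphism of $\M$ extends uniquely to one of $\M^{\eq}$, and an automorphism fixes $\dcl^{\eq}(\acl(A))$ pointwise precisely when it fixes $\acl(A)$ pointwise, since elements of a definable closure are determined by the parameters defining them. Therefore the pointwise stabilisers of $\acl^{\eq}(A)$ and of $\acl(A)$ coincide, so $\bar{a}$ and $\bar{b}$ lie in the same orbit of one iff they lie in the same orbit of the other; that is, $\bar{a} \equiv_{\acl^{\eq}(A)} \bar{b}$ iff $\bar{a} \equiv_{\acl(A)} \bar{b}$.

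I expect the only genuine step to be the inclusion $\acl^{\eq}(A) \subseteq \dcl^{\eq}(\acl(A))$, which is exactly where weak elimination of imaginaries enters; the remaining reductions are routine manipulations with the monster model and the passage to imaginaries, and do not use any special feature of the $\mathbf{H}_4$-free $3$-hypertournament beyond Lemma~\ref{lem:wei}.
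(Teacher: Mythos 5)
Your proof is correct, and it rests on exactly the same input as the paper's: Lemma \ref{lem:wei}. The difference is one of packaging. The paper disposes of the corollary in a single line by citing \cite[Proposition 3.4]{casanovas2004wei}, a general transfer principle valid in any theory with weak elimination of imaginaries, whereas you prove that principle from scratch. Your two steps --- first the identity $\acl^\eq(A) = \dcl^\eq(\acl(A))$, extracted from Lemma \ref{lem:wei} via transitivity of $\acl^\eq$ together with the standard fact $\acl^\eq(A) \cap \M = \acl(A)$, and second the observation that $\dcl^\eq(\acl(A))$ and $\acl(A)$ have the same pointwise stabiliser in $\Aut(\M^\eq)$, so that orbit equivalence (equivalently type equality, by smallness of $\acl^\eq(A)$ and strong homogeneity of the monster) over the two sets coincides --- are precisely the content of the cited result. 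What the citation buys is brevity; what your argument buys is self-containment and the explicit confirmation that nothing about $T_{\mathbf{H}_4\textnormal{-free}}$ is used beyond weak elimination of imaginaries, which is also the level of generality at which the cited proposition operates.
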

\begin{proof}
    This follows directly from \cite[Proposition 3.4]{casanovas2004wei}.
\end{proof}
\begin{proposition} \label{prop:not-i-extensible}
    $T_{\mathbf{H}_4\textnormal{-free}}$ is not $\ind^\textnormal{i}$-extensible.
\end{proposition}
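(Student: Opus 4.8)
The plan is to exhibit a single type with no Lascar-invariant global extension. By the remark reducing $\ind^{\textnormal{i}}$-extensibility to $1$-types it suffices to treat $1$-types, and by homogeneity there is a unique $1$-type $p(x)\in S(\emptyset)$; so it is enough to show that $p$ admits no global extension that is Lascar-invariant over $\emptyset$. Suppose towards a contradiction that $q\supseteq p$ is such an extension. For any two distinct $b,c\in\M$, axioms (ii) and (iii) give $\neg R(x,b,c)\leftrightarrow R(x,c,b)$, so exactly one of $R(x,b,c)$, $R(x,c,b)$ belongs to $q$. On the other hand, since the only relation symbol is the ternary $R$ and two points carry no $R$-instances, we have $(b,c)\equiv_\emptyset(c,b)$ by quantifier elimination (Fact \ref{fact:fraisse-limit-omega-cat}). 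This already shows $q$ cannot be invariant over $\emptyset$, and the whole point is to upgrade this to a failure of mere Lascar-invariance.

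The heart of the argument is therefore the claim that, for distinct $b,c$, the ordered pairs $(b,c)$ and $(c,b)$ have the same Lascar strong type over $\emptyset$. Granting this, Lascar-invariance of $q$ forces $R(x,b,c)\in q\iff R(x,c,b)\in q$, contradicting the previous paragraph; hence no such $q$ exists and $T_{\mathbf{H}_4\textnormal{-free}}$ is not $\ind^{\textnormal{i}}$-extensible. To prove the claim I would connect $(b,c)$ to $(c,b)$ by a short chain of $\emptyset$-indiscernible sequences. A single sequence cannot work: if $(b,c)$ and $(c,b)$ were two terms of one $\emptyset$-indiscernible sequence of pairs, then two of its terms would share \emph{both} of their entries, whereas generic terms are disjoint, so the two-term type would not be constant. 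Thus I would use a bridge. First choose a pair $(p,q)$ of fresh elements so that $\{b,c,p,q\}$ realises the ``all-increasing'' $\mathbf{C}_4$-configuration $R(b,c,p)\wedge R(b,c,q)\wedge R(b,p,q)\wedge R(c,p,q)$, which is $\mathbf{H}_4$-free and exists by richness of $\M$. Then I would produce two $\emptyset$-indiscernible sequences of pairs: one, $I$, having $(b,c),(p,q)$ as its first two terms, and another, $J$, having $(p,q),(c,b)$ as its first two terms. Since any two entries of an $\emptyset$-indiscernible sequence have the same Lascar strong type over $\emptyset$, $I$ gives this relation to $(b,c)$ and $(p,q)$, $J$ gives it to $(p,q)$ and $(c,b)$, and transitivity yields the claim.

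The main obstacle is the existence of $I$ and $J$, i.e. showing that the two-term types of $\big((b,c),(p,q)\big)$ and of $\big((p,q),(c,b)\big)$ over $\emptyset$ each tile into a genuine $\emptyset$-indiscernible sequence while remaining $\mathbf{H}_4$-free. Here I would exploit strong amalgamation (Lemma \ref{lem:h4-free-is-strong-amalgamation-class}): in any tiling by disjoint pairs, the triples lying inside two of the pairs are forced, and form copies of the chosen $\mathbf{C}_4$-configuration, hence are $\mathbf{H}_4$-free; whereas the transversal triples meeting three distinct pairs are unconstrained. I would orient the transversal triples by a single shift-invariant rule, which guarantees indiscernibility, and check by means of Remark \ref{rem:h4-free-criterion} that the rule can be chosen so that every $4$-point subset meeting three or four of the pairs avoids $\mathbf{H}_4$; this is a finite verification, and existence of $I,J$ with the prescribed initial terms then follows from tileability together with the saturation of $\M$. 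The delicate point, and the reason the naive one-sequence connection fails, is precisely that the antisymmetry of $R$ makes a pair and its reverse rigid, so the reversal must be routed through the bridge $(p,q)$ using this transversal freedom.
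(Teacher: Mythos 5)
Your proposal is correct, but it reaches the key fact by a genuinely different route than the paper. Both arguments end identically: once $(b,c)$ and $(c,b)$ have the same Lascar strong type over $\varnothing$, Lascar-invariance forces $R(x,b,c)\leftrightarrow R(x,c,b)$ into the putative global type, contradicting antisymmetry. The paper obtains the Lascar-equivalence abstractly and in greater generality ($ab \equiv^{\textnormal{Ls}} cd$ for \emph{all} pairs of distinct points): $ab \equiv cd$ by quantifier elimination, trivial algebraic closure plus weak elimination of imaginaries (Corollary \ref{cor:wei-acleq-is-acl}) give $ab \equiv_{\acl^\eq(\varnothing)} cd$, and $\omega$-categoricity then upgrades equality of types over $\acl^\eq(\varnothing)$ to equality of Lascar strong types. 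You instead prove only the needed instance $(b,c) \equiv^{\textnormal{Ls}} (c,b)$ directly from the definition of Lascar strong type, via a two-step chain of $\varnothing$-indiscernible sequences through a bridge pair; your observation that no single indiscernible sequence can contain both a pair and its reversal (so the Lascar distance is exactly $2$) is correct and explains why the bridge is unavoidable. Your deferred ``finite verification'' does go through: with the bridge $R(b,c,p)\wedge R(b,c,q)\wedge R(b,p,q)\wedge R(c,p,q)$, orienting every transversal triple increasingly works for both $I$ and $J$ --- every $4$-set containing a full pair has a concordantly oriented edge (for $I$ the pair edge itself; for $J$, in the pair-in-middle case, the edge joining the lower transversal element to the second coordinate of the pair), and transversal $4$-sets are all-increasing copies of $\mathbf{C}_4$ --- so Remark \ref{rem:h4-free-criterion} applies throughout, and saturation plus strong homogeneity let you prescribe the first two terms. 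The trade-off: the paper's proof is shorter given machinery it develops anyway (Lemma \ref{lem:wei}) together with the standard but nontrivial fact that Lascar strong types coincide with types over $\acl^\eq(\varnothing)$ in $\omega$-categorical theories, whereas yours is self-contained and elementary, needing only quantifier elimination and constructions in the style of \S\ref{sec:first-proof-of-nsop4}. One detail glossed in both arguments: the putative invariant type must be non-realised for exactly one of $R(x,b,c)$, $R(x,c,b)$ to lie in it; this is harmless, since a realised global type is never Lascar-invariant over $\varnothing$ (or one simply picks $b,c$ avoiding the realisation).
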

\begin{proof}
    Note that, for any two pairs of distinct points $ab$ and $cd$, we have $ab \equiv cd$ in $T_{\mathbf{H}_4\textnormal{-free}}$. Since $T_{\mathbf{H}_4\textnormal{-free}}$ has trivial algebraic closure, we get $ab \equiv_{\acl(\varnothing)} cd$, and so, by Corollary \ref{cor:wei-acleq-is-acl}, it follows that $ab \equiv_{\acl^\eq(\varnothing)} cd$. Finally, since $T_{\mathbf{H}_4\text{-free}}$ is $\omega$-categorical, it follows that $ab \equiv^{\textnormal{Ls}} cd$. 
    
    Now assume, for contradiction, that there is some global $1$-type $p$ which is Lascar-invariant over $\varnothing$. Then, for any $a \neq b$, $ab \equiv^{\textnormal{Ls}} ba$, and thus, $R(x, a, b) \leftrightarrow R(x, b, a) \in p(x)$. Hence, $p$ is inconsistent modulo $T_{\mathbf{H}_4\textnormal{-free}}$, a contradiction. 
\end{proof}
\begin{remark}
    It follows that there cannot exist any canonical independence relation on the finite subsets of the countable homogeneous $\mathbf{H}_4$-free $3$-hypertournament in the sense of Kaplan and Simon (cf. \cite{kaplan2019automorphism}), since they all satisfy stationarity over $\varnothing$, and thus give rise to global invariant types. This contrasts with the situation for the countable homogeneous tournaments (see \cite[Example 4.4]{kaplan2019automorphism}).
\end{remark}
\begin{proposition}
    $T_{\mathbf{H}_4\textnormal{-free}}$ is locally $\ind^{\textnormal{i}}$-extensible.
\end{proposition}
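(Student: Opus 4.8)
The plan is to prove local $\ind^{\textnormal{i}}$-extensibility by naming a single point. Fix a point $* \in \M$ and work in the expansion $(\M, *)$; by the preceding remark it suffices to extend an arbitrary $1$-type $p(x) \in S(A)$, where $A \ni *$ is finite, to a global type $q$ invariant under $\Aut(\M/A)$ (this is a fortiori Lascar-invariant over $A$, which is all that $\ind^{\textnormal{i}}$-extensibility requires). Two preliminary reductions guide the construction. First, since $T_{\mathbf{H}_4\textnormal{-free}}$ has quantifier elimination (Fact \ref{fact:fraisse-limit-omega-cat}) and $\M$ is universal for finite $\mathbf{H}_4$-free structures, prescribing an orientation of $R$ for every pair involving $x$ determines a consistent complete type over $\M$ exactly when the resulting $3$-hypertournament structure on $\M \cup \{x\}$ is $\mathbf{H}_4$-free; as $\mathbf{H}_4$-freeness is a condition on $4$-element subsets and $\M$ itself is $\mathbf{H}_4$-free, only the $4$-sets containing $x$ need checking. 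Second, for $q$ to be $A$-invariant it is enough that the prescribed value of $R(x,u,v)$ depend only on $\tp(uv/A)$. Note that naming $*$ already removes the obstruction of Proposition \ref{prop:not-i-extensible}: no $\sigma \in \Aut(\M/A)$ can send $(u,v)$ to $(v,u)$, since $\sigma$ fixes $*$ while reversing a pair flips the truth value of $R(u,v,*)$.

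I would then define $q$ by the following $A$-invariant rule on pairs $\{u,v\}$ of distinct elements of $\M\setminus\{x\}$, according to how they meet $A$: if $u,v \notin A$, put $R(x,u,v)$ iff $R(*,u,v)$ (``copy $*$''); if $u,v \in A$, let $p$ decide $R(x,u,v)$; and if exactly one of them, say $c$, lies outside $A$ while $d \in A$, declare $R(x,c,d)$ to hold. Each clause depends only on $\tp(uv/A)$ (for the middle clause because $A$ is fixed pointwise, for the last because it is constant on mixed pairs), so $q$ is $\Aut(\M/A)$-invariant, and by construction $q|_A = p$.

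The heart of the proof is then the $\mathbf{H}_4$-freeness check, which I would organise by the number $k$ of vertices of a $4$-set $\{x, y_1, y_2, y_3\}$ lying in $A$. For $k = 0$ the map $x \mapsto *$ is an isomorphism onto $\{*, y_1, y_2, y_3\} \subseteq \M$, which is $\mathbf{H}_4$-free; for $k = 3$ the set is isomorphic to $\{a^*, y_1, y_2, y_3\} \subseteq \M$ for any realisation $a^* \models p$, again $\mathbf{H}_4$-free. The main obstacle is the mixed cases $k = 1, 2$, and this is exactly where a naive ``copy $*$ everywhere'' rule fails: if $p$ disagrees with $*$ on an internal pair $\{d,d'\}$, copying $*$ on the mixed pairs can flip a $4$-set of type $\mathbf{O}_4$ into an $\mathbf{H}_4$. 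The mixed clause is designed to circumvent this by producing a pivot for the easy observation of Remark \ref{rem:h4-free-criterion}: when $k = 2$, writing $c$ for the external vertex and $d, d'$ for the two in $A$, we have $R(x,c,d) \wedge R(x,c,d')$, so $\{x,c,d,d'\}$ is $\mathbf{H}_4$-free outright; when $k = 1$, writing $d$ for the vertex in $A$ and $u,v$ for the external ones, cyclically rotating the mixed facts (via axiom (ii)) to $R(d,x,u) \wedge R(d,x,v)$ again exhibits a pivot and forces $\mathbf{H}_4$-freeness, irrespective of how $R(x,u,v)$ was set. Verifying that these clauses are mutually consistent and that the rotations land correctly is the only genuinely delicate point; everything else is bookkeeping.
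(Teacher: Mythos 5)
Your proposal is correct and is essentially the paper's own proof: the paper likewise fixes a tuple $\bar{a}^* \in A$, copies it on pairs lying outside $A$, imposes a constant orientation on mixed pairs (the opposite orientation to yours, namely $R(x,a,b)$ for $a \in A$, $b \notin A$, which works by the same pivot argument of Remark \ref{rem:h4-free-criterion}), lets $p$ decide pairs inside $A$, and then deduces Lascar-invariance from $A$-invariance just as you do. The one point to repair is your restriction to finite $A$: the remark you cite only reduces to $1$-types, and $\ind^{\textnormal{i}}$-extensibility after naming $*$ quantifies over arbitrary small base sets containing $*$; fortunately your construction and $\mathbf{H}_4$-freeness check nowhere use finiteness of $A$, so the word ``finite'' can simply be deleted.
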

\begin{proof}
    Let $p(\bar{x}) \in S(A)$ with $A \neq \varnothing$. Fix $\bar{a}^* \in A$. Let $q(\bar{x})$ be the global expansion of $p(\bar{x})$ obtained by adding the following formulas:
    \begin{enumerate}[(i)]
        \item $R(x_i, a, b)$ for all $a\in A$, $b\notin A$, and $i$.
        \item $R(x_i, b_1, b_2) \leftrightarrow R(a^*_i, b_1, b_2)$ for all $b_1, b_2 \notin A$ and $i$.
    \end{enumerate}
    We claim that this is consistent. Indeed, since $\Sigma$ extends $p$, any potential $\mathbf{H}_4$-structure must contain elements outside of $A$, and by (i), $q$ implies that any set of the form $\{x_i, a, a', b\}$, $\{x_i, a, b, b'\}$, or $\{x_i, x_j, a, b\}$ with $a, a' \in A$ and $b, b' \notin A$ is $\mathbf{H}_4$-free. Also, by (ii), $q$ implies that $4$-point sets of the form $\{x_i, b, b', b''\}$, $\{x_i, x_j, b, b'\}$, or $\{x_i, x_j, x_k, b\}$ with $b, b', b'' \notin A$ are $\mathbf{H}_4$-free. Therefore, $q$ is a global type.

    Furthermore, suppose that $b_1b_2 \equiv_A b'_1b'_2$. Since $\bar{a}^* \in A$, this means that $\models R(a^*_i, b_1, b_2) \leftrightarrow R(a^*_i, b'_1, b'_2)$ for all $i$, and thus, by (ii), $R(x_i, b_1, b_2) \leftrightarrow R(x_i, b'_1, b'_2) \in q$. Similarly, $\models R(a^*_i, a^*_j, b_1) \leftrightarrow R(a_i^*, a_j^*, b'_1)$ for all $i,j$, which implies by (ii) that $R(x_i, x_j, b_1) \leftrightarrow R(x_i, x_j, b'_1) \in q$. Therefore, $q$ is a global $A$-invariant extension of $p$, and so, by the argument from Proposition \ref{prop:not-i-extensible}, $q$ is Lascar-invariant over $A$.
\end{proof}
\begin{corollary} \label{cor:left-extension}
    In $T_{\mathbf{H}_4\textnormal{-free}}$, $\ind^\textnormal{i}$ satisfies left extension over non-empty bases.
\end{corollary}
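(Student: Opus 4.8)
The plan is to deduce left extension from the $\ind^{\textnormal{i}}$-extensibility just established, using a base-change composition of invariant types. First I would record a simplification special to $T_{\mathbf{H}_4\textnormal{-free}}$: over any set, Lascar strong type and type coincide. This is exactly the chain $\equiv_A \Rightarrow \equiv_{\acl(A)} \Rightarrow \equiv_{\acl^{\eq}(A)} \Rightarrow \equiv^{\textnormal{Ls}}_A$ used in Proposition \ref{prop:not-i-extensible}, combining trivial algebraic closure, Corollary \ref{cor:wei-acleq-is-acl}, and $\omega$-categoricity (the reverse implication being automatic). Hence ``Lascar-invariant over $A$'' means simply invariant under $\Aut(\M/A)$, and $\ind^{\textnormal{i}}$-freeness over $A$ is ordinary $A$-invariance; this removes all $\textnormal{Autf}$-subtleties from the argument below.

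Next I would reformulate the goal. Writing $A' = AE$ with $E = A' \setminus A$, it suffices to show: if $A \ind^{\textnormal{i}}_C B$ with $C \neq \varnothing$, then there is $E' \equiv_{AC} E$ with $AE' \ind^{\textnormal{i}}_C B$. Indeed, given such $E'$, choose $\rho \in \Aut(\M/AC)$ with $\rho(E') = E$; then $\rho$ fixes $A$, so $\rho(AE') = A'$, and invariance of $\ind^{\textnormal{i}}$ gives $A' \ind^{\textnormal{i}}_C \rho(B)$ with $B' := \rho(B) \equiv_{AC} B$, as required by the definition of left extension.

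To produce $E'$ I extract two invariant types. From the hypothesis $A \ind^{\textnormal{i}}_C B$ I get a global $C$-invariant $q_1 \supseteq \tp(A/CB)$. For the second, note that the previous proposition gives existence for $\ind^{\textnormal{i}}$ over every non-empty base, and for $\ind^{\textnormal{i}}$ existence upgrades to full existence (extend $\tp(E/AC)$ to a global $AC$-invariant type and realise it over $ACB$). Since $C \neq \varnothing$ forces $AC \neq \varnothing$, full existence over $AC$ yields $E' \equiv_{AC} E$ with $E' \ind^{\textnormal{i}}_{AC} B$, witnessed by a global $AC$-invariant $q_2 \supseteq \tp(E'/ACB)$.

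The hard part will be the final composition step: from $A \ind^{\textnormal{i}}_C B$ and $E' \ind^{\textnormal{i}}_{AC} B$ I must conclude $AE' \ind^{\textnormal{i}}_C B$, i.e. a genuine ``left transitivity'' across the bases $C$ and $AC$. I would build the $C$-invariant extension of $\tp(AE'/CB)$ as the composite $q_1 \otimes q_2$ along the base: for $\bar{a} \equiv_C A$ pick $\sigma \in \Aut(\M/C)$ with $\sigma(A) = \bar{a}$ and set $q_2^{\bar{a}} := \sigma(q_2)$, then let a realisation of the composite be $\bar{a} \models q_1$ followed by $\bar{e} \models q_2^{\bar{a}}$. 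The family $(q_2^{\bar{a}})$ is well-defined precisely because $q_2$ is $AC$-invariant: if $\sigma, \sigma' \in \Aut(\M/C)$ agree on $A$, then $\sigma'^{-1}\sigma \in \Aut(\M/AC)$ fixes $q_2$, so $\sigma(q_2) = \sigma'(q_2)$ (this is where the coincidence of invariance and Lascar-invariance from the first paragraph is used). One then checks that the composite is a global $C$-invariant type and that its restriction to $CB$ is $\tp(AE'/CB)$, by realising $\bar{a} \models q_1|_{CB}$, pulling back to $A$ via an automorphism fixing $CB$, and invoking $q_2|_{ACB} = \tp(E'/ACB)$. This yields $AE' \ind^{\textnormal{i}}_C B$, and the reformulation in the second paragraph finishes the proof; note the hypothesis $C \neq \varnothing$ is used exactly once, to invoke full existence, which is why the statement is restricted to non-empty bases.
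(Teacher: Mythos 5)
Your argument is, in effect, a from-scratch reproof of the general lemma that the paper invokes: the paper's entire proof of Corollary \ref{cor:left-extension} is a one-line citation of \cite[Lemma 3.23]{chernikov2012forking}, which deduces left extension for $\ind^{\textnormal{i}}$ from the extensibility established in the preceding proposition. Your skeleton — reduce to finding $E' \equiv_{AC} E$ with $AE' \ind^{\textnormal{i}}_C B$, get $q_1$ from the hypothesis and $q_2$ from full existence over the non-empty base $AC$, and glue them by a base-change composite — is the right shape for such a proof, and the automorphism trick and the verification that the composite restricts to $\tp(AE'/CB)$ are correct.

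There is, however, a genuine gap in your first paragraph, and the composition step leans on it. The coincidence $\equiv_A \Rightarrow \equiv^{\textnormal{Ls}}_A$ is justified by the chain from Proposition \ref{prop:not-i-extensible}, whose last step (types over $\acl^{\eq}(A)$ coincide with Lascar strong types over $A$ in $\omega$-categorical theories) is a theorem about \emph{finite} base sets: naming finitely many constants preserves $\omega$-categoricity, naming infinitely many does not. Your corollary is stated for arbitrary non-empty $C$, and the paper later needs it precisely over models (Remark \ref{rem:h4-conant-fork-eq-conant-div}), which are infinite, so this step cannot be waved through. Note also that the place where the coincidence really bites is not the well-definedness of $q_2^{\bar{a}}$ — the proof of the extensibility proposition constructs a genuinely $AC$-invariant $q_2$, which you may simply use — but $q_1$: the hypothesis $A \ind^{\textnormal{i}}_C B$ only hands you a type Lascar-invariant over $C$, and your composite is $C$-invariant only if $q_1$ is, which over infinite $C$ is exactly the unproved upgrade. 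Two fixes, both within the paper's toolkit: (a) prove the coincidence over every non-empty $A$ directly, by noting that if $\bar{b} \equiv_A \bar{b}'$ and $q$ is a global genuinely $A$-invariant extension of $\tp(\bar{b}/A)$ (which the proposition's proof supplies), then for a Morley sequence $(\bar{c}_i)_{i<\omega}$ in $q$ over $A\bar{b}\bar{b}'$ both $\bar{b}, \bar{c}_0, \bar{c}_1, \dots$ and $\bar{b}', \bar{c}_0, \bar{c}_1, \dots$ are $A$-indiscernible, whence $\bar{b} \equiv^{\textnormal{Ls}}_A \bar{b}'$; or (b) drop the coincidence and check that the composite is merely Lascar-invariant over $C$, which is all $\ind^{\textnormal{i}}$ asks: if $\bar{d} \equiv^{\textnormal{Ls}}_C \bar{d}'$ and $\bar{a} \models q_1|_{C\bar{d}\bar{d}'}$, Lascar-invariance of $q_1$ gives $\bar{d} \equiv_{C\bar{a}} \bar{d}'$, and then $AC$-invariance of $q_2$ gives $\bar{d} \equiv_{C\bar{a}\bar{e}} \bar{d}'$ for $\bar{e} \models q_2^{\bar{a}}|_{C\bar{a}\bar{d}\bar{d}'}$, which is exactly the invariance the composite needs.
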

\begin{proof}
    This is an immediate application of \cite[Lemma 3.23]{chernikov2012forking}.
\end{proof}
\section{First proof of \texorpdfstring{$\NSOP_4$}{NSOP4}} \label{sec:first-proof-of-nsop4}
In this section, we prove the main classification results for $T_{\mathbf{H}_4\textnormal{-free}}$ using concrete constructions instead of the more abstract machinery that will be used in the following sections. In what follows, we work in a monster model $\M \models T_{\mathbf{H}_4\textnormal{-free}}$, and as usual, assume that all sets and tuples are in $\M$ and all models elementarily embed in $\M$.
\begin{proposition}
    $T_{\mathbf{H}_4\textnormal{-free}}$ is $\NFOP_3$ and $\IP_2$. 
\end{proposition}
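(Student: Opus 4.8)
The plan is to prove the two assertions by completely different methods: $\IP_2$ is an explicit construction that exploits the ternary relation, whereas $\NFOP_3$ is an upper bound that holds for \emph{any} theory with quantifier elimination in a ternary language, so it is really a feature of the arity rather than of $\mathbf{H}_4$-freeness.

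For $\IP_2$ I would take $\phi(x_0, x_1, y) := R(x_0, x_1, y)$ and look for single elements $(a_{0,i})_{i\in\omega}$ and $(a_{1,j})_{j\in\omega}$ (``rows'' and ``columns'') together with, for each $I \subseteq \omega^2$, a single element $b_I$ such that $\models R(a_{0,i}, a_{1,j}, b_I)$ if and only if $(i,j) \in I$. By saturation of $\M$ together with Facts \ref{fact:fraisse} and \ref{fact:fraisse-limit-omega-cat}, it suffices to realise, for every finite set of rows, columns and indices, the corresponding finite partial configuration inside a finite $\mathbf{H}_4$-free $3$-hypertournament; a compactness argument then assembles the full system in $\M$. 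Thus the whole problem reduces to one finite combinatorial construction.

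For that construction the idea is to declare the intended encoding triples $\{a_{0,i}, a_{1,j}, b_I\}$ with $(i,j)\in I$ (read in the order row $<$ column $< b_I$) to be the \emph{only} positively $R$-oriented triples, orienting every other triple in the opposite cyclic direction. I would then verify $\mathbf{H}_4$-freeness by inspecting each $4$-subset according to how many of the $b$'s it contains. The key simplification is Remark \ref{rem:h4-free-criterion}: whenever a $4$-set carries exactly two positive triples, those two must share a pair of the form (row, $b$), (column, $b$), or (row, column), and in each of these three cases the shared pair $R$-dominates the two remaining points, so the set is $\mathbf{H}_4$-free; every other $4$-set carries at most one positive triple and hence cannot be an $\mathbf{H}_4$, since $\mathbf{H}_4$ has exactly two edges in every ordering. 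The main obstacle in this half is the bookkeeping: one must check that no unintended coincidence of positive triples creates a genuine $\mathbf{H}_4$, and isolating the ``dominating pair'' criterion is exactly what makes this manageable.

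For $\NFOP_3$, suppose toward a contradiction that $\phi(\bar x_0, \bar x_1, \bar x_2, \bar x_3)$ witnesses $\FOP_3$; by quantifier elimination (Fact \ref{fact:fraisse-limit-omega-cat}) we may assume $\phi$ is a Boolean combination of atomic formulas. Since $R$ is ternary, every atom involves variables from at most three of the four blocks, so upon substituting $\bar a_f, \bar b_{1,i_1}, \bar b_{2,i_2}, \bar b_{3,i_3}$ the truth value of $\phi$ becomes a fixed Boolean function
\[
    \psi(f, i_1, i_2, i_3) = G\big(\theta_0(i_1,i_2,i_3),\, \theta_1(f,i_2,i_3),\, \theta_2(f,i_1,i_3),\, \theta_3(f,i_1,i_2)\big),
\]
where each $\theta_t$ collects the (boundedly many) atom-values whose block is disjoint from block $t$, and therefore does not depend on the $t$-th coordinate. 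I would then run a counting argument: restricting all indices to $\{0,\dots,N-1\}$, the map $(i_1,i_2,i_3) \mapsto \psi(f,i_1,i_2,i_3)$ equals $1$ exactly when $i_3 \le f(i_1,i_2)$, so this ``pattern of $f$'' recovers $f$ and the $N^{N^2}$ distinct functions $f \colon \{0,\dots,N-1\}^2 \to \{0,\dots,N-1\}$ give $N^{N^2}$ distinct patterns. On the other hand, a pattern depends on $f$ only through $\theta_1,\theta_2,\theta_3$, each a function of two indices into a fixed finite alphabet, so there are at most $C^{N^2}$ possibilities for some constant $C$ independent of $N$. For $N > C$ this is a contradiction, yielding $\NFOP_3$. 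Here the substantive step is the factorisation forced by the arity; the counting is then routine.
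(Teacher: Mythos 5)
Your proof is correct, but it differs from the paper's in both halves, in instructive ways. For $\NFOP_3$, the paper simply cites \cite[Proposition 3.23]{abd2023higher}, which is precisely the general fact you reprove: any theory with quantifier elimination in a language of arity at most $3$ is $\NFOP_3$. Your omitted-block factorisation $\psi(f,i_1,i_2,i_3)=G(\theta_0,\theta_1,\theta_2,\theta_3)$ followed by the count $N^{N^2}$ versus $C^{N^2}$ is a valid self-contained argument (each atom, being at most ternary, must omit one of the four blocks, and the functions $\theta_1,\theta_2,\theta_3$ range over a fixed finite alphabet), so what you lose in brevity you gain in self-containedness. For $\IP_2$, you use the same formula $R(x_0,x_1,y)$ and the same overall strategy (write down a complete quantifier-free configuration, verify $\mathbf{H}_4$-freeness on all $4$-sets, conclude by compactness), but a different completion of the encoding pattern: the paper orients all non-encoding triples ``generically'' according to linear orders on the rows, columns, and parameters (its conditions (ii)--(vi), including a total order on $\mathcal{P}(\omega^2)$), whereas you orient every non-encoding triple negatively. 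Your completion is arguably tidier: since every hyperedge uses one row, one column, and one $b_I$, any $4$-set carries at most two hyperedges, a $4$-set with fewer than two cannot be $\mathbf{H}_4$ (which has exactly two hyperedges in every ordering), and when there are exactly two they share a pair that cyclically dominates the remaining points, so Remark~\ref{rem:h4-free-criterion} applies uniformly; your case analysis by composition (two rows/one column/one $b$, etc.) is exhaustive and each case checks out. The paper's richer completion produces order-indiscernible-like witnesses, which costs more case-checking but is closer in style to the constructions it reuses later (e.g., in the $\SOP_3$ and $\TP_2$ proofs).
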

\begin{proof}
    $\NFOP_3$ follows directly from \cite[Proposition 3.23]{abd2023higher}. 

    We claim that the formula $R(x_1, x_2; y)$ has $\IP_2$. Fix a total order $<^*$ on $\mathcal{P}(\omega^2)$. Let $\Sigma(x_{0,i}, x_{1,j}, y_I)_{i, j \in \omega, I \subseteq \omega^2}$ be the following set of formulas:
    \begin{enumerate}[(i)]
        \item $R(x_{0,i}, x_{1,j}, y_I)$ iff $(i,j) \in I$.
        \item $R(x_{n,i}, x_{n,j}, x_{n,k})$ iff $i < j < k$, for any $n \in \{0,1\}$.
        \item $R(x_{n,i}, x_{n,j}, x_{n',l})$ iff $i<j$, for any $l$ and $\{n, n'\} = \{0,1\}$.
        \item $R(x_{n,i}, x_{n,j}, y_I)$ iff $i < j$, for any $I \subseteq \omega^2$ and $n \in \{0,1\}$.
        \item $R(x_{n,i}, y_I, y_J)$ iff $I <^* J$, for any $i$ and $n \in \{0,1\}$.
        \item $R(y_I, y_J, y_K)$ iff $I <^* J <^* K$.
    \end{enumerate}
    We claim that this is a complete type. The only thing that needs to be checked is that it does not generate any $\mathbf{H}_4$-structures. By (i) and (ii), $\Sigma$ implies that any $4$-point set containing only elements from $\bar{x}_0$ and $\bar{x}_1$ is $\mathbf{H}_4$-free, and, by (vi), it also implies that those containing only $y_I$'s are $\mathbf{H}_4$-free. By (iv), $\Sigma \vdash R(x_{n,i}, x_{n,j}, y_I) \leftrightarrow R(x_{n,i}, x_{n,j}, y_J)$ for all $I, J$, so that $\{x_{n,i}, x_{n,j}, y_I, y_J\}$ is $\mathbf{H}_4$-free, and by (v), $\Sigma \vdash R(x_{n,i}, y_I, y_J) \leftrightarrow R(x_{n', j}, y_I, y_J)$ for $\{n,n'\} = \{0,1\}$, so that $\{x_{n,i}, x_{n',j}, y_I, y_J\}$ is $\mathbf{H}_4$-free. Moreover, also by (iv), $\Sigma$ implies that $\{x_{n,i}, x_{n,j}, x_{n',l}, y_I\}$ is $\mathbf{H}_4$-free. Finally, since $<^*$ is a total order, by (v) $\Sigma$ implies that $\{x_{n,i}, y_I, y_J, y_K\}$ is $\mathbf{H}_4$-free. 

    Since these are all possible $\mathbf{H}_4$-structures, it follows that $\Sigma$ is consistent, and thus, we can choose a realisation $(a_{0,i}, a_{1,j}, b_I)_{i,j \in \omega, I \subseteq \omega^2} \models \Sigma$. By (i), this realisation witnesses $\IP_2$ for the formula $R(x_1, x_2, y)$.
\end{proof}
\begin{proposition}\label{prop:h4-free-is-sop3}
    $T_{\mathbf{H}_4\textnormal{-free}}$ is $\SOP_3$.
\end{proposition}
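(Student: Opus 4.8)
The plan is to unwind the definition of $\SOP_3$ for $n = 3$: I must produce a formula $\phi(\bar x, \bar y)$ with $\size{\bar x} = \size{\bar y}$ and a sequence $(\bar a_i)_{i \in \omega}$ so that (a) $\models \phi(\bar a_i, \bar a_j)$ for all $i < j$, while (b) the set $\{\phi(\bar x_0, \bar x_1), \phi(\bar x_1, \bar x_2), \phi(\bar x_2, \bar x_0)\}$ is inconsistent modulo $T_{\mathbf{H}_4\textnormal{-free}}$. Since the only atomic formula in two free variables is an equality (any instance of $R$ must repeat a variable and is thus always false by axiom (i)), singleton tuples define only trivial binary relations, so I would take $\bar x = (x_0, x_1)$ to be \emph{pairs} and let $\phi$ be a conjunction of instances of $R$ in the coordinates of $\bar x$ and $\bar y$, in the spirit of the preceding $\IP_2$ proof. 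A natural starting candidate is
\begin{equation*}
\phi(x_0 x_1, y_0 y_1) := R(x_0, x_1, y_0) \wedge R(x_0, x_1, y_1),
\end{equation*}
which by Remark \ref{rem:h4-free-criterion} automatically makes $\{x_0, x_1, y_0, y_1\}$ into an $\mathbf{H}_4$-free set; I anticipate having to augment it with further conjuncts so that the no-cycle step (b) goes through.

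For (a) I would specify the quantifier-free type of $(\bar a_i)_{i \in \omega}$ explicitly, listing the value of $R$ on every triple of points drawn from the $a_i^0, a_i^1$, exactly as in the $\IP_2$ construction above, and then check that no four of these points form an $\mathbf{H}_4$. This verification should reduce entirely to the criterion of Remark \ref{rem:h4-free-criterion}: whenever the prescribed relations yield $R(a, b, c) \wedge R(a, b, d)$ on a 4-subset, that subset is automatically $\mathbf{H}_4$-free. Once every 4-subset is cleared, quantifier elimination and the existence of the Fraïssé limit guarantee that the type is realised in $\M$, producing the required increasing chain.

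The heart of the argument, and what I expect to be the main obstacle, is (b). Here I would assume towards a contradiction that some $\bar c_0, \bar c_1, \bar c_2$ realise the three formulas of the cycle, and then exhibit a $4$-element subset of the (at most six) points $c_k^0, c_k^1$ that is forced to be an $\mathbf{H}_4$ — equivalently, a $4$-subset all of whose \emph{links} are cyclic — or else directly produce points $p, q, r$ with $R(p, q, r) \wedge R(r, q, p)$, contradicting axiom (iii). The delicate point is that this inconsistency must genuinely rely on $\mathbf{H}_4$-freeness: a $\phi$ defining an honest (transitive, antisymmetric) order would have no directed cycles of any length and hence witness $\SOP_n$ for all $n$, contradicting the $\NSOP_4$ half of Theorem \ref{thm:main}. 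Consequently $\phi$ must be engineered so that, although an increasing chain creates no forbidden configuration, the single ``back-edge'' $\phi(\bar c_2, \bar c_0)$ closing the triangle forces the missing hyperedges completing an $\mathbf{H}_4$. Pinning down the exact conjuncts of $\phi$ that achieve this simultaneously with the consistency of the chain in (a) is the computation I would carry out with care, most likely via a short case analysis on how the six coordinates are allowed to coincide.
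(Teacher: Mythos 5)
There is a genuine gap: your proposal never exhibits a formula for which step (b) holds, and the cycle-inconsistency argument --- which is the entire mathematical content of the proposition --- is explicitly deferred (``the computation I would carry out with care''). Moreover, your starting candidate $\phi(x_0x_1,y_0y_1) := R(x_0,x_1,y_0)\wedge R(x_0,x_1,y_1)$ provably cannot work, no matter how carefully one checks it: its $3$-cycle is \emph{consistent}. Indeed, order the six points as $x_0<x_1<y_0<y_1<z_0<z_1$ and take the complete $3$-uniform hypergraph with respect to this order; every instance of $R$ prescribed by the cycle is then satisfied (each prescribed triple is a hyperedge with the correct cyclic orientation), and every $4$-subset, having all four hyperedges, is a $\mathbf{C}_4$, so the resulting $3$-hypertournament is $\mathbf{H}_4$-free. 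Thus the ``further conjuncts'' you anticipate are not a finishing touch but the unsolved core of the problem, and your own (correct) observation that $\phi$ must not define a partial order --- else it would witness $\SOP_4$ --- shows how delicate that choice has to be.

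A second, structural issue is the premise that forces you into pairs. Formulas with parameters are admissible witnesses for $\SOP_3$ (parameters can be absorbed into the tuples, or one notes these properties are insensitive to naming constants), and instances of $R$ with two parameters define highly nontrivial binary relations on singletons. This is exactly the paper's route: fix distinct elements $a,b$ and set $\phi(x,y) := R(x,y,a)\wedge R(x,b,y)$. If $(c_0,c_1,c_2)$ realised the $3$-cycle, then on $\{c_0,c_1,c_2,a\}$ the hyperedges $R(c_0,c_1,a)$, $R(c_1,c_2,a)$, $R(c_2,c_0,a)$ force $R(c_0,c_1,c_2)$, since otherwise this set is an $\mathbf{H}_4$; symmetrically, on $\{c_0,c_1,c_2,b\}$ the hyperedges $R(c_0,b,c_1)$, $R(c_1,b,c_2)$, $R(c_2,b,c_0)$ force $R(c_0,c_2,c_1)$ --- a contradiction with axiom (iii). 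The increasing chain is then realised by an explicit $\mathbf{H}_4$-free configuration: $R(x_i,x_j,a)\wedge R(x_i,b,x_j)$ for $i<j$, $R(x_i,x_j,x_k)$ for $i<j<k$, and $R(x_i,b,a)$ for all $i$, each $4$-subset being cleared by Remark \ref{rem:h4-free-criterion}. Your outline stops exactly where this argument has to begin.
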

\begin{proof}
    Fix some elements $a,b$ from the monster model. We prove that the formula $\phi(x,y) := R(x,y,a) \wedge R(x,b,y)$ witnesses $\SOP_3$. First, assume $\phi(x_0, x_1) \wedge \phi(x_1, x_2) \wedge \phi(x_2, x_0)$ is consistent. Let $(c_0, c_1, c_2)$ be a realisation. Then the substructure on $\{c_0, c_1, c_2, a\}$ implies that $\models R(c_0, c_1, c_2)$, since otherwise we obtain an $\mathbf{H}_4$-structure. Similarly, the substructure on $\{c_0, c_1, c_2, b\}$ implies that $\models R(c_0, c_2, c_1)$, so we get a contradiction. Hence, $\phi(x_0, x_1) \wedge \phi(x_1, x_2) \wedge \phi(x_2,x_0)$ is inconsistent. 

    Now let $\Sigma(x_i)_{i \in \omega}$ be the set containing the following formulas over $ab$:
    \begin{enumerate}[(i)]
        \item $R(x_i, x_j, a) \wedge R(x_i, b, x_j)$ for all $i < j$.
        \item $R(x_i, x_j, x_k)$ iff $i < j < k$.
        \item $R(x_i, b, a)$ for all $i$.
    \end{enumerate}
    By (ii), $\Sigma$ implies that any $4$-point set containing only $x_i$'s is $\mathbf{H}_4$-free, and by (i), so is any $4$-point set of the form $\{x_i, x_j, x_k, a\}$ or $\{x_i, x_j, x_k, b\}$. Thus, any potential $\mathbf{H}_4$-structure must be on a set of the form $\{x_i, x_j, a, b\}$. But $\Sigma$ implies that this is $\mathbf{H}_4$-free by (iii).

    Therefore, $\Sigma(x_i)_{i \in \omega}$ is consistent, and so we can choose a realisation $(c_i)_{i \in \omega} \models \Sigma$. By stipulation, $\models \phi(c_i, c_j)$ for all $i < j$.
\end{proof}
\begin{proposition}\label{prop:h4-free-is-tp2}
    $T_{\mathbf{H}_4\textnormal{-free}}$ is $\TP_2$.
\end{proposition}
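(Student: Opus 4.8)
The plan is to exhibit a formula $\phi(\bar x, \bar y)$ together with an array $(\bar a_{i,j})_{i,j\in\omega}$ and a bound $k$, and to verify the two halves of the definition of $\TP_2$ separately. As in the proofs of $\IP_2$ and $\SOP_3$ above, I would work entirely at the quantifier-free level: since $T_{\mathbf{H}_4\text{-free}}$ has quantifier elimination and its age is a strong amalgamation class, a partial specification of $R$ on a finite set of points is consistent precisely when it extends to a finite $\mathbf{H}_4$-free $3$-hypertournament, i.e.\ when it admits a completion with no $\mathbf{H}_4$. Thus ``$\{\phi(\bar x,\bar a_{i,f(i)}):i\in\omega\}$ is consistent'' becomes ``the partial orientation it describes has an $\mathbf{H}_4$-free completion'', and ``a $k$-subset of a row is inconsistent'' becomes ``the orientations forced by those $k$ instances admit no $\mathbf{H}_4$-free completion''.

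For the row-inconsistency I would adapt the mechanism already used for $\SOP_3$. There the formula $\phi(x,y)=R(x,y,a)\wedge R(x,b,y)$ exploited that the two constants $a,b$ force, via $\mathbf{H}_4$-avoidance on the $4$-sets $\{c_0,c_1,c_2,a\}$ and $\{c_0,c_1,c_2,b\}$, the two opposite cyclic orientations on a common triple, which is contradictory. For $\TP_2$ I want this ``over-determination of one orientation from two sides'' to occur inside each single row. The natural attempt is to give each row its own pair of shared witness parameters $(\gamma_i,\delta_i)$ — so that $\bar a_{i,j}$ records a fresh element $w_{i,j}$ together with $(\gamma_i,\delta_i)$ — and to choose $\phi$ and the array so that, for any two columns $j\neq j'$, the instances $\phi(\bar x,\bar a_{i,j})$ and $\phi(\bar x,\bar a_{i,j'})$ pin enough orientations that avoiding an $\mathbf{H}_4$ on the $4$-set through $\gamma_i$ and on the $4$-set through $\delta_i$ forces one common triple to be oriented in two incompatible ways. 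Making the witnesses \emph{per row} rather than global is what should confine the clash to a single row.

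For the path-consistency I would, exactly as in the $\IP_2$ proof, write down an explicit complete quantifier-free type $\Sigma$ for a fixed transversal $(\bar a_{i,f(i)})_i$ together with a realisation $\bar x$, specifying the orientation of every triple, and then check by a finite case analysis over the possible shapes of a $4$-subset (those meeting $\bar x$, the witnesses, and cells of one or several rows) that none of them is an $\mathbf{H}_4$; here the fact that distinct rows carry distinct witnesses, so that no cross-row triple is constrained into the clashing configuration, is what keeps the transversal completable.

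The main obstacle, and the place where the ternary nature of $\mathbf{H}_4$ makes this harder than the analogous $\TP_2$ arguments for graphs, is reconciling these two requirements: an $\mathbf{H}_4$ forced by two instances must have all of its $\bar x$-triples pinned, but a $4$-set separating two columns of a row typically contains a triple involving a variable together with one parameter from \emph{each} column, which no single instance controls, so a naive $k=2$ clash tends to leave a free triple and hence an escape. The real work is therefore to choose the cells, the shared witnesses, and $\phi$ (and the right value of $k$) so that every such free triple is in turn over-determined — forced to two opposite values by two further avoidance constraints — for \emph{all} $k$-subsets of a row at once, while simultaneously verifying that the transversal type $\Sigma$ stays $\mathbf{H}_4$-free. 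Getting the shared sub-structure of a row rich enough to force the clash symmetrically (rather than only for ``ordered'' pairs, as the directed $\SOP_3$ cycle does) without letting it propagate across rows is the delicate combinatorial core of the argument.
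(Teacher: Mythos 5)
Your high-level strategy --- reduce consistency questions to $\mathbf{H}_4$-free completions via quantifier elimination, make each row inconsistent by over-determining a ``free'' triple through two witness-containing $4$-sets, and keep transversals consistent via generic cross-row relations --- is indeed the shape of the paper's argument. But what you have written is a plan, not a proof: the formula $\phi$, the array, the value of $k$, and both verifications are precisely the parts you defer (``the real work'', ``the delicate combinatorial core''), so nothing is actually established. The paper's proof consists exactly of carrying out these steps: an explicit six-conjunct formula $\phi(x,\bar b)$ over two constants $e,f$, an explicit $ef$-indiscernible sequence (conditions (i)--(v)) whose $2$-inconsistency is checked, and an explicit array (conditions (i$'$)--(iii$'$)) whose transversals are checked to be consistent.

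Moreover, the one concrete design decision you do commit to cannot work: taking each cell to be a \emph{single} fresh element $w_{i,j}$ together with row witnesses $(\gamma_i,\delta_i)$. Since every instance in row $i$ is the same formula evaluated at $(w_{i,j},\gamma_i,\delta_i)$, any two instances $j\neq j'$ orient the triples $\{x,w_{i,j},\gamma_i\}$ and $\{x,w_{i,j'},\gamma_i\}$ identically, so these two triples share an oriented pair (namely $x$ and $\gamma_i$ in a fixed cyclic position). By Remark \ref{rem:h4-free-criterion}, a $4$-set carrying two triples with a common oriented pair is automatically $\mathbf{H}_4$-free, so the $4$-sets $\{x,w_{i,j},w_{i,j'},\gamma_i\}$ and $\{x,w_{i,j},w_{i,j'},\delta_i\}$ never force anything; the remaining $4$-sets in a row are either pinned by a single (consistent) instance or involve only unconstrained triples, and orienting $x$ compatibly with a cyclic ordering of the $w_{i,j}$ then completes the whole row $\mathbf{H}_4$-freely. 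Hence with single-element cells the rows are consistent for \emph{every} $k$, and no $\TP_2$ pattern can arise. This is exactly why the paper makes each cell a pair $\bar b_{i,j}=(c_{i,j},d_{i,j})$ with $\phi$ treating the two coordinates asymmetrically --- $c$ and $d$ sit on opposite sides of $e$ and of $f$ --- so that for $j<j'$ the $4$-sets $\{x,c_{i,j},e,d_{i,j'}\}$ and $\{x,c_{i,j},f,d_{i,j'}\}$ force the free triple $\{x,c_{i,j},d_{i,j'}\}$ in two opposite ways. (Your worry about needing a ``symmetric'' clash is, by contrast, harmless: inconsistency of every unordered pair already follows from inconsistency of every ordered pair, which is all the paper proves.)
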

\begin{proof}[Proof of Proposition \ref{prop:h4-free-is-tp2} (v.1)]
    For convenience, let us expand $\mathcal{L}$ by adding two constant symbols $e$ and $f$. Let $\bar{b} = (c,d)$ be a pair such that
    \begin{equation*}
        \models R(c, d, e) \wedge R(d, e, f) \wedge R(c, e, f) \wedge R(c, f, d).
    \end{equation*}
    Now let $\phi(x, \bar{b})$ be the following $\mathcal{L}(e,f)$-formula:
    \begin{equation*}
        R(x, c, d) \wedge R(x, d, e) \wedge R(x, e, f) \wedge R(x, c, f) \wedge R(x, e, c) \wedge R(x, f, d).
    \end{equation*}
    It is easy to see from the definition that, for any $a \models \phi(x, \bar{b})$, the set $\{a, c, d, e, f\}$ is $\mathbf{H}_4$-free, and therefore, $\phi(x, \bar{b})$ isolates a complete type in $\mathcal{L}(e,f)$.

    Now let $\Sigma(\bar{v}_i)_{i \in \omega}$ with $\bar{v}_i := (x_i, y_i)$ be the following set of formulas:
\begin{enumerate}[(i)]
    \item $\bar{v}_i \equiv_{ef} \bar{b}$ for all $i$.
    \item $R(e, y_j, x_i) \wedge R(f, x_i, y_j)$ iff $i < j$.
    \item $R(e, x_i, x_j) \wedge R(f, x_i, x_j) \wedge R(e, y_i, y_j) \wedge R(f, y_i, y_j)$ iff $i < j$.
    \item $R(x_i, x_j, x_k) \wedge R(y_i, y_j, y_k)$ iff $i < j < k$.
    \item $R(x_i, x_j, y_n)$ iff $i<j$ and $R(x_m, y_k, y_l)$ iff $k<l$, for all $m$ and $n$.
\end{enumerate}
We claim that $\Sigma(\bar{v}_i)_{i \in \omega}$ is consistent. Indeed, by (i), it suffices to consider those $4$-sets containing elements from different $\bar{v}_i$'s. It is straightforward using (iv) and (v) that $4$-sets containing only $x_i$'s and $y_j$'s are $\mathbf{H}_4$-free. Similarly, by (ii) and (iii), $4$-sets containing only $x_i$'s, $y_j$'s and exactly one of $e$ or $f$ are also $\mathbf{H}_4$-free, and similarly for those $4$-sets containing both $e$ and $f$ and elements from exactly one of $\bar{x}$ or $\bar{y}$. So we just need to check that sets of the form $\{e, f, x_i, y_j\}$ are $\mathbf{H}_4$-free. But these are $\mathbf{H}_4$-free by (i). Note that a realisation will give an $ef$-indiscernible sequence.

Hence, $\Sigma(\bar{v}_i)_{i \in \omega}$ defines a type in $\mathcal{L}(e,f)$, so we can choose a realisation $\bar{b}_i := (c_i, d_i)_{i \in \omega} \models \Sigma$. Then, by (ii), the substructure on $\{x,d_j,e,c_i\}$ implies that $\phi(x, \bar{b}_i) \wedge \phi(x, \bar{b}_j) \vdash R(x, d_j, c_i)$ for $i < j$, since otherwise we obtain an $\mathbf{H}_4$-structure. Similarly, the substructure on $\{x,c_i,f,d_j\}$ implies that $\phi(x, \bar{b}_i) \wedge \phi(x, \bar{b}_j) \vdash R(x, c_i, d_j)$ for $i < j$. Therefore, the set $\{\phi(x, \bar{b}_i) : i \in \omega\}$ is $2$-inconsistent.

To make this into an instance of $\TP_2$, we just take copies of this indiscernible sequence. Formally, we choose an array $(\bar{b}_{i,j})_{i, j \in \omega} = (c_{i,j},d_{i,j})_{i, j \in \omega}$ such that:
\begin{enumerate}
    \item[(i$'$)] For all $i \in \omega$, $(\bar{b}_{i,j})_{j \in \omega} \equiv_{ef} (\bar{b}_j)_{j \in \omega}$.
    \item[(ii$'$)] For any $i \neq j$, $l$ and $m$, we have
        \begin{align*}
            \models R(z, c_{j,m}, d_{j,m}) \wedge R(z, c_{j,m}, e) \wedge R(z, c_{j,m}, f) \wedge R(z, d_{j,m}, e) \wedge R(z, d_{j,m}, f)
        \end{align*}
        for any $z \in \bar{b}_{i,l}$.
    \item[(iii$'$)] For any $i < j < k$, any $l,m,n$, and $z_\ell \in \bar{b}_{\ell,u}$ for $\ell \in \{i,j,k\}$ and $u \in \{l,m,n\}$, we have $\models R(z_i,z_j,z_k)$. 
\end{enumerate}
It is then easy to see that (i$'$) implies that $\{\phi(x, \bar{b}_{i,j}) : j \in \omega\}$ is $2$-inconsistent for each $i \in \omega$ by the argument from the previous paragraph, and that (ii$'$) and (iii$'$) jointly imply that $\{\phi(x, \bar{b}_{i, f(i)}) : i \in \omega\}$ is consistent for any $f \colon \omega \to \omega$. 
\end{proof}
\begin{remark}\label{rem:counterexample-to-claim}
    Let us note, using the choice of elements from the previous proof, that the inconsistency of $\phi(x, \bar{b}_0) \wedge \phi(x,\bar{b}_1)$ implies that, for any $a \models \phi(x, \bar{b}_0)$, we cannot find any $a' \equiv_{ef\bar{b}_0} a$ such that $\bar{b}_1 \equiv_{efa'} \bar{b}_0$. This shows that we cannot extend Claim 1 from the proof of Lemma \ref{lem:wei} to tuples of finite length greater than 1. 
\end{remark}
\begin{proposition}\label{prop:h4-free-is-nsop4}
    $T_{\mathbf{H}_4\textnormal{-free}}$ is $\NSOP_4$.
\end{proposition}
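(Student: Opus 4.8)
The plan is to give a direct, combinatorial proof, showing that no quantifier-free formula can witness $\SOP_4$ because, in $T_{\mathbf{H}_4\textnormal{-free}}$, any ``$3$-path'' can be completed to a ``$4$-cycle'' by an explicit $\mathbf{H}_4$-free amalgamation---in contrast to the situation for $3$-cycles, whose obstruction gives $\SOP_3$ (Proposition \ref{prop:h4-free-is-sop3}). Suppose toward a contradiction that $\phi(\bar x, \bar y)$ with $\size{\bar x} = \size{\bar y}$ witnesses $\SOP_4$, so there is $(\bar a_i)_{i \in \omega}$ with $\models \phi(\bar a_i, \bar a_j)$ for $i < j$ and the $4$-cycle $\phi(\bar x_0, \bar x_1) \wedge \phi(\bar x_1, \bar x_2) \wedge \phi(\bar x_2, \bar x_3) \wedge \phi(\bar x_3, \bar x_0)$ inconsistent. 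By Ramsey's theorem and compactness I would pass to a $\varnothing$-indiscernible sequence $(\bar a_i)_{i \in \mathbb{Z}}$ retaining forward $\phi$, and by quantifier elimination (Fact \ref{fact:fraisse-limit-omega-cat}) assume $\phi$ is quantifier-free, hence a Boolean combination of $R$-atoms and equalities in the coordinates of $\bar x, \bar y$.

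Next I set $\bar b_1 := \bar a_1$ and $\bar b_2 := \bar a_2$, so that $\models \phi(\bar b_1, \bar b_2)$, and aim to build $\bar b_0, \bar b_3$ realizing the whole $4$-cycle, contradicting its inconsistency. I would prescribe a quantifier-free type on $\bar b_0 \bar b_1 \bar b_2 \bar b_3$ by imposing: $\qftp(\bar b_0 \bar b_1 \bar b_2) = \qftp(\bar a_0 \bar a_1 \bar a_2)$ (so $\bar b_0$ sits ``before'' the chain, forcing $\phi(\bar b_0, \bar b_1)$); $\qftp(\bar b_1 \bar b_2 \bar b_3) = \qftp(\bar a_1 \bar a_2 \bar a_3)$ (so $\bar b_3$ sits ``after'' the chain, forcing $\phi(\bar b_2, \bar b_3)$); and, crucially, orienting the remaining relations between $\bar b_0$ and $\bar b_3$ so that $(\bar b_3, \bar b_0)$ has the type of a \emph{forward} pair $(\bar a_i, \bar a_j)$ with $i<j$, forcing $\phi(\bar b_3, \bar b_0)$. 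Note this last requirement is exactly the ``reverse'' of the chain order, so the configuration is not already present in the indiscernible sequence; realizing it requires genuine amalgamation over the shared chain $\{\bar b_1, \bar b_2\}$, and it is here that the extra room afforded by having \emph{two} new tuples (rather than the single one needed to close a $3$-cycle) is used.

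The heart of the argument is then to check that this prescribed finite structure is consistent with $T_{\mathbf{H}_4\textnormal{-free}}$, i.e.\ that it is $\mathbf{H}_4$-free; by strong amalgamation (Lemma \ref{lem:h4-free-is-strong-amalgamation-class}) and quantifier elimination this suffices to realize the $4$-cycle. The main tool is the criterion of Remark \ref{rem:h4-free-criterion}: a $4$-set is $\mathbf{H}_4$-free as soon as two of its points $p,q$ satisfy $R(p,q,r) \wedge R(p,q,s)$ for the remaining two points $r,s$. The $4$-sets contained in $\bar b_0 \bar b_1 \bar b_2$ or in $\bar b_1 \bar b_2 \bar b_3$ are automatically $\mathbf{H}_4$-free, as they match the consistent indiscernible sequence; the dangerous $4$-sets are those meeting both $\bar b_0$ and $\bar b_3$, and these I would kill by choosing the cross relations ``outward,'' consistently with $\bar b_0$ before and $\bar b_3$ after the chain, exactly in the spirit of the explicit constructions in the proofs of Propositions \ref{prop:h4-free-is-sop3} and \ref{prop:h4-free-is-tp2}.

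I expect the main obstacle to be precisely this last verification, made delicate by $\phi$ being an arbitrary Boolean combination on \emph{tuples}: closing the cycle means forcing a whole prescribed pattern of $R$-relations between $\bar b_3$ and $\bar b_0$ at once, and I must confirm that the single ``outward'' orientation of the cross relations is simultaneously compatible with making $(\bar b_3, \bar b_0)$ a forward pair (so that $\phi(\bar b_3, \bar b_0)$ genuinely holds) and with $\mathbf{H}_4$-freeness. The subtle cases are the $4$-sets using coordinates from three of the four blocks $\bar b_0, \bar b_1, \bar b_2, \bar b_3$, where some relations are already forced by the two matched triples; handling these should require a careful case analysis applying the criterion of Remark \ref{rem:h4-free-criterion} to each such $4$-set, more involved than---but structurally parallel to---the verification in the proof of Proposition \ref{prop:h4-free-is-tp2}.
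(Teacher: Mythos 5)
Your reduction to an indiscernible sequence and the idea of closing a $3$-path to a $4$-cycle by an $\mathbf{H}_4$-free amalgamation is indeed the paper's strategy, but the configuration you prescribe is strictly stronger than the paper's, and the step you flag as ``the main obstacle'' does not merely need care: it fails. The culprit is the common intersection $\bar{c}$ of the tuples of the indiscernible sequence, which your outline never addresses. Under your prescriptions, any $c \in \bar{c}$ has \emph{prescribed} relations to all three of $u \in \bar{b}_0\setminus\bar{c}$, $w \in (\bar{b}_1\bar{b}_2)\setminus\bar{c}$, $v \in \bar{b}_3\setminus\bar{c}$: the relation on $\{c,u,w\}$ comes from $\qftp(\bar{b}_0\bar{b}_1\bar{b}_2)=\qftp(\bar{a}_0\bar{a}_1\bar{a}_2)$, that on $\{c,w,v\}$ from $\qftp(\bar{b}_1\bar{b}_2\bar{b}_3)=\qftp(\bar{a}_1\bar{a}_2\bar{a}_3)$, and that on $\{c,v,u\}$ from demanding $(\bar{b}_3,\bar{b}_0)$ be a forward pair; so all three are forward-pair relations of the sequence, read at position pairs $(i,k)$, $(k,j)$, $(j,i)$ which go around a cycle. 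Now observe (by direct inspection of Remark \ref{rem:h4-free-criterion}'s presentation of $\mathbf{H}_4$) that a $4$-set $\{c,u,w,v\}$ is an $\mathbf{H}_4$-structure exactly when $c$ sees $u,w,v$ cyclically, say $R(c,u,w)\wedge R(c,w,v)\wedge R(c,v,u)$, and the remaining triple is oriented \emph{against} that cycle, i.e.\ $R(u,v,w)$. Hence every $c \in \bar{c}$ whose three prescribed relations are cyclic forces the orientation of your ``free'' cross triple $\{u,w,v\}$, and two elements of $\bar{c}$ can force \emph{opposite} orientations, making your prescription unsatisfiable: no choice of ``outward'' cross relations exists at all.

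This conflict is realized by an actual indiscernible sequence. Let $\bar{c}=(c_1,c_2)$ and take points $a_{m,s}$, $m\in\omega$, $s\in\{0,1,2\}$, ordered lexicographically by $(m,s)$; orient all triples of $a$'s and all triples $\{c_1,a,a'\}$ increasingly in this order, all triples $\{c_2,a,a'\}$ decreasingly, and impose $R(c_1,c_2,a_{m,s})$ for all $m,s$. Every $4$-set then contains two triples sharing an oriented edge, so the structure is $\mathbf{H}_4$-free by Remark \ref{rem:h4-free-criterion}; moreover, for $m<m'$ it satisfies $R(c_1,a_{m,0},a_{m',1})$, $R(c_1,a_{m,1},a_{m',2})$, $R(c_1,a_{m,2},a_{m',0})$, and the reversed relations for $c_2$. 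Extracting a $\bar{c}$-indiscernible sequence of the tuples $(a_{m,0},a_{m,1},a_{m,2})$ preserves these relations, and for that sequence your prescriptions give, with $u=b_{0,0}$, $w=b_{1,1}$, $v=b_{3,2}$, that $c_1$ sees the cycle $u\to w\to v\to u$ while $c_2$ sees its reverse; so $\{c_1,u,w,v\}$ forces $R(u,w,v)$ and $\{c_2,u,w,v\}$ forces $R(u,v,w)$, a contradiction. (When $\bar{c}=\varnothing$ your plan does work: set $R(u,v,w)$ for all cross triples and apply Remark \ref{rem:h4-free-criterion}; the whole obstruction lives in $\bar{c}$.) The paper's proof of Proposition \ref{prop:h4-free-is-nsop4} avoids this precisely by never prescribing the sequence's $3$-types: it only imposes the $2$-type $p$ along the four edges of the cycle and fills in \emph{all} remaining relations, including those among $\bar{c}$, $\bar{b}_0$ and the new tuples, by explicit uniform choices in two successive amalgamation steps (the second of which forgets $\bar{b}_1$ entirely). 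That slack is exactly what your over-constrained prescription gives away, so the fix is not a finer case analysis but a genuine weakening of what you demand of $\bar{b}_0\bar{b}_1\bar{b}_2$ and $\bar{b}_1\bar{b}_2\bar{b}_3$.
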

\begin{proof}[Proof of Proposition \ref{prop:h4-free-is-nsop4} (v.1)]
    It suffices to show that, for any indiscernible sequence $(\bar{b}_i)_{i < \omega}$ with common intersection $\bar{c}$ (possibly empty), and letting $p(\bar{x}, \bar{y}) := \tp(\bar{b}_0 \bar{b}_1)$, $p(\bar{x}_0, \bar{x}_1) \wedge p(\bar{x}_1, \bar{x}_2) \wedge p(\bar{x}_2, \bar{x}_3) \wedge p(\bar{x}_3, \bar{x}_0)$ is consistent. By indiscernibility, we can equivalently take $(\bar{b}_i)_{i < \omega}$ to be $\bar{c}$-indiscernible and consist only of those elements outside of $\bar{c}$ from each tuple. 

    So let $(\bar{b}_i)_{i < \omega}$ be a $\bar{c}$-indiscernible sequence as above, and let $p(\bar{x}, \bar{y}) := \tp(\bar{b}_0\bar{b}_1/\bar{c})$. Let $n := \size{\bar{b}_0}$. Notice that $p$ is of the following form:
    \begin{equation*}
        p(\bar{x}, \bar{y}) = \tp(\bar{x}/\bar{c}) \wedge \tp(\bar{y}/\bar{c}) \wedge \tp(\bar{x}\bar{y}) \wedge \bigwedge_{j,k<n} R(c_i, x_j, y_k)^{\epsilon(i,j,k)},
    \end{equation*}
    where $\epsilon(i,j,k) \in \{0,1\}$. (Note that $j,k$ can be equal.)

    Define $\Sigma(\bar{x})$ with $\size{\bar{x}}=n$ as the set containing the following formulas:
    \begin{enumerate}[(i)]
        \item $\bar{b}_0 \bar{b}_1 \equiv_{\bar{c}} \bar{b}_1 \bar{x}$.
        \item $R(c_i, b_{0,j}, x_k)$ for all $i$ and $j, k < n$ (not necessarily distinct).
        \item $R(b_{0,i}, x_j, b_{1,k})$ for all $i, j, k < n$.
    \end{enumerate}
    We claim that this is $\mathbf{H}_4$-free. Indeed, by (i), any potential $\mathbf{H}_4$-structures involving $\bar{b}_0$, $\bar{b}_1$ and $\bar{x}$ arise from the last big conjunction in $p(\bar{b}_0, \bar{b}_1)$ and $p(\bar{b}_1, \bar{x})$. It also follows from (i) and (iii) that there cannot be an $\mathbf{H}_4$-structure which does not contain any element from $\bar{c}$. Thus, it suffices to consider possible $\mathbf{H}_4$-structures on sets of the form $\{c_i, b_{0,j}, b_{1,k}, x_l\}$ for some fixed $i,j,k$ and $l$. 
    \begin{itemize}
        \item If $\epsilon(i,j,k) = \epsilon(i,k,l) = 1$, then $\models R(c_i, b_{0,j}, b_{1,k})$ and $\Sigma(\bar{x}) \vdash R(c_i, b_{0,j}, x_l)$ by (ii). Therefore, $\Sigma(\bar{x})$ implies that $\{c_i, b_{0,j}, b_{1,k}, x_l\}$ is $\mathbf{H}_4$-free.
        \item If $\epsilon(i,j,k) = \epsilon(i,k,l) = 0$, then $\models R(b_{0,j}, c_i, b_{1,k})$ and $\Sigma(\bar{x}) \vdash R(b_{0,j}, x_l, b_{1,k})$ by (iii). Hence, once again, $\Sigma(\bar{x})$ implies that $\{c_i, b_{0,j}, b_{1,k}, x_l\}$ is $\mathbf{H}_4$-free.
        \item If $\epsilon(i,j,k) = 1$ but $\epsilon(i,k,l) = 0$, then we have $\models R(c_i, b_{0,j}, b_{1,k})$ and $\Sigma(\bar{x}) \vdash R(c_i, x_l, b_{1,k})$ by (i), and so $\Sigma(\bar{x})$ implies that $\{c_i, b_{0,j}, b_{1,k}, x_l\}$ is $\mathbf{H}_4$-free. A similar argument works if $\epsilon(i,j,k)=0$ but $\epsilon(i,k,l) = 1$.
    \end{itemize}
    So $\Sigma(\bar{x})$ defines a partial type over $\bar{b}_0\bar{b}_1\bar{c}$ (note that we do not specify $\tp(\bar{b}_0\bar{x})$ since it is not needed for our purposes), and so we choose a realisation $\bar{b}_2^* \models \Sigma$.

    Now define $\Gamma(\bar{y})$ with $\size{\bar{y}}=n$ as the set containing the following formulas:
    \begin{enumerate}
        \item[(i$'$)] $\bar{y} \bar{b}_0 \equiv_{\bar{c}} \bar{b}_2^* \bar{y} \equiv_{\bar{c}} \bar{b}_0 \bar{b}_1$.
        \item[(ii$'$)] $R(b_{0,i}, b_{2,j}^*, y_k)$ for all $i, j, k < n$.
    \end{enumerate}
    Once again, by the two stipulations, it suffices to show that no set of the form $\{c_i, b^*_{2,j}, y_k, b_{0,l}\}$ gives rise to an $\mathbf{H}_4$-structure for some fixed $i,j,k$ and $l$. 
    \begin{itemize}
        \item If $\epsilon(i,j,k) = \epsilon(i,k,l) = 1$, then $\Gamma(\bar{y}) \vdash R(c_i, b_{2,j}^*, y_k) \wedge R(b_{0,l}, b_{2,j}^*, y_k)$, and thus $\Gamma(\bar{y})$ implies $\{c_i, b_{2,j}^*, y_k, b_{0,l}\}$ is $\mathbf{H}_4$-free.
        \item If $\epsilon(i,j,k) = \epsilon(i,k,l) = 0$, then we have $\models R(c_i, b_{0,l}, b_{2,j}^*)$ and $\Gamma(\bar{y}) \vdash R(c_i, b_{0,l}, y_k)$. Hence, once again, $\Gamma(\bar{y})$ implies that $\{c_i, b_{2,j}^*, y_k, b_{0,l}\}$ is $\mathbf{H}_4$-free.
        \item If $\epsilon(i,j,k) = 1$ but $\epsilon(i,k,l) = 0$, then $\Gamma(\bar{y}) \vdash R(c_i, b_{2,j}^*, y_k) \wedge R(c_i, b_{0,l}, y_k)$, and so $\Gamma(\bar{y})$ implies that $\{c_i, b_{2,j}^*, y_k, b_{0,l}\}$ is $\mathbf{H}_4$-free. A similar argument works if $\epsilon(i,j,k) = 0$ but $\epsilon(i,k,l) = 1$.
    \end{itemize}
    Hence, again, $\Gamma(\bar{y})$ defines a type over $\bar{b}_0\bar{b}_2^*\bar{c}$, and so by $\omega$-saturation we can choose a realisation $\bar{b}_3^* \models \Gamma$. Then $\models p(\bar{b}_0, \bar{b}_1) \wedge p(\bar{b}_1, \bar{b}_2^*) \wedge p(\bar{b}_2^*, \bar{b}_3^*) \wedge p(\bar{b}_3^*, \bar{b}_0).$
\end{proof}
This completes our first proof of Theorem \ref{thm:main}.
\begin{remark}
    Surprisingly, we have not been able to produce a direct proof that the $\mathbf{H}_4$-free $3$-hypertournament is $\NSOP$, despite the fact that $\NSOP_4$ is a stronger property. 
\end{remark}
\section{\texorpdfstring{$\leq_K$}{leqK}-independence} \label{sec:lascar-independence}
In order to apply Mutchnik's framework from \cite{mutchnik2022conantindependence} to offer a different proof of the above classification results for the $\mathbf{H}_4$-free $3$-hypertournament, we need to slightly generalise them. In their current form, the results from that paper apply whenever we have an independence relation satisfying, among other properties, stationarity over models. However, as will become clear in the next section, we want to apply them in the context of a non-stationary independence relation. To achieve this, in this section, we introduce an independence relation having a close relationship with \textit{strong Lascar independence} as defined in \cite{tartarotti2023lascar}.
\subsection{Defining \texorpdfstring{$\leq_K$}{leqK}-independence}
In this section, we work in a general theory $T$ with monster model $\M$.
\begin{definition}
    For $M \models T$, we define $\bar{a} \ind^{\leq_K}_M \bar{b}$ if $\tp(\bar{a}/M\bar{b})$ extends to a $\leq_K$-least extension of $\tp(\bar{a}/M)$.
\end{definition}
We introduce some auxiliary notation that will be useful for the following proofs. Let $\sigma \in \Aut(\M)$. For a formula $\phi(\bar{x}) \in \mathcal{L}(M)$, let us write $\sigma(\phi(\bar{x})) \in \mathcal{L}(\sigma(M))$ for the formula where the parameters are shifted by $\sigma$. Similarly, for a global type $q$, we write $\sigma(q) := \{\sigma(\phi(\bar{x})) : \phi(\bar{x}) \in q\}$.
\begin{lemma}\label{lem:invariance-for-lascar}
    Let $q$ be an global $M$-invariant type, and let $\sigma \in \Aut(\M)$. Then $\phi(\bar{x},\bar{y}) \in \cl_K(q)$ iff $\sigma(\phi(\bar{x},\bar{y})) \in \cl_K(\sigma(q))$. 
\end{lemma}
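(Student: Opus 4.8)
The statement to prove is Lemma \ref{lem:invariance-for-lascar}: for a global $M$-invariant type $q$ and $\sigma \in \Aut(\M)$, we have $\phi(\bar{x},\bar{y}) \in \cl_K(q)$ iff $\sigma(\phi(\bar{x},\bar{y})) \in \cl_K(\sigma(q))$. The whole point of the lemma is that the Kim-dividing class is well-behaved under automorphisms of the monster, so the proof should just be a matter of carefully tracking how each ingredient in the definition of $\cl_K(q)$ transforms under $\sigma$. The definition of $\cl_K(q)$ involves three things: (a) the parameter model $M$ (the formula must lie in $\mathcal{L}_M$); (b) consistency of $\phi(\bar{x},\bar{a})$ for a realisation $\bar{a} \models q|_M$; and (c) the existence of a sequence $(\bar{a}_i)_{i\in\omega}$ with $\bar{a}_i \models q|_{M\bar{a}_{<i}}$ and $\{\phi(\bar{x},\bar{a}_i) : i \in \omega\}$ inconsistent. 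By symmetry (applying $\sigma^{-1}$ to go back) it suffices to prove one direction, so I would only show the left-to-right implication.

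\textbf{Key steps.}
First I would note that $\sigma(q)$ is itself a global $\sigma(M)$-invariant type: this is immediate since $\sigma$ conjugates $\Aut(\M/M)$ onto $\Aut(\M/\sigma(M))$, so $\sigma(q)$ is preserved by every automorphism fixing $\sigma(M)$, and $\sigma(\phi(\bar{x},\bar{y}))$ does lie in $\mathcal{L}_{\sigma(M)}$ as required. This handles condition (a). Next, the central observation is that $\sigma$ transports realisations: since $q$ is $M$-invariant and $\sigma$ maps $M$ to $\sigma(M)$, if $\bar{a} \models q|_M$ then $\sigma(\bar{a}) \models \sigma(q)|_{\sigma(M)}$, and more generally if $(\bar{a}_i)_{i\in\omega}$ is a sequence with $\bar{a}_i \models q|_{M\bar{a}_{<i}}$ then $(\sigma(\bar{a}_i))_{i\in\omega}$ satisfies $\sigma(\bar{a}_i) \models \sigma(q)|_{\sigma(M)\sigma(\bar{a}_{<i})}$. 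This is because applying $\sigma$ to a type over a set $B$ yields the corresponding type over $\sigma(B)$. Finally, since $\sigma$ is an automorphism, it preserves both consistency and inconsistency of sets of formulas: $\{\phi(\bar{x},\bar{a}_i) : i \in \omega\}$ is inconsistent iff $\{\sigma(\phi(\bar{x},\bar{a}_i)) : i \in \omega\} = \{\sigma(\phi)(\bar{x}, \sigma(\bar{a}_i)) : i \in \omega\}$ is inconsistent, and likewise consistency of $\phi(\bar{x},\bar{a})$ transfers to $\sigma(\phi)(\bar{x},\sigma(\bar{a}))$. Assembling these three facts, a witnessing sequence for $\phi \in \cl_K(q)$ maps under $\sigma$ to a witnessing sequence for $\sigma(\phi) \in \cl_K(\sigma(q))$, which is exactly what I need.

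\textbf{The main obstacle.}
There is no serious mathematical difficulty here; this is a bookkeeping lemma. The only thing requiring genuine care is the bookkeeping of how $\sigma$ acts on formulas \emph{with parameters} versus the free variables. I must be consistent about the convention, introduced just before the lemma, that $\sigma(\phi(\bar{x}))$ denotes the formula with its parameters shifted by $\sigma$ while the free variables $\bar{x}$ stay fixed. The subtle point is that $\phi(\bar{x},\bar{y})$ in the statement has $\bar{y}$ as a slot into which parameters are substituted, so I need to verify that $\sigma\bigl(\phi(\bar{x}, \bar{a})\bigr) = (\sigma\phi)(\bar{x}, \sigma(\bar{a}))$ as formulas over $\sigma(M)$ --- i.e.\ that shifting the parameters of the instantiated formula agrees with instantiating the parameter-shifted formula at $\sigma(\bar{a})$. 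Once this compatibility is stated cleanly, the rest is a direct translation, and I expect the written proof to be short.
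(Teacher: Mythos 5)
Your proposal is correct and takes essentially the same approach as the paper: both reduce to a single implication by symmetry, note that $\sigma(q)$ is $\sigma(M)$-invariant, and transport Morley sequences along the automorphism, using that $\sigma$ preserves (in)consistency of sets of formulas. The only cosmetic difference is that you push forward a witnessing sequence for $\phi \in \cl_K(q)$ (the existential form of the definition), whereas the paper pulls back an arbitrary Morley sequence in $\sigma(q)$ via $\sigma^{-1}$ (the universal form); these are interchangeable by the equivalence recorded in the definition of $\cl_K$.
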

\begin{proof}
    It suffices to show one of the implications. First note that, as $q$ is $M$-invariant, $\sigma(q)$ is $\sigma(M)$-invariant. Now suppose $\phi(\bar{x},\bar{y}) \in \cl_K(q)$. Let $(\bar{b}_i)_{i \in \omega}$ be a Morley sequence in $\sigma(q)$ over $\sigma(M)$. Then, for any $i$, since $\bar{b}_i \models \sigma(q)|_{\sigma(M)\bar{b}_{<i}}$ and $\sigma$ is an automorphism, it follows that $\sigma^{-1}(\bar{b}_i) \models q|_{M\sigma^{-1}(\bar{b}_{<i})}$. Therefore, $(\sigma^{-1}(\bar{b}_i))_{i \in \omega}$ is a Morley sequence in $q$ over $M$, and thus, by assumption, $\{\phi(\bar{x}, \sigma^{-1}(\bar{b}_i)) : i \in \omega\}$ is inconsistent. Hence, $\{\sigma(\phi(\bar{x}, \bar{b}_i)) : i \in \omega\}$ is inconsistent, as required. 
\end{proof}
\begin{corollary}
    $\ind^{\leq_K}$ is an independence relation (i.e., it is $\Aut(\M)$-invariant). 
\end{corollary}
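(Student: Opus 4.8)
The plan is to show that $\ind^{\leq_K}$ is preserved by every $\sigma \in \Aut(\M)$, transporting the witnessing $\leq_K$-least type along $\sigma$ and using Lemma \ref{lem:invariance-for-lascar} to control how $\sigma$ acts on Kim-dividing classes. Since the assertion of $\Aut(\M)$-invariance is a biconditional and $\sigma^{-1}$ is again an automorphism, it suffices to prove one implication: that $\bar{a} \ind^{\leq_K}_M \bar{b}$ implies $\sigma(\bar{a}) \ind^{\leq_K}_{\sigma(M)} \sigma(\bar{b})$.

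So I would suppose $\bar{a} \ind^{\leq_K}_M \bar{b}$ and fix a global $M$-invariant extension $p$ of $\tp(\bar{a}/M)$ witnessing this, i.e. $p \supseteq \tp(\bar{a}/M\bar{b})$ and $p$ is $\leq_K$-least. The claim is that $\sigma(p)$ witnesses $\sigma(\bar{a}) \ind^{\leq_K}_{\sigma(M)} \sigma(\bar{b})$. The routine checks are that $\sigma(p)$ is a global $\sigma(M)$-invariant extension of $\tp(\sigma(\bar{a})/\sigma(M))$ containing $\tp(\sigma(\bar{a})/\sigma(M)\sigma(\bar{b}))$: invariance under $\sigma(M)$ is exactly the observation opening the proof of Lemma \ref{lem:invariance-for-lascar}, and the two inclusions of types follow by applying $\sigma$ to the corresponding inclusions for $p$.

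The substantive step is that $\sigma(p)$ is $\leq_K$-least among global $\sigma(M)$-invariant extensions of $\tp(\sigma(\bar{a})/\sigma(M))$. Here I would first note that $q' \mapsto \sigma^{-1}(q')$ is a bijection between the global $\sigma(M)$-invariant extensions of $\tp(\sigma(\bar{a})/\sigma(M))$ and the global $M$-invariant extensions of $\tp(\bar{a}/M)$, with inverse $q \mapsto \sigma(q)$; this again uses only that $\sigma^{-1}$ carries $\sigma(M)$-invariant types to $M$-invariant ones. Given an arbitrary such $q'$, leastness of $p$ gives $\cl_K(p) \subseteq \cl_K(\sigma^{-1}(q'))$. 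To deduce $\cl_K(\sigma(p)) \subseteq \cl_K(q')$, I would take $\phi \in \cl_K(\sigma(p))$ and write $\phi = \sigma(\psi)$ with $\psi = \sigma^{-1}(\phi)$; Lemma \ref{lem:invariance-for-lascar} applied to $p$ yields $\psi \in \cl_K(p)$, hence $\psi \in \cl_K(\sigma^{-1}(q'))$, and then Lemma \ref{lem:invariance-for-lascar} applied to $\sigma^{-1}(q')$ gives $\phi = \sigma(\psi) \in \cl_K(q')$. As $q'$ was arbitrary, $\sigma(p)$ is $\leq_K$-least, which is what was needed.

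The only thing requiring care is the direction of $\sigma$ versus $\sigma^{-1}$ in each invocation of Lemma \ref{lem:invariance-for-lascar}, together with the verification that $\sigma^{-1}$ genuinely restricts to the claimed bijection on invariant extensions; neither is a real obstacle once the lemma is in hand, and the whole argument is essentially a translation bookkeeping. Applying the implication just established to $\sigma^{-1}$ in place of $\sigma$ gives the converse, so the biconditional holds and $\ind^{\leq_K}$ is $\Aut(\M)$-invariant.
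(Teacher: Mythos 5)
Your proof is correct and takes essentially the same approach as the paper: both reduce the $\Aut(\M)$-invariance of $\ind^{\leq_K}$ to the fact that $\leq_K$-leastness transfers along automorphisms, which is obtained from Lemma \ref{lem:invariance-for-lascar} (in your case applied twice, to the witnessing type $p$ and to the pulled-back competitor $\sigma^{-1}(q')$, together with the bijection $q' \mapsto \sigma^{-1}(q')$ on invariant extensions). The paper compresses all of this into a single sentence; your write-up is just the explicit bookkeeping behind that sentence.
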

\begin{proof}
    By Lemma \ref{lem:invariance-for-lascar}, we have that $q$ is a $\leq_K$-least extension of $q|_M$ iff $\sigma(q)$ is a $\leq_K$-least extension of $\sigma(q)|_{\sigma(M)}$ for any $\sigma \in \Aut(\M)$.
\end{proof}
\begin{remark} \label{rem:lascar-basic-props}
    It is immediate from the definition that $\leq_K$-independence satisfies right monotonicity and right extension. In particular, it has existence iff it has full existence. $M$-$\ind^{\leq_K}$-free types are precisely the global types which are $\leq_K$-least over their restrictions to $M$.
\end{remark}
\begin{lemma}\label{lem:lascar-left-mon}
    If $\ind^{\textnormal{i}}$ satisfies left extension over models, then $\ind^{\leq_K}$ satisfies left monotonicity. 
\end{lemma}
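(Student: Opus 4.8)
The plan is to exhibit an explicit $\leq_K$-least witness for the smaller tuple by restricting the one for the larger tuple. After reordering coordinates we may write $\bar a = \bar a'\bar a''$, and we are given a $\leq_K$-least extension $p$ of $\tp(\bar a/M)$ realising $\tp(\bar a/M\bar b)$, witnessing $\bar a \ind^{\leq_K}_M \bar b$. The natural candidate for the smaller tuple is the restriction $p'$ of $p$ to the variables of $\bar a'$: it is a global $M$-invariant type, it extends $\tp(\bar a'/M)$, and since $p \supseteq \tp(\bar a/M\bar b)$ it also extends $\tp(\bar a'/M\bar b)$. So it remains to show that $p'$ is $\leq_K$-least among global $M$-invariant extensions of $\tp(\bar a'/M)$, which by definition gives $\bar a' \ind^{\leq_K}_M \bar b$.

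First I would record how the Kim-dividing class behaves under restriction of variables. For a formula $\phi(\bar x,\bar y')$ whose parameter sort matches $\bar a'$, let $\phi^+(\bar x,\bar y)$ denote the same formula read with a parameter tuple $\bar y=\bar y'\bar y''$ matching $\bar a$ (ignoring the extra coordinates $\bar y''$). The key observation is that the coordinatewise projection to the variables of $\bar a'$ of any Morley sequence of $p$ is a Morley sequence of $p'$, since $\bar a_i \models p|_{M\bar a_{<i}}$ forces $\bar a'_i \models p'|_{M\bar a'_{<i}}$, and a realisation of $p'|_M$ is the projection of a realisation of $p|_M$. Using the ``some $=$ any'' clause in the definition of $\cl_K$, this yields the equivalence $\phi \in \cl_K(p') \iff \phi^+ \in \cl_K(p)$, together with the analogous equivalence for any global $M$-invariant extension $q'$ of $\tp(\bar a'/M)$ and its lift $q$ (defined below).

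The main step, and where the hypothesis enters, is a lifting lemma: every global $M$-invariant extension $q'$ of $\tp(\bar a'/M)$ is the restriction to $\bar a'$ of some global $M$-invariant extension $q$ of $\tp(\bar a/M)$. To prove this I would pass to a larger monster $\M^+ \succ \M$ and realise $q'$ by some $\bar a'_* \models q'$, so that $\bar a'_* \ind^{\textnormal{i}}_M \M$. Extending $\bar a'_*$ to a tuple $\bar a_* = \bar a'_*\bar a''_* \equiv_M \bar a$ (possible since $\bar a'_* \equiv_M \bar a'$), I apply left extension of $\ind^{\textnormal{i}}$ to the independence $\bar a'_* \ind^{\textnormal{i}}_M \M$ and the inclusion $\bar a'_* \subseteq \bar a_*$, obtaining $\M' \equiv_{M\bar a'_*} \M$ with $\bar a_* \ind^{\textnormal{i}}_M \M'$. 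Moving $\M'$ back onto $\M$ by an automorphism of $\M^+$ fixing $M\bar a'_*$ and invoking invariance of $\ind^{\textnormal{i}}$, I get $\hat{\bar a} \equiv_M \bar a$ with $\hat{\bar a} \ind^{\textnormal{i}}_M \M$ and $\hat{\bar a}$ projecting to $\bar a'_*$; then $q := \tp(\hat{\bar a}/\M)$ is the desired lift. With the lifting lemma the conclusion is immediate: given any global $M$-invariant extension $q'$ of $\tp(\bar a'/M)$, lift it to $q$; since $p$ is $\leq_K$-least we have $\cl_K(p) \subseteq \cl_K(q)$, and transporting through the restriction equivalence gives $\cl_K(p') \subseteq \cl_K(q')$. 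As $q'$ was arbitrary, $p'$ is $\leq_K$-least.

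I expect the lifting lemma to be the only real obstacle. The delicate points are that left extension must be applied with $\M$ itself (enumerated as a parameter tuple) in the role of the right-hand set, which is what forces the passage to $\M^+$, and that the automorphism used to move $\M'$ back onto $\M$ fixes $\bar a'_*$, so that the projection of the lift is \emph{exactly} $q'$ and not merely some conjugate of it. The variable-restriction bookkeeping and the ``some $=$ any'' equivalence for $\cl_K$ are routine by comparison.
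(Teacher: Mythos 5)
Your proposal is correct and follows essentially the same route as the paper's proof: restrict the given $\leq_K$-least type to the subtuple's variables, transfer membership in $\cl_K$ back and forth via projections of Morley sequences, and use left extension of $\ind^{\textnormal{i}}$ to lift an arbitrary global $M$-invariant extension of $\tp(\bar a'/M)$ to a global $M$-invariant completion of $\tp(\bar a/M)$ together with it. Your ``lifting lemma'' (larger monster, realise $q'$, extend, apply left extension, pull back by an automorphism fixing $M\bar a'_*$) is precisely the step the paper compresses into the single phrase ``by left extension for $\ind^{\textnormal{i}}$, a global $M$-invariant completion of $p(\bar x,\bar y)\cup\Tilde r(\bar y)$ exists.''
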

\begin{proof}
    Given a type $p(\bar{x}, \bar{y}) \in S(M)$ and a global $M$-invariant extension $q(\bar{x}, \bar{y})$, let us write $\Tilde{p}(\bar{y})$ and $\Tilde{q}(\bar{y})$ for their respective restrictions to complete types in the variables $\bar{y}$. It suffices to show that, if $q(\bar{x},\bar{y})$ is a $\leq_K$-least extension of $p(\bar{x},\bar{y}) \in S(M)$, then $\Tilde{q}(\bar{y})$ is a $\leq_K$-least extension of $\Tilde{p}(\bar{y})$. 
    
    Suppose $\phi(\bar{z},\bar{y}) \in \cl_K(\Tilde{q}(\bar{y}))$. Then $\phi(\bar{z}, \bar{y}) \in \cl_K (q(\bar{x}, \bar{y}))$, and so, since $q(\bar{x}, \bar{y})$ is a $\leq_K$-least extension of $p(\bar{x}, \bar{y})$, it follows that $\phi(\bar{z},\bar{y}) \in \cl_K(r(\bar{x},\bar{y}))$ for all global $M$-invariant extensions $r(\bar{x},\bar{y})$ of $p(\bar{x},\bar{y})$. Now, for any global $M$-invariant extension $\Tilde{r}(\bar{y})$ of $\Tilde{p}(\bar{y})$, we can find, by left extension for $\ind^{\text{i}}$, a global $M$-invariant completion of $p(\bar{x},\bar{y}) \cup \Tilde{r}(\bar{y})$, and thus it follows that $\phi(\bar{z},\bar{y}) \in \cl_K(\Tilde{r}(\bar{y}))$. So $\Tilde{q}(\bar{y})$ is a $\leq_K$-least extension of $\Tilde{p}(\bar{y})$.
\end{proof}
Beyond satisfying the relative Kim's lemma, $\leq_K$-independence also provides a criterion for this property:
\begin{proposition} \label{prop:relative-kims-lemma-lascar-criterion}
    Suppose $\ind$ is an independence relation satisfying full existence. Then $\ind$ satisfies the relative Kim's lemma iff $\ind \implies \ind^{\leq_K}$.
\end{proposition}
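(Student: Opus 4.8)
The plan is to prove the two implications separately, in both cases routing through the reformulation of $\leq_K$-leastness recorded in Remark \ref{rem:lascar-basic-props}: a global type $q$ is $\ind^{\leq_K}$-free over $M$ if and only if $q$ is $\leq_K$-least over $q|_M$. This lets me recast the relative Kim's lemma as the assertion that every global type which is $\ind$-free over $M$ is also $\ind^{\leq_K}$-free over $M$, so that the whole proposition reduces to comparing the ``free'' types of the two relations $\ind$ and $\ind^{\leq_K}$.

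For the direction $(\Leftarrow)$, I would assume $\ind \implies \ind^{\leq_K}$ and take a global type $q$ that is $\ind$-free over $M$, aiming to show $q$ is $\leq_K$-least over $q|_M$. Fix an arbitrary $B \supseteq M$ and $\bar{a} \models q|_B$. Since $q$ is $\ind$-free over $M$, the definition gives $\bar{a} \ind_M B$, and hence $\bar{a} \ind^{\leq_K}_M B$ by hypothesis. As $B$ and $\bar{a}$ were arbitrary, $q$ is $\ind^{\leq_K}$-free over $M$, which by Remark \ref{rem:lascar-basic-props} is precisely the statement that $q$ is $\leq_K$-least over $q|_M$. This direction is purely formal and uses neither full existence nor any further property of $\ind$.

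For the direction $(\Rightarrow)$, I would assume $\ind$ satisfies the relative Kim's lemma and suppose $\bar{a} \ind_M \bar{b}$, the goal being $\bar{a} \ind^{\leq_K}_M \bar{b}$, i.e. that $\tp(\bar{a}/M\bar{b})$ extends to some $\leq_K$-least extension of $\tp(\bar{a}/M)$. The strategy is to produce a global type $q \supseteq \tp(\bar{a}/M\bar{b})$ that is $\ind$-free over $M$. Once this is achieved, the relative Kim's lemma applies directly: $q$ is then $M$-invariant (cf. Remark \ref{rem:rel-kim-lemma-and-invariance}) and $\leq_K$-least over $q|_M = \tp(\bar{a}/M)$, and since $q$ extends $\tp(\bar{a}/M\bar{b})$ this is exactly the conclusion $\bar{a} \ind^{\leq_K}_M \bar{b}$.

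The construction of $q$ is where full existence is invoked, and I expect it to be the main obstacle. The difficulty is that $q$ must do two things at once: extend the given type $\tp(\bar{a}/M\bar{b})$, and witness $\ind$-freeness over the base $M$, i.e. keep realisations $\ind$-independent over $M$ from arbitrarily large parameter sets. Full existence is precisely the hypothesis that allows one to realise a type over $M$ independently over $M$ from any prescribed set, and starting from $\bar{a} \ind_M \bar{b}$ I would build $q$ by exhausting $\M$ and repeatedly applying full existence, taking $q$ to be the resulting global type. The delicate point — and the part that needs the most care — is to carry out this construction so that the type over $M\bar{b}$ is genuinely preserved while the limiting type is checked to be $\ind$-free over $M$, rather than merely independent from each finite stage; this is the technical heart of the forward direction and the only place where the full strength of the existence hypothesis is needed.
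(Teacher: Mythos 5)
Your proposal follows the paper's proof step for step. The backward direction is the paper's argument verbatim: transfer $\ind$-freeness to $\ind^{\leq_K}$-freeness pointwise, then invoke Remark \ref{rem:lascar-basic-props} to convert $\ind^{\leq_K}$-freeness into $\leq_K$-leastness; this is correct, and you are right that it is purely formal. The forward direction's skeleton --- extend $\tp(\bar{a}/M\bar{b})$ to a global type $q$ which is $\ind$-free over $M$, then apply the relative Kim's lemma and the definition of $\ind^{\leq_K}$ --- is also exactly the paper's. The difference lies in what happens to the step you single out as ``the technical heart'': the paper's entire treatment of it is the sentence ``by full existence, $\tp(\bar{a}/M\bar{b})$ extends to a global $M$-$\ind$-free type $q$,'' with no construction given. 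So you have not missed any idea that the paper actually supplies.

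That said, judged as a standalone argument, your forward direction does have a gap, and your own hesitation locates it correctly: the exhaustion-plus-full-existence construction you sketch does not go through as described. Full existence only returns a conjugate of $\bar{a}$ over the base $M$, so at each stage of the exhaustion you lose control of the type over $M\bar{b}$ and over all earlier stages; the hypothesis $\bar{a} \ind_M \bar{b}$ cannot be propagated by full existence alone, and an invariant relation satisfying full existence need not admit such global free extensions in general. What the transfinite construction really needs is something like right extension for $\ind$ (from $\bar{a} \ind_M B$ and $B \subseteq B'$, find $\bar{a}' \equiv_{MB} \bar{a}$ with $\bar{a}' \ind_M B'$), together with care at limit stages --- or else the concrete form of the relation, which is how the paper handles its actual applications (e.g.\ Lemma \ref{lem:ind-hti-morley-seqs-exist} builds the global $\ind^{\hyp}$-free extension by hand). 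In short: your proposal is the paper's proof with its one substantive step left open, but the paper leaves that step equally open, concealed behind the words ``by full existence.''
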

\begin{proof}
    ($\Rightarrow$) Suppose $\bar{a} \ind_M \bar{b}$. By full existence, $\tp(\bar{a}/M\bar{b})$ extends to a global $M$-$\ind$-free type $q$ which, by the relative Kim's lemma, is $\leq_K$-least. So $\bar{a} \ind_M^{\leq_K} \bar{b}$.

    ($\Leftarrow$) Suppose $q$ is a global type that is $\ind$-free over $M \models T$. Since $\ind \implies \ind^{\leq_K}$, it follows that $q$ is also $\ind^{\leq_K}$-free over $M$, and thus, by Remark \ref{rem:lascar-basic-props}, $q$ is a $\leq_K$-least extension of $q|_M$. 
\end{proof}
\subsection{A criterion for the strong witnessing property}
The following property was introduced by Mutchnik in \cite[Definition 3.23]{mutchnik2022conantindependence}:
\begin{definition}
    Let $T$ be a complete theory. 
    \begin{enumerate}[(i)]
        \item We say a global type $q$ is a \textbf{strong witnessing extension} of $q|_M$ for $M \models T$ if, for all $\bar{a} \models q|_{M\bar{b}}$, and all $\bar{c}$, there is some $\bar{c}' \equiv_{M\bar{a}} \bar{c}$ such that $\tp(\bar{a} \bar{c}'/M\bar{b})$ extends to a $\leq_K$-least extension of $\tp(\bar{a}\bar{c}/M)$.
        \item We say $T$ has the \textbf{strong witnessing property} if, for all small $M \models T$, every $p \in S(M)$ has a global strong witnessing extension. 
    \end{enumerate}
\end{definition}
There are no known $\NSOP_4$ counterexamples to the above property. The main application is given in \cite[Theorem 3.25]{mutchnik2022conantindependence}, where it is shown that any theory with the strong witnessing property is either simple or $\TP_2$.

Let us introduce the following operation: given an independence relation $\ind$, we define
\begin{equation*}
    A \ind_C^{\textnormal{le}} B \iff \text{for all } D \supseteq A, \text{ there exists } B' \equiv_{AC} B \text{ with } D \ind_C B'.
\end{equation*}
Some versions of this operation have previously appeared in the literature, although not at this level of generality (e.g., \cite{dobrowolski2022kim}). It is immediate from this definition that:
\begin{itemize}
    \item $\ind^{\text{le}} \implies \ind$.
    \item If $\ind^0 \implies \ind$ and $\ind^0$ satisfies left extension, then $\ind^0 \implies \ind^{\text{le}}$.
    \item $\ind^{\textnormal{le}} = \Big(\Big(\ind^{\textnormal{opp}}\Big)^*\Big)^{\textnormal{opp}}$.
\end{itemize}
In particular, combining the above properties with Fact \ref{fact:ind-star-props} and Remark \ref{rem:ind-opp-props}, it follows that, whenever $\ind$ satisfies left monotonicity, $\ind^{\textnormal{le}}$ satisfies left monotonicity and left extension, and if in addition $\ind$ satisfies right monotonicity, so does $\ind^{\textnormal{le}}$. 
\begin{lemma} \label{lem:strong-witnessing-is-lind-le}
    Let $M \models T$, and let $q$ be a global type. The following are equivalent:
    \begin{enumerate}[(i)]
        \item $q$ is a strong witnessing extension of $q|_M$.
        \item $q$ is $(\ind^{\leq_K})^{\textnormal{le}}$-free over $M$.
    \end{enumerate}
\end{lemma}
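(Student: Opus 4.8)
The plan is to unfold both conditions into assertions about $\ind^{\leq_K}$ and recognise that they coincide up to which auxiliary tuple is perturbed; the two perturbations are interchanged by an automorphism fixing $M\bar{a}$, using that $\ind^{\leq_K}$ is $\Aut(\M)$-invariant (the corollary following Lemma \ref{lem:invariance-for-lascar}). The whole argument is a symmetric two-way bookkeeping exercise rather than anything deep, so I would set up the dictionary between the two definitions first and then run the automorphism argument in both directions.

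First I would rewrite the strong witnessing condition (i) purely in terms of $\ind^{\leq_K}$. Given $\bar{a} \models q|_{M\bar{b}}$ and a tuple $\bar{c}$, since any $\bar{c}' \equiv_{M\bar{a}} \bar{c}$ satisfies $\tp(\bar{a}\bar{c}'/M) = \tp(\bar{a}\bar{c}/M)$, the requirement that $\tp(\bar{a}\bar{c}'/M\bar{b})$ extend to a $\leq_K$-least extension of $\tp(\bar{a}\bar{c}/M)$ is literally the statement $\bar{a}\bar{c}' \ind^{\leq_K}_M \bar{b}$. Thus (i) reads: for every $\bar{b}$, every $\bar{a} \models q|_{M\bar{b}}$, and every $\bar{c}$, there is $\bar{c}' \equiv_{M\bar{a}} \bar{c}$ with $\bar{a}\bar{c}' \ind^{\leq_K}_M \bar{b}$.

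Next I would unfold (ii). Writing $B = M\bar{b}$ (so $\bar{a}\models q|_B$ matches $\bar{a}\models q|_{M\bar{b}}$) and $D = \bar{a}\bar{c}$, the definition of $(\ind^{\leq_K})^{\textnormal{le}}$-freeness of $q$ over $M$ says exactly that there is $B' \equiv_{M\bar{a}} M\bar{b}$ with $\bar{a}\bar{c} \ind^{\leq_K}_M B'$; since the automorphism witnessing $B' \equiv_{M\bar{a}} M\bar{b}$ fixes $M$ pointwise, $B'$ has the form $M\bar{b}'$ with $\bar{b}' \equiv_{M\bar{a}} \bar{b}$. So (ii) reads: there is $\bar{b}' \equiv_{M\bar{a}} \bar{b}$ with $\bar{a}\bar{c} \ind^{\leq_K}_M \bar{b}'$. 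The only formal difference from (i) is that (i) keeps $\bar{b}$ fixed and moves $\bar{c}$, whereas (ii) keeps $\bar{c}$ fixed and moves $\bar{b}$.

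To close the loop I would apply an automorphism $\sigma \in \Aut(\M/M\bar{a})$. For (ii)$\Rightarrow$(i), choose $\sigma$ with $\sigma(\bar{b}') = \bar{b}$ and put $\bar{c}' := \sigma(\bar{c})$: invariance of $\ind^{\leq_K}$ converts $\bar{a}\bar{c} \ind^{\leq_K}_M \bar{b}'$ into $\bar{a}\bar{c}' \ind^{\leq_K}_M \bar{b}$, while $\sigma$ fixing $M\bar{a}$ gives $\bar{c}' \equiv_{M\bar{a}} \bar{c}$. For (i)$\Rightarrow$(ii) I would run the mirror image, choosing $\sigma$ with $\sigma(\bar{c}') = \bar{c}$ and setting $\bar{b}' := \sigma(\bar{b})$. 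I expect the only point needing care to be the translation between the set-level data $(B,D,B')$ of the \textnormal{le}-operation and the tuple-level data $(\bar{b},\bar{c},\bar{c}')$ of the strong witnessing property---in particular verifying that a set $B' \equiv_{M\bar{a}} M\bar{b}$ is genuinely of the form $M\bar{b}'$ with $M$ pointwise fixed, and that any $D \supseteq \bar{a}$ may be written as $\bar{a}\bar{c}$---after which the equivalence follows immediately from $\Aut(\M)$-invariance.
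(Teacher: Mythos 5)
Your proposal is correct and follows essentially the same route as the paper's proof: both unfold the strong witnessing property and $(\ind^{\leq_K})^{\textnormal{le}}$-freeness into the two statements ``fix $\bar{b}$, perturb $\bar{c}$'' and ``fix $\bar{c}$, perturb $\bar{b}$,'' and then pass between them via the $\Aut(\M)$-invariance of $\ind^{\leq_K}$ applied to an automorphism fixing $M\bar{a}$ pointwise. The paper compresses this into the phrase ``by invariance'' in each direction, whereas you make the bookkeeping (sets versus tuples, and the explicit choice of $\sigma \in \Aut(\M/M\bar{a})$) fully explicit, but the mathematical content is identical.
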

\begin{proof}
    ((i) $\Rightarrow$ (ii)) Suppose $q$ is a strong witnessing extension of $q|_M$. Let $\bar{a} \models q|_{M\bar{b}}$. Take some $\bar{c}$. Then, by strong witnessing, there is $\bar{c}' \equiv_{M\bar{a}} \bar{c}$ such that $\bar{a}\bar{c}' \ind_M^{\leq_K} \bar{b}$, and thus, by invariance, there is $\bar{b}' \equiv_{M\bar{a}} \bar{b}$ such that $\bar{a}\bar{c} \ind_M^{\leq_K} \bar{b}'$. Therefore, we have $\bar{a} \: (\ind_M^{\leq_K})^{\text{le}} \: \bar{b}$, as required.
    
    ((ii) $\Rightarrow$ (i)) Suppose $q$ is $(\ind^{\leq_K})^{\text{le}}$-free over $M$. Take $\bar{a} \models q|_{M\bar{b}}$, and some $\bar{c}$. Then, by assumption and invariance, there is $\bar{c}' \equiv_{M\bar{a}} \bar{c}$ such that $\bar{a}\bar{c}' \ind^{\leq_K}_M \bar{b}$. Therefore, $q$ is a strong witnessing extension of $q|_M$, as required.
\end{proof}
\begin{theorem}
    If there is an independence relation $\ind$ over models of $T$ satisfying full existence, left extension, and the relative Kim's lemma, then $T$ satisfies the strong witnessing property. 
    
    Moreover, if, in addition, $\ind^{\textnormal{i}}$ satisfies left extension over models in $T$, then the converse holds. 
\end{theorem}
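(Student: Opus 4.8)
The plan is to route both directions through the relation $\ind^{\leq_K}$ and its left-extension closure $(\ind^{\leq_K})^{\textnormal{le}}$, using Lemma~\ref{lem:strong-witnessing-is-lind-le} to recast the strong witnessing property as the existence, for every $M \models T$ and every $p \in S(M)$, of a global $(\ind^{\leq_K})^{\textnormal{le}}$-free extension of $p$.

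\emph{Forward direction.} Assume $\ind$ satisfies full existence, left extension, and the relative Kim's lemma. First I would record that $\ind \implies \ind^{\leq_K}$; this is exactly Proposition~\ref{prop:relative-kims-lemma-lascar-criterion}, whose hypotheses (full existence and the relative Kim's lemma) hold. Next, since $\ind$ satisfies left extension, the second bullet following the definition of the $\textnormal{le}$ operation, applied with $\ind^0 = \ind$ and with the bullet's base relation taken to be $\ind^{\leq_K}$, upgrades this to $\ind \implies (\ind^{\leq_K})^{\textnormal{le}}$; this is the unique place where the left extension hypothesis enters. Finally, full existence furnishes, for each $p \in S(M)$, a global $\ind$-free extension $q$ of $p$ (just as, for $\ind^{\textnormal{i}}$, the existence axiom yields global invariant extensions). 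Since any global type that is $\ind$-free over $M$ is automatically $(\ind^{\leq_K})^{\textnormal{le}}$-free over $M$ once $\ind \implies (\ind^{\leq_K})^{\textnormal{le}}$, the type $q$ is a strong witnessing extension of $p$ by Lemma~\ref{lem:strong-witnessing-is-lind-le}. As $p$ was arbitrary, $T$ has the strong witnessing property.

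\emph{Converse direction.} Now additionally assume $\ind^{\textnormal{i}}$ satisfies left extension over models and that $T$ has the strong witnessing property; the plan is to exhibit $\ind := (\ind^{\leq_K})^{\textnormal{le}}$ as the required relation. By Lemma~\ref{lem:lascar-left-mon}, the extra hypothesis gives left monotonicity of $\ind^{\leq_K}$, and $\ind^{\leq_K}$ always has right monotonicity by Remark~\ref{rem:lascar-basic-props}; so, through the identity $\ind^{\textnormal{le}} = ((\ind^{\textnormal{opp}})^{*})^{\textnormal{opp}}$ together with Fact~\ref{fact:ind-star-props} and Remark~\ref{rem:ind-opp-props}, the relation $\ind$ satisfies left extension, left monotonicity, and right monotonicity. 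For full existence I would take $p \in S(M)$, invoke the strong witnessing property and Lemma~\ref{lem:strong-witnessing-is-lind-le} to obtain a global $\ind$-free extension $q$ of $p$, and then realize $q|_{MB}$: the realization is $M$-conjugate to the given tuple and independent from $MB$ over $M$, hence from $B$ over $M$ by right monotonicity. Finally, $\ind = (\ind^{\leq_K})^{\textnormal{le}} \implies \ind^{\leq_K}$ by the first bullet of the $\textnormal{le}$ operation, so Proposition~\ref{prop:relative-kims-lemma-lascar-criterion}, now applicable since $\ind$ has full existence, yields the relative Kim's lemma for $\ind$.

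\emph{The main obstacle.} The delicate point is the passage from full existence to global free types in the forward direction: full existence is a statement about one parameter set at a time, whereas $\ind$-freeness is a global condition, so one must verify that these local free realizations cohere into a single global extension (the model case for $\ind^{\textnormal{i}}$ being the passage from the existence axiom to global invariant types). In the converse, the analogous content is precisely the derivation of full existence for $(\ind^{\leq_K})^{\textnormal{le}}$ from the strong witnessing property, where right monotonicity — available only because of the extra hypothesis on $\ind^{\textnormal{i}}$ — is exactly what lets one descend from $q|_{MB}$ to independence over the smaller set $B$, explaining why that hypothesis is needed for the converse but not the forward implication.
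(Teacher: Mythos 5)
Your proof is correct and takes essentially the same route as the paper's: both directions go through Lemma \ref{lem:strong-witnessing-is-lind-le}, with the forward implication obtained from $\ind \implies \ind^{\leq_K}$ (Proposition \ref{prop:relative-kims-lemma-lascar-criterion}) upgraded to $\ind \implies (\ind^{\leq_K})^{\textnormal{le}}$ via left extension, and the converse obtained by exhibiting $(\ind^{\leq_K})^{\textnormal{le}}$ itself as the required relation, using Lemma \ref{lem:lascar-left-mon} for left extension and Proposition \ref{prop:relative-kims-lemma-lascar-criterion} again for the relative Kim's lemma. The only difference is that you make explicit the derivation of full existence from the existence of global free extensions (via right monotonicity of $(\ind^{\leq_K})^{\textnormal{le}}$), a step the paper leaves implicit.
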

\begin{proof}
    Suppose that such an independence relation $\ind$ exists. By Proposition \ref{prop:relative-kims-lemma-lascar-criterion}, we have $\ind \implies \ind^{\leq_K}$, and since $\ind$ satisfies left extension, we conclude that $\ind \implies (\ind^{\leq_K})^\text{le}$. But now, since $\ind$ satisfies full existence, every type $p \in S(M)$ has a global $M$-$\ind$-free extension, which by the above is also $(\ind^{\leq_K})^{\text{le}}$-free over $M$. Thus, by Lemma \ref{lem:strong-witnessing-is-lind-le}, $T$ satisfies the strong witnessing property. 

    For the `moreover' part, suppose that $T$ satisfies the strong witnessing property. In particular, by Lemma \ref{lem:strong-witnessing-is-lind-le}, this entails that every type $p \in S(M)$ has a global $M$-$(\ind^{\leq_K})^{\text{le}}$-free extension, which implies that $(\ind^{\leq_K})^\text{le}$ satisfies full existence. In particular, by Proposition \ref{prop:relative-kims-lemma-lascar-criterion}, $(\ind^{\leq_K})^{\text{le}}$ satisfies the relative Kim's lemma. Finally, as $\ind^{\text{i}}$ satisfies left extension by assumption, $\ind^{\leq_K}$ satisfies left monotonicity by Lemma \ref{lem:lascar-left-mon}, and thus $(\ind^{\leq_K})^{\text{le}}$ satisfies left extension. 
\end{proof}
\begin{corollary} \label{cor:simple-or-tp2}
    Suppose there is an independence relation $\ind$ over models satisfying full existence, left extension, and the relative Kim's lemma. Then $T$ is either simple or $\TP_2$.
\end{corollary}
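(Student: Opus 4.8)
The plan is to deduce this immediately from the preceding theorem together with Mutchnik's structural dichotomy for the strong witnessing property. The three hypotheses imposed on $\ind$ here — full existence, left extension, and the relative Kim's lemma — are precisely those appearing in the hypothesis of the first part of the theorem just proved. I would therefore begin by observing that that theorem applies verbatim, yielding that $T$ satisfies the strong witnessing property. No additional setup is required, since the assumptions of the corollary are a literal restatement of the theorem's hypotheses (the ``moreover'' clause of that theorem, which needs left extension for $\ind^{\textnormal{i}}$, is not invoked at all).

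Once the strong witnessing property is established, I would invoke \cite[Theorem 3.25]{mutchnik2022conantindependence}, which asserts that any complete theory with the strong witnessing property is either simple or $\TP_2$. Applying this to our $T$ gives the dichotomy directly, completing the argument.

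There is essentially no substantive obstacle in the corollary itself: all of the mathematical content is carried by the preceding theorem (whose proof routes through Lemma \ref{lem:strong-witnessing-is-lind-le} and the derived operation $(\ind^{\leq_K})^{\textnormal{le}}$) and by the cited result of Mutchnik. The only point deserving a moment's attention is to confirm that the notion of strong witnessing property used above is exactly the one appearing in \cite{mutchnik2022conantindependence}, which it is by definition, so that the cited theorem may be applied without any reformulation.
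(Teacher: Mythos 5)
Your proposal is correct and matches the paper's argument exactly: the paper's proof of this corollary is simply the observation that it is a reformulation of \cite[Theorem 3.25]{mutchnik2022conantindependence}, with the preceding theorem (under precisely the corollary's hypotheses, no ``moreover'' clause needed) supplying the strong witnessing property. Your write-up just makes this two-step composition explicit.
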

\begin{proof}
    This is just a reformulation of \cite[Theorem 3.25]{mutchnik2022conantindependence}.
\end{proof}
\section{Second proof of \texorpdfstring{$\NSOP_4$}{NSOP4}} \label{sec:second-proof-of-nsop4}
\subsection{Triviality of Conant-independence} Our second proof of Theorem \ref{thm:main} goes via properties of independence relations, using several results from \cite{mutchnik2022conantindependence}. First, we recall the key result we employ in this section:
\begin{definition}
    Let $T$ be a complete theory and $M \models T$. 
    \begin{enumerate}[(i)]
        \item We say $\phi(\bar{x}, \bar{b})$ \textbf{Conant-divides over $M$} if, for all $M$-invariant Morley sequences $(\bar{b}_i)_{i \in \omega}$ with $\bar{b}_0 = \bar{b}$, the set $\{\phi(\bar{x}, \bar{b}_i) : i \in \omega\}$ is inconsistent. 
        \item We say $\phi(x, b)$ \textbf{Conant-forks over $M$} if there exist formulas $\psi_i(x, c_i)$ for $i < n$ such that $\phi(x, b) \vdash \bigvee_{i < n} \psi_i(x, c_i)$ and each $\psi_i(x, c_i)$ Conant-divides over $M$.
        \item We say $\bar{a}$ is \textbf{Conant-independent} from $\bar{b}$ over $M$ if $\tp(\bar{a}/M\bar{b})$ does not contain a formula that Conant-forks over $M$.
    \end{enumerate}
\end{definition}
\begin{remark} \label{rem:h4-conant-fork-eq-conant-div}
    In $T_{\mathbf{H}_4\text{-free}}$, since $\ind^{\textnormal{i}}$ satisfies left extension over models by Corollary \ref{cor:left-extension}, we have that $\phi(\bar{x}, \bar{b})$ Conant-forks over $M$ iff it Conant-divides over $M$. The proof of this fact is exactly the same as that of \cite[Proposition 5.2]{mutchnik2023nsop2} after replacing all instances of ``coheir Morley sequence over $M$'' by ``$M$-invariant Morley sequence,'' since left extension is the main property of coheir extensions used there. (Note that Conant-dividing has a different definition in \cite{mutchnik2023nsop2} than the one we are using in this document.)
\end{remark}
\begin{fact}[\protect{\cite[Theorem 6.2]{mutchnik2022conantindependence}}] \label{fact:symmetric-conant-independence}
    If Conant-independence is symmetric, then $T$ is $\NSOP_4$.
\end{fact}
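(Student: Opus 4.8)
The plan is to derive $\NSOP_4$ from the same criterion used in the first proof (Proposition \ref{prop:h4-free-is-nsop4}): a theory is $\NSOP_4$ if and only if for every model $M$ and every $M$-indiscernible sequence $(\bar b_i)_{i \in \omega}$, writing $p(\bar x, \bar y) := \tp(\bar b_0 \bar b_1 / M)$, the ``$4$-cycle''
\[
    p(\bar x_0, \bar x_1) \wedge p(\bar x_1, \bar x_2) \wedge p(\bar x_2, \bar x_3) \wedge p(\bar x_3, \bar x_0)
\]
is consistent. (If $T$ were $\SOP_4$, an application of Ramsey and compactness to an $\SOP_4$-witnessing formula produces such a sequence with this $4$-cycle inconsistent.) By a standard extraction I may assume $(\bar b_i)$ is a Morley sequence in a global $M$-invariant type, at the cost of enlarging the base to the model $M$; since this only enlarges $p$, it suffices to realise the $4$-cycle of $p$. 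Reindexing by $\Z$, I also arrange that the sequence has predecessors and successors. Write $\ind^K$ for Conant-independence. The aim is to realise the $4$-cycle explicitly, using $\bar b_0, \bar b_1, \bar b_2$ as three of the four vertices and amalgamating a fourth.

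First I would record the independences supplied by the Morley sequence together with the hypothesis. From $\bar b_j \ind^{\textnormal{i}}_M \bar b_{<j}$ and the implication $\ind^{\textnormal{i}} \implies \ind^K$ one obtains $\bar b_2 \ind^K_M \bar b_0$, $\bar b_0 \ind^K_M \bar b_{-1}$, and $\bar b_3 \ind^K_M \bar b_2$. Here the symmetry of Conant-independence is used to flip the first two into $\bar b_0 \ind^K_M \bar b_2$ and $\bar b_{-1} \ind^K_M \bar b_0$. By indiscernibility, $\bar b_{-1} \equiv_M \bar b_3$ (both realise $\tp(\bar b_0/M)$), $\bar b_{-1}$ is a $p$-predecessor of $\bar b_0$ (that is, $\bar b_{-1} \bar b_0 \equiv_M \bar b_0 \bar b_1$), and $\bar b_3$ is a $p$-successor of $\bar b_2$ (that is, $\bar b_2 \bar b_3 \equiv_M \bar b_0 \bar b_1$).

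The key step is then an independence-theorem ($3$-amalgamation) argument for $\ind^K$ over the model $M$: with $\bar b_0 \ind^K_M \bar b_2$ as the base independence and the two sides $\bar b_{-1} \ind^K_M \bar b_0$ and $\bar b_3 \ind^K_M \bar b_2$ satisfying $\bar b_{-1} \equiv_M \bar b_3$, I would amalgamate to obtain a single $\bar c_3$ with $\bar c_3 \equiv_{M\bar b_0} \bar b_{-1}$ and $\bar c_3 \equiv_{M\bar b_2} \bar b_3$. The first equivalence gives $(\bar c_3, \bar b_0) \models p$ and the second gives $(\bar b_2, \bar c_3) \models p$; together with $(\bar b_0, \bar b_1), (\bar b_1, \bar b_2) \models p$, the tuple $(\bar b_0, \bar b_1, \bar b_2, \bar c_3)$ realises the $4$-cycle, so $T$ is $\NSOP_4$.

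The main obstacle is exactly the amalgamation invoked in the last step: one must show that the symmetry of Conant-independence is strong enough to furnish an independence theorem for $\ind^K$ over models. In the $\NSOP_1$ and in the stationary settings the corresponding amalgamation is produced by Kim's lemma or by stationarity, neither of which is available here; extracting a genuine $3$-amalgamation statement for Conant-independence purely from its symmetry, \emph{without} stationarity, is the substantial content of \cite[Theorem 6.2]{mutchnik2022conantindependence}, while everything else is the standard $\NSOP_4$-from-amalgamation bookkeeping together with the orientation of the Morley independences via symmetry.
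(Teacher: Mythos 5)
This statement is imported by the paper as a Fact, quoted from \cite[Theorem 6.2]{mutchnik2022conantindependence}; the paper gives no proof of it, so the only question is whether your argument actually constitutes one. It does not: the single substantive step --- the $3$-amalgamation producing $\bar{c}_3$ with $\bar{c}_3 \equiv_{M\bar{b}_0} \bar{b}_{-1}$ and $\bar{c}_3 \equiv_{M\bar{b}_2} \bar{b}_3$ --- is justified only by appeal to the very theorem being proved, as you yourself acknowledge in the final paragraph. Note that in the configuration you set up (all tuples lying on one $M$-invariant Morley sequence), the existence of $\bar{c}_3$ is \emph{literally} the consistency of $p(\bar{x}_2,\bar{x}_3) \wedge p(\bar{x}_3,\bar{x}_0)$ given the already-realised half of the cycle, i.e.\ it is the $\NSOP_4$ criterion itself, not an auxiliary lemma from which the criterion follows. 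So the proposal reduces the statement to an instance of itself; everything that is actually proved (the $4$-cycle criterion, extraction of an $M$-invariant Morley sequence, $\ind^{\textnormal{i}} \implies$ Conant-independence, using symmetry to reorient the Morley independences) is the routine bookkeeping surrounding the gap.

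Worse, the step in the generality you state it --- ``symmetry of Conant-independence furnishes an independence theorem for it over models'' --- is not merely unproven but false, and the theory studied in this paper is a counterexample. In $T_{\mathbf{H}_4\textnormal{-free}}$, Conant-independence over a model $M$ is exactly disjointness over $M$ (this is the content of the second proof of Proposition \ref{prop:h4-free-is-nsop4}); it is invariant, symmetric, monotone, and satisfies full existence and strong finite character. If it additionally satisfied $3$-amalgamation over models, then the standard amalgamation criterion for $\NSOP_1$ (due to Chernikov--Ramsey and Kaplan--Ramsey; cf.\ \cite{kaplan2020kimindependence}) would force $T_{\mathbf{H}_4\textnormal{-free}}$ to be $\NSOP_1$, contradicting Proposition \ref{prop:h4-free-is-sop3}. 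Concretely, the $2$-inconsistent family of isolated types built in the proof of Proposition \ref{prop:h4-free-is-tp2} (see also Remark \ref{rem:counterexample-to-claim}) exhibits the failure of disjoint amalgamation here. This is precisely why Mutchnik's argument cannot, and does not, pass through a positive amalgamation statement: it proceeds by contraposition, extracting a failure of symmetry of Conant-independence from an $\SOP_4$ configuration. A correct blind proof would have to follow that route (or something genuinely new), not the $\NSOP_1$-style independence-theorem template.
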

Now work in $\M \models T_{\mathbf{H}_4\text{-free}}$. Let us define an independence relation $\ind^\hyp$ over subsets of $\M$ as follows: $A \ind^\hyp_C B$ iff $A \cap B \subseteq C$ and $\models R(a, c, b)$ for all $a \in A \setminus C$, $c \in C$, and $b \in B \setminus C$.
\begin{lemma}
    $\ind^\hyp$ is an independence relation satisfying monotonicity, full existence, and left extension. 
\end{lemma}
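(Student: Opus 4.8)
The plan is to check the four properties in increasing order of difficulty, reducing the two substantive ones (full existence and left extension) to the construction of a quantifier-free type containing no $\mathbf{H}_4$, which by quantifier elimination (Fact \ref{fact:fraisse-limit-omega-cat}) and strong amalgamation (Lemma \ref{lem:h4-free-is-strong-amalgamation-class}) is then realised in $\M$. \emph{Invariance} is immediate, since automorphisms of $\M$ preserve $R$ as well as $\in$, $\cap$ and $\subseteq$, hence preserve the defining condition of $\ind^\hyp$. \emph{Monotonicity} is equally direct: if $A'\subseteq A$ then $A'\cap B\subseteq A\cap B\subseteq C$ and $A'\setminus C\subseteq A\setminus C$, so both clauses defining $A\ind^\hyp_C B$ only weaken when passing to $A'$, and the right-hand side is symmetric.

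For \emph{full existence}, given $\bar{a}$ and sets $B,C$ I would produce $\bar{a}'\equiv_C\bar{a}$ with $\bar{a}'\ind^\hyp_C B$ as follows. If $C=\varnothing$ the $R$-clause is vacuous, and it suffices to realise $\tp(\bar{a})$ disjointly from $B$, which is possible by strong amalgamation. If $C\neq\varnothing$, fix $c_0\in C$ and describe the quantifier-free type of $\bar{a}'CB$ by keeping $\tp(\bar{a}'/C)=\tp(\bar{a}/C)$ and $\tp(B/C)$, imposing $R(a'_i,c,b)$ for all $i$, all $c\in C$ and all $b\in B\setminus C$, and filling in the remaining cross-triples by copying $c_0$, i.e. $R(a'_i,a'_j,b)\leftrightarrow R(a'_i,a'_j,c_0)$ and $R(a'_i,b,b')\leftrightarrow R(c_0,b,b')$. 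The point is that this is $\mathbf{H}_4$-free, which I would verify via Remark \ref{rem:h4-free-criterion} according to the shape of a putative bad $4$-set. Whenever such a set contains some $c\in C$, two of the imposed relations $R(a'_i,c,b)=1$ share an ordered pair after a cyclic rotation (e.g. $(c,b)$ for $\{a'_i,a'_j,c,b\}$, or $(a'_i,c)$ for $\{a'_i,c,b,b'\}$), providing the pivot. The sets containing no point of $C$ are handled by the copying: $\{a'_i,a'_j,a'_k,b\}$ is carried by $b\mapsto c_0$ to a $4$-subset of $\bar{a}'C\equiv\bar{a}C$, and $\{a'_i,b,b',b''\}$ by $a'_i\mapsto c_0$ to a $4$-subset of $CB$, both already $\mathbf{H}_4$-free; for $\{a'_i,a'_j,b,b'\}$ the copying forces $R(a'_i,b,b')\leftrightarrow R(a'_j,b,b')$, so a pivot at $(b,b')$ or, using axiom (iii), at $(b',b)$ applies. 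Finally, the imposed relations force $a'_i\neq b$, so $\bar{a}'\cap B\subseteq C$ as required.

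\emph{Left extension} does not come for free, since $\ind^\hyp$ is not symmetric (axiom (iii) gives $R(a,c,b)\leftrightarrow\neg R(b,c,a)$), so I would establish it by an analogous construction on the other side. Given $A\ind^\hyp_C B$ and $A\subseteq A'$, I would build $B'\equiv_{AC}B$ with $A'\ind^\hyp_C B'$ by describing the quantifier-free type of $A'CB'$: keeping the structure on $A'C$, setting $\tp(B'/AC)=\tp(B/AC)$ (which already forces $R(a,c,b')=1$ for the old left points $a\in A\setminus C$), imposing $R(d,c,b')=1$ for the new left points $d\in A'\setminus(A\cup C)$, and again copying $c_0\in C$ for the remaining cross-triples. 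The verification that no $\mathbf{H}_4$ arises is the same pivot-or-copying analysis, with the forced relations $R(d,c,b')=1$ supplying the pivot for $4$-sets meeting $C$ and the copying reducing the others to configurations inside $A'C$ or $AC\cup B'$.

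I expect the main obstacle to be the full-existence case analysis, and specifically the $4$-sets built only from new points $a'$ and right points $b$, where there is no base point of $C$ available as a pivot; the copying-from-$c_0$ device is precisely what reduces these to configurations already known to be $\mathbf{H}_4$-free, and the one genuinely delicate point is to confirm that this copying never clashes with the forced relations $R(a'_i,c,b)=1$ (it does not, since those triples have exactly one point in each of $\bar{a}'$, $C$, $B\setminus C$, a shape disjoint from the copied triples).
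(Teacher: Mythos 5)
Your treatment of invariance, monotonicity, and full existence is correct, and the full-existence construction (impose $R(a'_i,c,b)$ across the base and copy all remaining cross-triples from a fixed $c_0\in C$) is exactly in the spirit of the consistency arguments the paper relies on. The gap is in left extension, and it is a genuine one, not a matter of detail. In full existence every left-hand point is new, so \emph{every} triple meeting both the left side and $B$ is assigned by your imposition or your copying, which is why the maps $b\mapsto c_0$ and $a'_i\mapsto c_0$ really are isomorphisms onto subsets of $\bar{a}'C$ and $CB$. In left extension this is no longer true: the old points $a\in A\setminus C$ have their relations to $B'$ already fixed by the requirement $\tp(B'/AC)=\tp(B/AC)$. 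A $4$-set $\{d,a,a',b'\}$ with $d\in A'\setminus(AC)$ new, $a,a'\in A\setminus C$ old and $b'\in B'\setminus C$ lies in neither $A'C$ nor $ACB'$, and its triples come from both sources: $(a,a',b')$ from the transferred type, $(d,a,b')$ and $(d,a',b')$ from the copying. The map $b'\mapsto c_0$ is an isomorphism onto $\{d,a,a',c_0\}\subseteq A'C$ only if $R(a,a',b')\leftrightarrow R(a,a',c_0)$, which nothing guarantees; so your claimed reduction ``to configurations inside $A'C$ or $AC\cup B'$'' misses precisely this shape, and your final ``delicate point'' check addresses the wrong clash (the copying is consistent with the forced relations, but not with the transferred type).

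The construction in fact fails. Let $C=\{c_0\}$, $A=\{a,a'\}$, $B=\{b\}$, $A'=AC\cup\{d\}$, with the ten triples on $\{a,a',c_0,d,b\}$ oriented so that $R(a,c_0,b)$, $R(a',c_0,b)$, $R(a,a',b)$, $R(c_0,a',a)$, $R(d,a,c_0)$, $R(c_0,a',d)$, $R(a',a,d)$, $R(d,c_0,b)$, $R(d,a,b)$, $R(d,a',b)$ all hold. Each of the five $4$-subsets contains two triples sharing a cyclic ordered pair (for instance $R(a,c_0,b)$ and $R(a',c_0,b)$ share $(c_0,b)$), so this structure is $\mathbf{H}_4$-free and embeds in $\M$, and clearly $A\ind^\hyp_C B$. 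Your recipe then forces $R(a,a',b')$ (transferred from $R(a,a',b)$), $R(d,a,b')$ (copied from $R(d,a,c_0)$), and $\neg R(d,a',b')$, i.e.\ $R(b',a',d)$ (copied from $\neg R(d,a',c_0)$). Setting $(p,q,r,s)=(a',d,b',a)$, the four triples on $\{d,a,a',b'\}$ are exactly $R(p,q,r)\wedge R(p,r,s)\wedge R(p,s,q)\wedge R(q,s,r)$, the defining pattern of $\mathbf{H}_4$ from Remark \ref{rem:h4-free-criterion}; so the partial type you wrote down is inconsistent. The repair is to abandon copying on the mixed triples and give them a uniform orientation: impose $R(d,a,b')$ for all new $d$, old $a$, and $b'\in B'\setminus C$. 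Then $(d,a,b')$ and $(d,a',b')$ share $(b',d)$, $(d,a,b')$ and $(d',a,b')$ share $(a,b')$, and $(d,a,b')$ and $(d,a,b'')$ share $(d,a)$, which together with your other clauses disposes of every mixed $4$-set. (This is the analogue of what the paper does in Lemma \ref{lem:hti-props}, where the new points are made to imitate base elements in their relations to $B'$.) Two smaller points: you should also treat the case $C=\varnothing$ of left extension, where the claim reduces to finding $B'\equiv_A B$ disjoint from $A'$, available by strong amalgamation; and the imposed and copied clauses should be restricted to coordinates of the tuples lying outside $C$.
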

\begin{proof}
    Invariance and monotonicity are immediate. Full existence and left extension follow similar proofs to those of consistency we have done before. 
\end{proof}
\begin{remark}
    $\ind^\hyp$ does not satisfy stationarity (over arbitrary subsets) nor symmetry. In fact, if $A \ind^\hyp_C B$ and $\varnothing \neq C \subset A,B$, then, for all $a \in A\setminus C$, $c \in C$, and $b \in B \setminus C$, $\models R(a, c, b)$, and thus, $\not\models R(b, c, a)$. Hence, $B \nind^\hyp_C A$. 

    So $\ind^\hyp$ is neither a stationary weak independence relation in the sense of \cite{li2019automorphism} nor a free amalgamation relation in the sense of \cite{conant2017freeamalgamation}. It is also different from the independence relations studied in \cite{mutchnik2022conantindependence}.
\end{remark}
\begin{remark}
    We can characterise $\ind^\hyp$-Morley sequences over some $M \models T_{\mathbf{H}_4\textnormal{-free}}$ starting at some $\bar{b}_0$ disjoint from $M$ as precisely those sequences $(\bar{b}_i)_{i \in \omega}$ such that $\models R(m, b_{i,k}, b_{j,l})$ for all $i < j$ and any indices $k, l$ and elements $m \in M$.
\end{remark}
\begin{proposition} \label{prop:indht-kim-ind-is-trivial}
    For (possibly infinite) tuples $\bar{a}$ and $\bar{b}$, and a model $M \models T_{\mathbf{H}_4\textnormal{-free}}$, $\tp(\bar{a}/M\bar{b})$ does not $\ind^\hyp$-Kim-divide over $M$ iff $\bar{a} \cap \bar{b} \subseteq M$. 
\end{proposition}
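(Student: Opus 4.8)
The plan is to prove both implications separately, with essentially all the content in the right-to-left direction. Throughout I use that, by quantifier elimination, a partial type over parameters is consistent as soon as every finite fragment describes an $\mathbf{H}_4$-free $3$-hypertournament, and I use the characterisation of $\ind^\hyp$-Morley sequences over $M$ (starting at $\bar b_0$ disjoint from $M$) recorded just above the statement: these are exactly the sequences $(\bar b_i)_{i\in\omega}$ with $\bar b_i\equiv_M\bar b_0$ and $\models R(m,b_{i,k},b_{j,l})$ for all $i<j$, all indices $k,l$, and all $m\in M$.

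For the forward direction I argue by contraposition. Suppose $\bar a\cap\bar b\not\subseteq M$ and fix $e\in(\bar a\cap\bar b)\setminus M$, say $e=a_p=b_q$, so that the formula $x_p=b_q$ lies in $\tp(\bar a/M\bar b)$. I claim it $\ind^\hyp$-Kim-divides over $M$. First I produce a global $\ind^\hyp$-free extension $q$ of $\tp(\bar b/M)$ by an explicit construction mirroring the proof that $T_{\mathbf{H}_4\text{-free}}$ is locally $\ind^{\textnormal{i}}$-extensible, forcing $R(m,b^*_k,d)$ for all $m\in M$ and $d\notin M$ (this guarantees $\ind^\hyp$-freeness by definition of $\ind^\hyp$). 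Taking a Morley sequence $(\bar b_i)_{i\in\omega}$ in $q$ with $\bar b_0=\bar b$, each $\bar b_i\models q|_{M\bar b_{<i}}$ gives $\bar b_i\ind^\hyp_M\bar b_{<i}$, hence $\bar b_i\cap\bar b_{<i}\subseteq M$; since $b_{i,q}\notin M$ the elements $(b_{i,q})_{i\in\omega}$ are pairwise distinct, so $\{x_p=b_{i,q}:i\in\omega\}$ is $2$-inconsistent. Thus $x_p=b_q$ $\ind^\hyp$-Kim-divides, and therefore so does $\tp(\bar a/M\bar b)$.

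For the converse, assume $\bar a\cap\bar b\subseteq M$. It suffices to realise $\tp(\bar a/M\bar b)$ simultaneously over every member of an arbitrary $\ind^\hyp$-Morley sequence $(\bar b_i)_{i\in\omega}$ (with $\bar b_0=\bar b$) in a global $\ind^\hyp$-free $q\supseteq\tp(\bar b/M)$: any $\bar a'$ with $\bar a'\,\bar b_i\equiv_M\bar a\,\bar b$ for all $i$ satisfies $\phi(\bar x,\bar b_i)$ for every $i$ and every $\phi\in\tp(\bar a/M\bar b)$, so no such $\phi$ can Kim-divide along this sequence. I therefore let $\Sigma(\bar x)$ consist of the formulas $\bar x\,\bar b_i\equiv_M\bar a\,\bar b$ for all $i$, together with the \emph{cross rule} $R(x_p,b_{i,k},b_{j,l})$ for all $i<j$ and all coordinates $b_{i,k},b_{j,l}\notin M$. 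This prescribes the $R$-pattern of every triple consistently (the cross rule only concerns pairs from distinct blocks, which the equivalences leave free), so by compactness I need only verify that $\Sigma$ generates no $\mathbf{H}_4$-structure.

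The verification is a case analysis driven entirely by the criterion of Remark \ref{rem:h4-free-criterion}, namely that $R(u,v,w)\wedge R(u,v,w')$ forces $\{u,v,w,w'\}$ to be $\mathbf{H}_4$-free irrespective of the other edges. Four-point sets inside $M\cup\bar b_i$ for a single $i$ (including all-$\bar x$ sets) are $\mathbf{H}_4$-free because $\bar x\,\bar b_i\equiv_M\bar a\,\bar b$ copies a configuration realised in $\M$, and sets inside $M$ together with the Morley sequence are $\mathbf{H}_4$-free since they live in $\M$. Every remaining $4$-set contains an $\bar x$-coordinate and $b$-elements from at least two distinct blocks, and in each the cross rule yields two hyperedges sharing a pair after cyclic rotation: for $\{x_p,b_{i,k},b_{j,l},m\}$ with $i<j$ and $m\in M$ one has $R(b_{i,k},b_{j,l},x_p)$ (cross rule) and $R(b_{i,k},b_{j,l},m)$ (the Morley property), sharing $b_{i,k},b_{j,l}$; for $\{x_p,x_{p'},b_{i,k},b_{j,l}\}$ one has $R(b_{i,k},b_{j,l},x_p)$ and $R(b_{i,k},b_{j,l},x_{p'})$; and for $4$-sets with a single $x_p$ and three $b$-elements across blocks one always finds two incident cross edges sharing a pair (either $R(x_p,b_{i,k},\cdot)$ with common earlier block, or $R(\gamma,x_p,\cdot)$ with common later element). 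In each case Remark \ref{rem:h4-free-criterion} gives $\mathbf{H}_4$-freeness, so $\Sigma$ is consistent and a realisation $\bar a'$ completes the proof. The main obstacle is exactly this last bookkeeping: choosing the cross rule so that every mixed $4$-set inherits a repeated-pair hyperedge, which is precisely where the Morley property $R(m,b_{i,k},b_{j,l})$ is used.
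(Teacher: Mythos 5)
Your right-to-left argument --- the substantive direction, and the only one the paper proves in detail --- is essentially the paper's own proof. The paper likewise fixes an arbitrary $\ind^\hyp$-Morley sequence $(\bar b_i)_{i\in\omega}$ starting at $\bar b$, imposes the same cross rule $R(x_k,b_{i,l},b_{j,l'})$ for $i<j$, and verifies $\mathbf{H}_4$-freeness through exactly the interaction you isolate: the cross edge $R(b_{i,l},b_{j,l'},x_k)$ and the Morley edge $R(b_{i,l},b_{j,l'},m)$ share a pair, so Remark \ref{rem:h4-free-criterion} applies. The only cosmetic difference is that the paper invokes $\omega$-categoricity to replace $\tp(\bar a/M\bar b)$ by a single formula isolating a complete type over $M'\bar b$ with $M'\subset M$ finite, whereas you copy the whole type $\tp(\bar a\bar b/M)$ onto each block; both work.

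However, your forward direction contains an orientation error that makes one step false as written. You force $R(m,b^{*}_k,d)$ for $m\in M$, $d\notin M$ and claim this ``guarantees $\ind^\hyp$-freeness by definition of $\ind^\hyp$.'' Unwinding that definition, $\bar b'\ind^\hyp_M B$ requires $\models R(b'_k,m,d)$ with the realisation's coordinate in the \emph{first} slot; by the cyclicity axiom this is equivalent to $R(m,d,b'_k)$, and by axiom (iii) your formula $R(m,b'_k,d)$ is precisely its negation. What your construction actually produces is a global extension that is $(\ind^\hyp)^{\textnormal{opp}}$-free over $M$ (i.e., $B\ind^\hyp_M\bar b'$ for realisations $\bar b'$), which does not qualify as a witness for $\ind^\hyp$-Kim-dividing, since that notion is defined via Morley sequences in $\ind^\hyp$-free types. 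The repair is immediate: force $R(b^{*}_k,m,d)$ instead --- this is exactly Lemma \ref{lem:ind-hti-morley-seqs-exist} of the paper, whose consistency check is the mirror image of yours --- and the rest of your contrapositive argument (pairwise distinctness of the $b_{i,q}$ outside $M$, hence $2$-inconsistency of $x_p=b_q$) goes through unchanged, because it only uses the disjointness clause of $\ind^\hyp$, which is orientation-independent. Note that the paper itself dismisses this direction in one sentence, so your instinct to spell it out is sound; it just has to be spelled out with the correct orientation.
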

\begin{proof}
    It is enough to prove the right-to-left direction. Suppose $\bar{a} \cap \bar{b} \subseteq M$. Note that we may assume that $\bar{a}$ and $\bar{b}$ are both finite tuples. Also note that, if either $\bar{a} \subseteq \bar{b}$ or $\bar{b} \subseteq \bar{a}$ hold, then there is nothing to do. So, without loss of generality, suppose that $\bar{a} \cap \bar{b} = \varnothing$ (in particular, they are disjoint from $M$). Let $(\bar{b}_i)_{i \in \omega}$ be an $\ind^\hyp$-Morley sequence over $M$ starting with $\bar{b}$. Let $\phi(\bar{x},\bar{b}) \in \tp(\bar{a}/M\bar{b})$, which, since $T_{\mathbf{H}_4\textnormal{-free}}$ is $\omega$-categorical, we may assume is a formula isolating a complete type over $M'\bar{b}$ for some finite $M' \subset M$. We claim that $\{\phi(\bar{x}, \bar{b}_i) : i \in \omega\}$ is consistent. 

    We show that
    \begin{equation*}
        \{\phi(\bar{x}, \bar{b}_i) : i \in \omega\} \cup \{R(x_k, b_{i,l}, b_{j,l'}) : i < j \in \omega\}
    \end{equation*}
    is consistent. Note that, by the condition on elements from different indexed tuples from the $\ind^\hyp$-Morley sequence and the fact that $\phi$ isolates a complete type, all $4$-substructures containing only variables and elements from the $\bar{b}_i$'s are $\mathbf{H}_4$-free. Moreover, since $\phi$ isolates a type, we have, for any $i, j \in \omega$, that $R(x_k, m, b_{i,l}) \in \phi(\bar{x}, \bar{b}_i)$ iff $R(x_k, m, b_{j,l}) \in \phi(\bar{x}, \bar{b}_j)$, and thus the set $\{x_k, m, b_{i,l}, b_{j,l}\}$ is $\mathbf{H}_4$-free. Finally, since $(\bar{b}_i)_{i \in \omega}$ is an $\ind^\hyp$-Morley sequence over $M$, we have $\models R(m, b_{i,l}, b_{j,l'})$ whenever $i < j$, and so it follows that $\{x, m, b_{i,l}, b_{j,l'}\}$ is $\mathbf{H}_4$-free. These are all the possible $\mathbf{H}_4$-substructures.
    
    Therefore, $\tp(\bar{a}/M\bar{b})$ does not $\ind^\hyp$-Kim-divide over $M$, as required. 
\end{proof}
\begin{lemma} \label{lem:ind-hti-morley-seqs-exist}
    Every type $p \in S(M)$ has a global $M$-invariant extension that is $\ind^\hyp$-free over $M$. 
\end{lemma}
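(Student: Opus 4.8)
The plan is to recycle the construction used to prove that $T_{\mathbf{H}_4\textnormal{-free}}$ is locally $\ind^{\textnormal{i}}$-extensible, now taking the non-empty parameter set to be the model $M$ itself, and then to observe that the defining clause of that construction is exactly what $\ind^\hyp$-freeness demands. Concretely, fix a tuple $\bar{a}^* \in M$ with $\bar{a}^* \equiv \bar{x}$ (which exists by $\omega$-categoricity when $\bar{x}$ is finite), and let $q(\bar{x})$ be the global extension of $p$ obtained by adjoining, for every non-algebraic coordinate $x_i$, the clause $R(x_i, m, b)$ for all $m \in M$ and $b \in \M \setminus M$, together with the copying clauses $R(x_i, b_1, b_2) \leftrightarrow R(a^*_i, b_1, b_2)$ and $R(x_i, x_j, b) \leftrightarrow R(a^*_i, a^*_j, b)$ for all $b, b_1, b_2 \in \M \setminus M$ and all $i, j$. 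Since $p$ already decides every triple of points inside $\{x_i\} \cup M$, these clauses decide each remaining triple, so $q$ is complete.

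That $q$ is consistent is checked exactly as in the earlier argument: any potential $\mathbf{H}_4$-structure must meet $\M \setminus M$, and a case analysis on the number of variable coordinates in a four-point set shows each such set is $\mathbf{H}_4$-free. The clause $R(x_i, m, b)$ disposes of the sets meeting $M$ via Remark \ref{rem:h4-free-criterion} (two of the imposed hyperedges share a common pair of points), while the copying clauses make each four-point set lying inside $\{x_i\} \cup (\M\setminus M)$ isomorphic to the corresponding real set obtained by replacing $x_i$ with $a^*_i$. Invariance of all these clauses under $\Aut(\M/M)$ is immediate, since $\bar a^* \in M$ and the clauses only reference the type of the outside parameters over $M$; hence $q$ is a global $M$-invariant extension of $p$ (equivalently, Lascar-invariant over $M$, as over a model the two notions agree).

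It then remains only to read off $\ind^\hyp$-freeness. Let $B \supseteq M$ and $\bar a \models q|_B$. First, $\bar a \cap B \subseteq M$: a coordinate $a_i$ not pinned to $M$ by $p$ satisfies $\models R(a_i, m, b)$ for some $m \in M$, whence $a_i \neq b$ for every $b \in B \setminus M$, while the pinned coordinates already lie in $M$. Second, the clause $R(x_i, m, b)$ gives $\models R(a_i, m, b)$ for every $a_i \in \bar a \setminus M$, $m \in M$, and $b \in B \setminus M$. These are precisely the two requirements defining $\bar a \ind^\hyp_M B$, so $q$ is $\ind^\hyp$-free over $M$, as desired.

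The one genuinely delicate point is the consistency check for four-point sets of the form $\{x_i, x_j, x_k, b\}$ carrying a single outside element: such a set is $\mathbf{H}_4$-free only because $\bar a^*$ was chosen to realize the type of $\bar x$, which forces the purely-variable face $R(x_i,x_j,x_k)$ to agree with the real configuration $R(a^*_i,a^*_j,a^*_k)$; an arbitrary $\bar a^*$ could flip this face and create an $\mathbf{H}_4$. For infinite $\bar{x}$ a single such $\bar a^* \in M$ need not exist, and one would instead produce $q$ by a direct amalgamation argument exploiting that the age is a strong amalgamation class (Lemma \ref{lem:h4-free-is-strong-amalgamation-class}); but for the finite tuples relevant in the sequel this subtlety does not arise, and the construction above suffices.
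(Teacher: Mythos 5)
Your proposal is correct and follows essentially the same route as the paper: the paper likewise extends $p$ by the clauses $R(x_i, m, a)$ for $m \in M$, $a \notin M$, together with clauses copying from a tuple $\bar{m}$ inside $M$ every formula whose parameters avoid $M$, and then runs the same consistency check on four-point configurations, reading off $M$-invariance and $\ind^\hyp$-freeness exactly as you do. The one divergence favours you: the paper merely fixes ``some tuple $\bar{m}$ from $M$ of the same length as $\bar{b}$'', whereas your requirement $\bar{a}^* \equiv \bar{x}$ is precisely what rules out an $\mathbf{H}_4$ on sets of the form $\{x_i, x_j, x_k, b\}$ (as your final paragraph explains, flipping the purely-variable face of an $\mathbf{O}_4$ can produce an $\mathbf{H}_4$), so your write-up makes explicit a choice that the paper's argument tacitly needs.
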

\begin{proof}
    It suffices to show that $p$ has a global $M$-invariant extension $q$ such that $R(x, m, a) \in q$ for all $m \in M$ and $a \notin M$. Let us write $p(\bar{x}) = \tp(\bar{b}/M)$ for some realisation $\bar{b}$ in the monster. We may assume, without loss of generality, that $\bar{b}$ is disjoint from $M$. Fix some tuple $\bar{m}$ from $M$ of the same length as $\bar{b}$. Then it is enough to show that the extension of $\tp(\bar{b}/M)$ obtained by adding the formulas:
    \begin{enumerate}[(i)]
        \item $R(x, m, a)$ for all $m \in M$ and $a \notin M$;
        \item $\phi(\bar{x}, \bar{a})$ whenever $\bar{a} \cap M = \varnothing$ and $\models \phi(\bar{m}, \bar{a})$
    \end{enumerate}
    is consistent. (Note that $M$-invariance is clear.)

    As usual, it suffices to check all possible $\mathbf{H}_4$-substructures. Note that, by (ii), no $4$-point set containing only variables and elements outside of $M$ can be $\mathbf{H}_4$. So the only possibilities are $\{x_i, x_j, m, a\}$, $\{x_i, m, m', a\}$ and $\{x_i, m, a, a'\}$ for $m, m' \in M$ and $a, a' \notin M$. But all these are $\mathbf{H}_4$-free by (i).
\end{proof}
\begin{proof}[Proof of Proposition \ref{prop:h4-free-is-nsop4} (v.2)]
    We claim that $\bar{a}$ is Conant-independent from $\bar{b}$ over $M$ iff $\bar{a} \cap \bar{b} \subseteq M$. The left-to-right direction is always true. For the converse, suppose that $\bar{a} \cap \bar{b} \subseteq M$. By Proposition \ref{prop:indht-kim-ind-is-trivial}, $\tp(\bar{a}/M\bar{b})$ does not $\ind^\hyp$-Kim-divide over $M$. Moreover, by Lemma \ref{lem:ind-hti-morley-seqs-exist}, there exists an $M$-invariant $\ind^\hyp$-Morley sequence $(\bar{b}_i)_{i \in \omega}$ over $M$ with $\bar{b}_0 = b$. So, for all $\phi(\bar{x}, \bar{b}) \in \tp(\bar{a}/M\bar{b})$, $\{\phi(\bar{x}, \bar{b}_i) : i \in \omega\}$ is consistent, and thus, $\phi(\bar{x}, \bar{b})$ does not Conant-divide over $M$. Therefore, by Remark \ref{rem:h4-conant-fork-eq-conant-div}, $\bar{a}$ is Conant-independent from $\bar{b}$ over $M$. 
    
    In particular, it follows that Conant-independence is symmetric. Thus, by Fact \ref{fact:symmetric-conant-independence}, $T_{\mathbf{H}_4\textnormal{-free}}$ is $\NSOP_4$.
\end{proof}
\subsection{Relative Kim's lemma}
We focus on another one of the notions introduced by Mutchnik, namely, the relative Kim's lemma. In view of Remark \ref{rem:rel-kim-lemma-and-invariance}, we immediately notice that $\ind^\hyp$ cannot satisfy the relative Kim's lemma, as global $M$-$\ind^\hyp$-free types might fail to be $M$-invariant.

To fix this, let us define $\bar{a} \ind_M^\hti B$ iff $\bar{a} \ind_M^{\textnormal{i}} B$ and $\bar{a} \ind_M^\hyp B$. 
\begin{lemma} \label{lem:hti-props}
    $\ind^\hti$ is an independence relation satisfying monotonicity, full existence, and left extension.
\end{lemma}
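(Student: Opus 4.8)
The plan is to treat $\ind^\hti$ as the conjunction $\ind^{\textnormal{i}} \cap \ind^\hyp$ and to derive each property from the corresponding properties of the two factors, which are already available: invariance and monotonicity of $\ind^{\textnormal{i}}$ from Example~\ref{ex:inv-ind-rel-properties}, and monotonicity, full existence, and left extension of $\ind^\hyp$ from the preceding lemma. Invariance and monotonicity are then immediate, since an intersection of two $\Aut(\M)$-invariant relations is $\Aut(\M)$-invariant, and if both $\ind^{\textnormal{i}}$ and $\ind^\hyp$ satisfy left (resp.\ right) monotonicity then so does their conjunction, by intersecting the two conclusions.

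For full existence I would appeal directly to Lemma~\ref{lem:ind-hti-morley-seqs-exist}. Given $\bar a$, a set $B$, and a model $M$, let $q$ be the global $M$-invariant, $\ind^\hyp$-free extension of $\tp(\bar a/M)$ provided by that lemma, and realise $\bar a' \models q|_{MB}$. Then $\bar a' \equiv_M \bar a$; since $q$ is a global $M$-invariant extension of $\tp(\bar a'/MB)$ we get $\bar a' \ind^{\textnormal{i}}_M B$; and since $q$ is $\ind^\hyp$-free we get $\bar a' \ind^\hyp_M MB$, whence $\bar a' \ind^\hyp_M B$ by right monotonicity. Thus $\bar a' \ind^\hti_M B$.

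Left extension is the main obstacle, and it will \emph{not} follow by naively chaining the left-extension properties of the two factors: using left extension for $\ind^\hyp$ to pass from $A$ to $A'$ fixes the hypertournament relations between the new coordinates $A' \setminus A$ and $B$, but a subsequent use of left extension for $\ind^{\textnormal{i}}$ (which moves $B$ only over $AM$, not over $A'M$) can destroy exactly those relations. Instead I plan a single combined construction. Starting from $A \ind^\hti_M B$ with $A \subseteq A'$, fix a global $M$-invariant type $p \supseteq \tp(A/MB)$ witnessing $A \ind^{\textnormal{i}}_M B$. The key step is to extend $p$ to a global $M$-invariant type $p'$ in the enlarged variables indexing $A'$, so that $p'$ restricts to $p$ on the old coordinates, restricts to $\tp(A'/M)$ over $M$, and forces $R(x,m,y)$ for every new coordinate variable $x$, every $m \in M$, and every $y \notin M$. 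Building $p'$ is an $\mathbf{H}_4$-avoidance consistency argument of precisely the flavour of Lemma~\ref{lem:ind-hti-morley-seqs-exist}, and checking that no $\mathbf{H}_4$ arises from the interaction of the constraints inherited from $p$ with the new hypertournament relations is the one genuinely technical point.

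With $p'$ in hand, I would realise $A^\flat A_0^\flat \models p'|_{MB}$. Since $p'$ restricts to $p$ on the old variables and $p$ is a complete global type extending $\tp(A/MB)$, its restriction to $MB$ is exactly $\tp(A/MB)$, so $A^\flat \equiv_{MB} A$; taking an automorphism $\tau$ fixing $MB$ with $\tau(A^\flat)=A$ and setting $A_0' := \tau(A_0^\flat)$, the tuple $AA_0'$ realises a restriction of the global $M$-invariant type $\tau(p')$, giving $AA_0' \ind^{\textnormal{i}}_M B$, while the $R$-constraints built into $p'$ (transported by $\tau$, which fixes $M$ and $B$) together with the hypothesis $A \ind^\hyp_M B$ yield $AA_0' \ind^\hyp_M B$, so $AA_0' \ind^\hti_M B$; moreover $AA_0' \equiv_M A'$. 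Finally, since $A_0' \equiv_{MA} A'\setminus A$, I take an automorphism $\rho$ fixing $MA$ with $\rho(A_0') = A'\setminus A$ and set $B' := \rho(B)$; then $B' \equiv_{MA} B$, and applying $\rho$ to $AA_0' \ind^\hti_M B$ gives $A' \ind^\hti_M B'$, completing left extension.
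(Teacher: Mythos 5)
Your treatment of monotonicity and full existence is correct and coincides with the paper's. For left extension your organisation is genuinely different from the paper's: the paper keeps $A' = \bar a\bar c$ fixed and directly constructs the type of $B'$ over $MA'$ (the hypothesis $\tp(\bar b/M\bar a)$ plus the forced relations $R(c_i,m,x_j)$ plus a copying clause), whereas you extend the \emph{global invariant type} $p$ on the $A$-side to a type $p'$ in more variables, realise it over $MB$, and then transport with two automorphisms. Your bookkeeping ($A^\flat \equiv_{MB} A$, transport of the forced relations by $\tau$, the final flip by $\rho$) is correct, your opening observation that naively chaining the left-extension properties of the two factors fails is also correct, and your arrangement has the advantage that the $\ind^{\textnormal{i}}$-half of the conclusion is immediate from $M$-invariance of $\tau(p')$, a point the paper handles only by assertion.

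The gap is the existence of $p'$, which is the entire content of the lemma and which you defer with the claim that it is ``precisely the flavour of Lemma~\ref{lem:ind-hti-morley-seqs-exist}.'' It is not. Your three conditions (restriction $p$, restriction $\tp(A'/M)$, forced $R(x,m,y)$) do not determine $p'$, and the completion is exactly where the difficulty lies: unlike in Lemma~\ref{lem:ind-hti-morley-seqs-exist}, where every variable is ``new'' and copies a tuple of $M$, here there are \emph{old} variables whose relations to elements outside $M$ are dictated by the arbitrary invariant type $p$, over which you have no control. Concretely, consider a $4$-set $\{x_{\mathrm{new}}, x_{\mathrm{old}}, m, e\}$ with $m \in M$, $e \notin M$. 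Only one of its triples is forced, namely $R(x_{\mathrm{new}},m,e)$, so Remark~\ref{rem:h4-free-criterion} does not apply, and the unique labelled $\mathbf{H}_4$ extending that oriented triple is
\begin{equation*}
R(x_{\mathrm{new}},m,e)\wedge R(x_{\mathrm{new}},e,x_{\mathrm{old}})\wedge R(x_{\mathrm{new}},x_{\mathrm{old}},m)\wedge R(m,x_{\mathrm{old}},e).
\end{equation*}
If you complete $p'$ by the Lemma~\ref{lem:ind-hti-morley-seqs-exist} recipe (new variables copy a tuple $\bar m_0 \subset M$ against all parameters outside $M$), then the second conjunct is decided by whether $R(m_0,e,x_{\mathrm{old}}) \in p$, the third by $\tp(A'/M)$, and the fourth by $p$; nothing in your hypotheses prevents $p$ and $\tp(A'/M)$ from realising all three simultaneously, in which case the ``completion'' is inconsistent. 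The repair is to add a further forced family, $R(x_{\mathrm{new}}, x_{\mathrm{old}}, e)$ for every old variable and every $e \notin M$: then each such $4$-set carries two forced triples sharing the oriented pair $(e,x_{\mathrm{new}})$, so Remark~\ref{rem:h4-free-criterion} applies, and the remaining $4$-sets (new variables and outside elements only) are handled by the copy rule provided $\bar m_0$ is chosen to realise $\qftp(A'\setminus A/\varnothing)$ in $M$ --- another condition your sketch omits. With these two additions $p'$ exists ($M$-invariance and completeness via quantifier elimination are then clear) and your argument goes through; but as written, the step you wave at is not supplied by the analogy you cite, and it is the step where the lemma actually lives.
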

\begin{proof}
    (Monotonicity) This holds, as it is satisfied by both $\ind^\hyp$ and $\ind^{\textnormal{i}}$. 
    
    (Full existence) Let $\bar{a}$ be a tuple, $M \models T$, and $B$ a set. Let $p := \tp(\bar{a}/M)$. By Lemma \ref{lem:ind-hti-morley-seqs-exist}, there is a global $M$-invariant extension $q$ of $p$ which is $\ind^\hyp$-free over $M$. Let $\bar{a}' \models q|_{MB}$, so that $\bar{a}' \equiv_M \bar{a}$. Then $\bar{a}' \ind^{\text{i}}_M B$ by $M$-invariance of $q$, and $\bar{a}' \ind^\hyp_M B$, since $q$ is $\ind^\hyp$-free over $M$. Hence, $\bar{a}' \ind^\hti_M B$.
    
    (Left extension) This proof is analogous to that of Lemma \ref{lem:ind-hti-morley-seqs-exist}: Suppose that $\bar{a}$, $M\models T$, and $B$ are given such that $\bar{a} \ind^\hti_M B$, and $\bar{a}'$ is a tuple extending $\bar{a}$. We may write $\bar{a}'$ as the concatenation of $\bar{a}$ with some other tuple $\bar{c}$. Let $\bar{b}$ be a (possibly infinite) tuple enumerating $B$. Fix a tuple $\bar{m}$ in $M$ of the same length as $\bar{c}$. We now want to show the consistency of the extension of $\tp(\bar{b}/M\bar{a})$ by the following set of formulas:
    \begin{enumerate}[(i)]
        \item $R(c_i, m, x_j)$ for all indices $i,j$ of the appropriate length and $m \in M$. 
        \item $\phi(\bar{c}, \bar{x}) \leftrightarrow \phi(\bar{m}, \bar{x})$ for any $\mathcal{L}$-formula $\phi$.
    \end{enumerate}
    As usual, it suffices to show that this does not generate any $\mathbf{H}_4$-structures. In this case, the proof is analogous to that of Lemma \ref{lem:ind-hti-morley-seqs-exist}, so we omit it. Pick a realisation $\bar{b}'$ of this type. By (i), we have $\bar{a}' \ind^\hyp_M \bar{b}'$, and by (ii), $\tp(\bar{a}'/M\bar{b}')$ is $M$-invariant. Therefore, letting $B'$ be the set enumerated by $\bar{b}'$, it follows that $B' \equiv_{M\bar{a}} B$ and $\bar{a}' \ind^\hti_M B'$.
\end{proof}
We can now also see that the Morley sequences appearing in Lemma \ref{lem:ind-hti-morley-seqs-exist} are precisely $\ind^\hti$-Morley sequences. So, from Proposition \ref{prop:indht-kim-ind-is-trivial}, we immediately obtain the following:
\begin{corollary}\label{cor:hti-kim-ind-is-trivial}
    For (possibly infinite) tuples $\bar{a}$ and $\bar{b}$, and a model $M \models T_{\mathbf{H}_4\textnormal{-free}}$, $\tp(\bar{a}/M\bar{b})$ does not $\ind^\hti$-Kim-divide over $M$ iff $\bar{a} \cap \bar{b} \subseteq M$.
\end{corollary}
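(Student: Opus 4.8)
The plan is to read this off directly from Proposition \ref{prop:indht-kim-ind-is-trivial}, using only that, by the very definition $\bar{a} \ind_M^\hti B \iff (\bar{a}\ind_M^{\textnormal{i}} B \text{ and } \bar{a}\ind_M^\hyp B)$, the relation $\ind^\hti$ refines $\ind^\hyp$, i.e. $\ind^\hti \implies \ind^\hyp$. As in the proposition I would split the biconditional into the ``always true'' direction and the substantive one, with only the latter appealing to the proposition. The one thing worth isolating at the outset is how the implication $\ind^\hti \implies \ind^\hyp$ interacts with Kim-dividing: an $\ind^\hti$-Morley sequence over $M$ (a Morley sequence in a global $\ind^\hti$-free type) is, since every $\ind^\hti$-free type is a fortiori $\ind^\hyp$-free, also an $\ind^\hyp$-Morley sequence over $M$ — concretely, one of the sequences satisfying $\models R(m,b_{i,k},b_{j,l})$ for $i<j$ characterised earlier.

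For the right-to-left direction, suppose $\bar{a} \cap \bar{b} \subseteq M$, and take any $\phi(\bar{x},\bar{b}) \in \tp(\bar{a}/M\bar{b})$. To see that $\phi$ does not $\ind^\hti$-Kim-divide over $M$, I would unwind the definition: given any global $\ind^\hti$-free extension $q$ of $\tp(\bar{b}/M)$ and any Morley sequence $(\bar{b}_i)_{i\in\omega}$ in $q$, this same $q$ is $\ind^\hyp$-free and $(\bar{b}_i)_{i\in\omega}$ is an $\ind^\hyp$-Morley sequence over $M$. Proposition \ref{prop:indht-kim-ind-is-trivial} then guarantees that $\{\phi(\bar{x},\bar{b}_i):i\in\omega\}$ is consistent, so no such witness to $\ind^\hti$-Kim-dividing exists. (Equivalently: since $\ind^\hti$-Kim-dividing would a fortiori be $\ind^\hyp$-Kim-dividing, non-$\ind^\hyp$-Kim-dividing implies non-$\ind^\hti$-Kim-dividing.)

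For the left-to-right direction I would argue by contraposition, exactly as for $\ind^\hyp$: if $\bar{a} \cap \bar{b} \not\subseteq M$, fix a common $e \in \bar{a}\cap\bar{b}$ with $e \notin M$, say $a_k = b_l = e$, so $x_k = b_l \in \tp(\bar{a}/M\bar{b})$. By Lemma \ref{lem:ind-hti-morley-seqs-exist} there is a global $M$-invariant $\ind^\hti$-free extension of $\tp(\bar{b}/M)$, and any Morley sequence $(\bar{b}_i)_{i\in\omega}$ in it has pairwise distinct $l$-th coordinates (since $\ind^\hti$ refines $\ind^\hyp$, which forces $\bar{b}_i\cap\bar{b}_{<i}\subseteq M$ and hence $b_{i,l}\neq b_{j,l}$ for $i\neq j$), whence $\{x_k = b_{i,l} : i\in\omega\}$ is inconsistent and $x_k=b_l$ $\ind^\hti$-Kim-divides over $M$. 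The proof is essentially immediate, so there is no genuine obstacle; the only point demanding care is the \emph{direction} of the transfer in the second paragraph — the stronger relation $\ind^\hti$ admits \emph{fewer} free types, hence fewer Morley sequences, so $\ind^\hti$-Kim-dividing is harder than $\ind^\hyp$-Kim-dividing, and it is non-$\ind^\hyp$-Kim-dividing (not the reverse) that yields the conclusion.
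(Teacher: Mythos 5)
Your proposal is correct and takes essentially the same route as the paper: the paper also derives the corollary from Proposition \ref{prop:indht-kim-ind-is-trivial} by observing that $\ind^\hti$-Morley sequences over $M$ are exactly the Morley sequences of Lemma \ref{lem:ind-hti-morley-seqs-exist}, hence in particular $\ind^\hyp$-Morley sequences, which is precisely the direction of transfer you isolate (fewer free types for the stronger relation, so non-$\ind^\hyp$-Kim-dividing passes to non-$\ind^\hti$-Kim-dividing). Your contrapositive argument for the left-to-right direction, using Lemma \ref{lem:ind-hti-morley-seqs-exist} to produce a witness for $x_k = b_l$, simply spells out what the paper treats as immediate.
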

Our goal now is to prove that $\ind^\hti$-free global types are $\leq_K$-least. At this point, $\omega$-categoricity is particularly useful:
\begin{lemma}\label{lem:omega-cat-only-inclusion-of-isolating-fmls}
    Let $T$ be $\omega$-categorical, $M \models T$, and $q$, $r$ be global $M$-invariant types with $q|_M = r|_M$. Assume that, for all formulas $\phi(\bar{x},\bar{y})$ isolating a complete type over some finite subset of $M$, if $\phi(\bar{x},\bar{y}) \in \cl_K(q)$, then $\phi(\bar{x}, \bar{y}) \in \cl_K(r)$. Then $\cl_K(q) \subseteq \cl_K(r)$.
\end{lemma}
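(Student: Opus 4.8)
The plan is to reduce the inclusion $\cl_K(q) \subseteq \cl_K(r)$ to the hypothesised inclusion on isolating formulas, by decomposing an arbitrary formula into a finite disjunction of isolating ones and exploiting $\omega$-categoricity. Fix $\psi(\bar{x},\bar{y}) \in \cl_K(q)$; since $\psi$ has only finitely many parameters, I may take them from a finite $M_0 \subseteq M$. Choose any $\bar{a} \models q|_M = r|_M$. By $\omega$-categoricity there are only finitely many complete $\bar{x}$-types over $M_0\bar{a}$ extending $\psi(\bar{x},\bar{a})$, and each is isolated; using the $M$-invariance of $q$ (so that all realisations of $q|_M$ share the same type over $M_0$) I can pick the isolating formulas uniformly as $\phi_0(\bar{x},\bar{y}), \dots, \phi_{k-1}(\bar{x},\bar{y})$ with parameters in $M_0$, each $\phi_j$ isolating a complete type over a finite subset of $M$. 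Then $\models \forall \bar{x}\,\big(\psi(\bar{x},\bar{a}) \leftrightarrow \bigvee_{j<k}\phi_j(\bar{x},\bar{a})\big)$, and this biconditional is a consequence of $\tp(\bar{a}/M_0) = q|_{M_0} = r|_{M_0}$, so it holds equally for realisations of $r|_M$.

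The heart of the argument is a \emph{disjunction lemma}: for any global $M$-invariant type $p$ with $\bar{a} \models p|_M$ admitting such a decomposition (with each $\phi_j(\bar{x},\bar{a})$ consistent), one has $\psi \in \cl_K(p)$ iff $\phi_j \in \cl_K(p)$ for every $j < k$. The forward direction is immediate: for a Morley sequence $(\bar{a}_i)_{i\in\omega}$ in $p$ over $M$, each $\phi_j(\bar{x},\bar{a}_i)$ implies $\psi(\bar{x},\bar{a}_i)$, so consistency of $\{\phi_j(\bar{x},\bar{a}_i) : i \in \omega\}$ would force consistency of $\{\psi(\bar{x},\bar{a}_i) : i \in \omega\}$; contrapositively, $\psi \in \cl_K(p)$ yields $\phi_j \in \cl_K(p)$. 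For the reverse direction, suppose all $\phi_j \in \cl_K(p)$ but $\{\psi(\bar{x},\bar{a}_i) : i \in \omega\}$ is consistent, realised by $\bar{c}$. For each $i$ pick $j(i) < k$ with $\models \phi_{j(i)}(\bar{c},\bar{a}_i)$; by pigeonhole some value $j^*$ occurs on an infinite set $S$, whence $\{\phi_{j^*}(\bar{x},\bar{a}_i) : i \in S\}$ is consistent. But an infinite subsequence of a Morley sequence in an invariant type, reindexed, is again a Morley sequence in $p$ over $M$, so the ``any Morley sequence'' clause of the definition of $\cl_K$ forces $\{\phi_{j^*}(\bar{x},\bar{a}_i) : i \in S\}$ to be inconsistent -- a contradiction.

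With the disjunction lemma in hand the conclusion follows at once. Applying it to $p = q$ gives $\phi_j \in \cl_K(q)$ for all $j$; the hypothesis of the lemma then yields $\phi_j \in \cl_K(r)$ for all $j$; and since the very same decomposition of $\psi$ is valid over $r|_M = q|_M$, applying the reverse direction of the disjunction lemma to $p = r$ gives $\psi \in \cl_K(r)$. As $\psi$ was arbitrary, $\cl_K(q) \subseteq \cl_K(r)$.

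I expect the only genuinely delicate point to be the reverse direction of the disjunction lemma: one must verify that the pigeonhole-selected subsequence is legitimately a Morley sequence in $p$, so that the defining ``any Morley sequence'' clause of $\cl_K$ applies. This rests on the $M$-indiscernibility of Morley sequences in invariant types together with the observation that $\bar{a}_{i_n} \models p|_{M\bar{a}_{<i_n}}$ restricts to $\bar{a}_{i_n} \models p|_{M\bar{a}_{i_0}\cdots\bar{a}_{i_{n-1}}}$. The uniform choice of the isolating formulas $\phi_j$ across all realisations of $q|_M$, via invariance, is a minor but necessary bookkeeping step; everything else is routine.
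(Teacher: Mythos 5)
Your proof is correct and follows essentially the same route as the paper's: decompose $\psi$ via $\omega$-categoricity into finitely many formulas isolating complete types, transfer each disjunct from $\cl_K(q)$ to $\cl_K(r)$ using the hypothesis, and then use pigeonhole on a realisation along a Morley sequence in $r$ together with the observation that an infinite subsequence of a Morley sequence in an invariant type is again such a Morley sequence. The only (harmless) differences are organisational: you package the argument as a two-way ``disjunction lemma'' and decompose into complete $\bar{x}$-types over $M_0\bar{a}$ rather than over the parameter set alone, which has the minor virtue of making the consistency clause in the definition of $\cl_K$ automatic for every disjunct.
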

\begin{proof}
    Let $\psi(\bar{x}, \bar{y}) \in \cl_K(q)$. Since $T$ is $\omega$-categorical, there are finitely many types over a finite set of parameters and every such type is principal, so we can write $\psi(\bar{x},\bar{y}) := \bigvee_{i<l} \phi_i(\bar{x},\bar{y})$, where each $\phi_i$ isolates a complete type over the set of parameters in $\psi$. Since $\phi_i(\bar{x},\bar{y}) \vdash \psi(\bar{x},\bar{y})$ for each $i$, it follows that $\phi_i(\bar{x},\bar{y}) \in \cl_K(q)$, and thus, by assumption, $\phi_i(\bar{x},\bar{y}) \in \cl_K(r)$ for all $i < l$. Thus, for all $M$-invariant Morley sequences $(\bar{b}_j)_{j \in \omega}$ in $r$, the set $\{\phi_i(\bar{x}, \bar{b}_j) : j \in \omega\}$ is inconsistent for all $i < l$. 

    Assume, for contradiction, that $\psi(\bar{x},\bar{y}) \notin \cl_K(r)$. So there exists some $M$-invariant Morley sequence $(\bar{c}_i)_{i \in \omega}$ in $r$ such that $\{\psi(\bar{x}, \bar{c}_i) : i \in \omega\}$ is consistent. Let $\bar{a}$ be a realisation. Then, by the pigeonhole principle, we can find an infinite subsequence $(\bar{c}_{i_j})_{j \in \omega}$ and some $k$ such that $\bar{a} \models \{\phi_k(\bar{x}, \bar{c}_{i_j}) : j \in \omega\}$. But note that $(\bar{c}_{i_j})_{j \in \omega}$ is also an $M$-invariant Morley sequence in $r$, a contradiction.
\end{proof}
This result tells us that it suffices to look at types over finite parameter sets to determine $\leq_K$-minimality.
\begin{remark} \label{rem:non-disjointness-implies-conant-div}
    Note that, for a model $M \models T_{\mathbf{H}_4\text{-free}}$ and a tuple $\bar{b}$, if $\phi(\bar{x}, \bar{b}) \vdash \bar{x} \cap \bar{b} \not\subseteq M$, then $\phi(\bar{x}, \bar{b})$ Conant-divides over $M$. In other words, for every global $M$-invariant extension $q$ of $\tp(\bar{b}/M)$, $\phi(\bar{x}, \bar{y}) \in \cl_K(q)$.
\end{remark}
\begin{lemma} \label{lem:trivial-kim-ind-gives-minimality}
    Let $q$ be a global type $\ind^\hti$-free over $M \models T_{\mathbf{H}_4\textnormal{-free}}$. For all $\bar{b} \models q|_M$ and all formulas $\phi(\bar{x}, \bar{b})$ isolating a complete type over $M'\bar{b}$ with $M' \subset M$ finite, if $\phi(\bar{x},\bar{y}) \in \cl_K(q)$, then $\phi(\bar{x}, \bar{b}) \vdash \bar{x} \cap \bar{b} \not\subseteq M$.
\end{lemma}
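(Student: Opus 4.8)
The plan is to prove the contrapositive: assuming $\phi(\bar{x}, \bar{b}) \not\vdash \bar{x} \cap \bar{b} \not\subseteq M$, I will show that $\phi(\bar{x}, \bar{y}) \notin \cl_K(q)$. The proof is essentially a careful unwinding of the definitions, resting on Corollary \ref{cor:hti-kim-ind-is-trivial} together with the fact that Morley sequences in a global $\ind^\hti$-free type are $\ind^\hti$-Morley.

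First I would observe that, since $\phi(\bar{x}, \bar{b})$ isolates a complete type over $M'\bar{b}$, it decides every equality $x_i = b_j$. Hence the hypothesis $\phi(\bar{x}, \bar{b}) \not\vdash \bar{x} \cap \bar{b} \not\subseteq M$ says precisely that no coordinate of $\bar{x}$ is forced to equal an element of $\bar{b} \setminus M$; equivalently, every realisation $\bar{a} \models \phi(\bar{x}, \bar{b})$ satisfies $\bar{a} \cap \bar{b} \subseteq M$. Fix one such realisation $\bar{a}$. By Corollary \ref{cor:hti-kim-ind-is-trivial}, from $\bar{a} \cap \bar{b} \subseteq M$ we obtain that $\tp(\bar{a}/M\bar{b})$ does not $\ind^\hti$-Kim-divide over $M$. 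In particular the formula $\phi(\bar{x}, \bar{b})$, which lies in $\tp(\bar{a}/M\bar{b})$, does not $\ind^\hti$-Kim-divide over $M$.

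Next I would unpack what this last statement means. By the definition of $\ind^\hti$-Kim-dividing, the failure of $\phi(\bar{x}, \bar{b})$ to $\ind^\hti$-Kim-divide over $M$ amounts to the assertion that for \emph{every} global $\ind^\hti$-free extension of $\tp(\bar{b}/M)$ and \emph{every} Morley sequence in it starting at $\bar{b}$, the associated set of instances of $\phi$ is consistent. Now, since $\bar{b} \models q|_M$ we have $q|_M = \tp(\bar{b}/M)$, so the given type $q$ is itself a global $\ind^\hti$-free extension of $\tp(\bar{b}/M)$. Choosing a Morley sequence $(\bar{b}_i)_{i \in \omega}$ in $q$ over $M$ with $\bar{b}_0 = \bar{b}$ — which is an $\ind^\hti$-Morley sequence precisely because $q$ is $\ind^\hti$-free over $M$ — the previous sentence yields that $\{\phi(\bar{x}, \bar{b}_i) : i \in \omega\}$ is consistent. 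By the definition of the Kim-dividing class, producing a single Morley sequence in $q$ whose set of instances of $\phi$ is consistent already shows $\phi(\bar{x}, \bar{y}) \notin \cl_K(q)$, which establishes the contrapositive.

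I expect no substantial obstacle here; the only point that requires genuine care is the logical shape of ``$\phi$ does not $\ind^\hti$-Kim-divide over $M$,'' whose negation quantifies universally over all $\ind^\hti$-free extensions of $\tp(\bar{b}/M)$ and all their Morley sequences. The crux is simply to notice that $q$ qualifies as one of these extensions (exactly because $\bar{b}$ realises $q|_M$), so the universal statement specialises to the very type $q$ whose Kim-dividing class we are analysing.
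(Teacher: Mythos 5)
Your proof is correct and follows essentially the same route as the paper's: prove the contrapositive by taking a realisation $\bar{a}$ of $\phi(\bar{x},\bar{b})$ with $\bar{a} \cap \bar{b} \subseteq M$, invoke Corollary \ref{cor:hti-kim-ind-is-trivial} to rule out $\ind^{\textnormal{hti}}$-Kim-dividing, and specialise the resulting universal statement to $q$ itself (which qualifies since $q$ is $\ind^{\textnormal{hti}}$-free over $M$ and $\bar{b} \models q|_M$) to conclude $\phi(\bar{x},\bar{y}) \notin \cl_K(q)$. Your write-up is in fact slightly more careful than the paper's, which silently uses the facts that $\phi$, isolating a complete type, decides all equalities $x_i = b_j$, and that $q$ is one of the extensions quantified over in the definition of Kim-dividing.
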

\begin{proof}
    Suppose that $\phi(\bar{x},\bar{b}) \vdash \bar{x} \cap \bar{b} \subseteq M$. Since $q$ is a global $M$-$\ind^\hti$-free type, by Corollary \ref{cor:hti-kim-ind-is-trivial}, $\{\phi(\bar{x},\bar{b}_i) : i \in \omega\}$ is consistent for any $(\bar{b}_i)_{i \in \omega}$ such that $\bar{b}_i \models q|_{M\bar{b}_{<i}}$ for all $i \in \omega$. This means that $\phi(\bar{x},\bar{y}) \notin \cl_K(q)$.
\end{proof}
\begin{proposition} \label{prop:indht-satisfies-kims-lemma}
    $\ind^\hti$ satisfies the relative Kim's lemma in $T_{\mathbf{H}_4\textnormal{-free}}$.
\end{proposition}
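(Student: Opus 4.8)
The plan is to unwind the definition of the relative Kim's lemma and then let the two preceding technical results do all the work. Let $q$ be a global type that is $\ind^\hti$-free over $M$. Since $\ind^\hti \implies \ind^{\textnormal{i}}$, the type $q$ is $M$-invariant, so we are in the regime contemplated by Remark \ref{rem:rel-kim-lemma-and-invariance}. What must be shown is that $q$ is $\leq_K$-least over $q|_M$, i.e.\ that $\cl_K(q) \subseteq \cl_K(r)$ for every global $M$-invariant extension $r$ of $q|_M$. So I would fix such an $r$ and aim for this single inclusion.

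The first move is to shrink the class of formulas that need to be tested, using $\omega$-categoricity via Lemma \ref{lem:omega-cat-only-inclusion-of-isolating-fmls}. Because $q$ and $r$ are both global $M$-invariant with $q|_M = r|_M$, it suffices to verify the implication ``$\phi \in \cl_K(q) \Rightarrow \phi \in \cl_K(r)$'' only for formulas $\phi(\bar{x},\bar{y})$ isolating a complete type over a finite subset of $M$; equivalently, for which $\phi(\bar{x},\bar{b})$ isolates a complete type over $M'\bar{b}$ when $\bar{b} \models q|_M$ and $M' \subset M$ is finite. This is the step that lets me replace an arbitrary formula by a well-behaved isolated one.

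The core is then a short chain. Fix such an isolated $\phi(\bar{x},\bar{y}) \in \cl_K(q)$ and let $\bar{b} \models q|_M$. By Lemma \ref{lem:trivial-kim-ind-gives-minimality}, membership in $\cl_K(q)$ together with $\ind^\hti$-freeness of $q$ forces $\phi(\bar{x},\bar{b}) \vdash \bar{x} \cap \bar{b} \not\subseteq M$; this is the only place where the triviality of $\ind^\hti$-Kim-dividing (Corollary \ref{cor:hti-kim-ind-is-trivial}) is used. But by Remark \ref{rem:non-disjointness-implies-conant-div}, any such non-disjointness-forcing formula lies in $\cl_K(r')$ for \emph{every} global $M$-invariant extension $r'$ of $\tp(\bar{b}/M)$. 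Since $\tp(\bar{b}/M) = q|_M = r|_M$ and $r$ is $M$-invariant, taking $r' = r$ gives $\phi(\bar{x},\bar{y}) \in \cl_K(r)$. Feeding this back into Lemma \ref{lem:omega-cat-only-inclusion-of-isolating-fmls} upgrades it to $\cl_K(q) \subseteq \cl_K(r)$, and as $r$ was arbitrary, $q$ is $\leq_K$-least over $q|_M$, which is exactly the relative Kim's lemma for $\ind^\hti$.

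I do not expect a genuine obstacle here, because the real difficulty has already been discharged in the earlier results: the triviality of Kim-dividing (Proposition \ref{prop:indht-kim-ind-is-trivial} and Corollary \ref{cor:hti-kim-ind-is-trivial}), obtained from the $\mathbf{H}_4$-freeness consistency computations, and the $\omega$-categorical reduction to isolated formulas. The present proposition is essentially the assembly of these three ingredients, so the only thing to be careful about is bookkeeping — making sure the realisation $\bar{b}$ of $q|_M$ and the variable partition $\bar{x},\bar{y}$ are matched consistently across the applications of Lemma \ref{lem:trivial-kim-ind-gives-minimality}, Remark \ref{rem:non-disjointness-implies-conant-div}, and Lemma \ref{lem:omega-cat-only-inclusion-of-isolating-fmls}.
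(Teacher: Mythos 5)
Your proposal is correct and follows essentially the same route as the paper's own proof: reduce to isolated formulas via Lemma \ref{lem:omega-cat-only-inclusion-of-isolating-fmls}, apply Lemma \ref{lem:trivial-kim-ind-gives-minimality} to force $\phi(\bar{x},\bar{b}) \vdash \bar{x} \cap \bar{b} \not\subseteq M$, and conclude with Remark \ref{rem:non-disjointness-implies-conant-div}. The only (harmless) addition is your explicit observation that $\ind^\hti$-freeness entails $M$-invariance of $q$, which the paper leaves implicit.
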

\begin{proof}
    Let $p \in S(M)$ and let $q$ be a global extension of $p$ $\ind^\hti$-free over $M$. We want to show that $q$ is a $\leq_K$-least extension of $p$. By Lemma \ref{lem:omega-cat-only-inclusion-of-isolating-fmls}, it is enough to show that, if $\phi(\bar{x},\bar{y})$ isolates a complete type over a finite set, then $\phi(\bar{x},\bar{y}) \in \cl_K(q)$ implies $\phi(\bar{x},\bar{y}) \in \cl_K(r)$ for all global $M$-invariant extensions $r$ of $p$. But notice that, by Lemma \ref{lem:trivial-kim-ind-gives-minimality}, $\phi(\bar{x},\bar{y}) \in \cl_K(q)$ implies that $\phi(\bar{x}, \bar{b}) \vdash \bar{x} \cap \bar{b} \not\subseteq M$ for any $\bar{b} \models q|_M$. Hence, by Remark \ref{rem:non-disjointness-implies-conant-div}, it follows that $\phi(\bar{x}, \bar{y}) \in \cl_K(r)$.
\end{proof}
\begin{proof}[Proof of Proposition \ref{prop:h4-free-is-tp2} (v.2)]
    By Proposition \ref{prop:h4-free-is-sop3}, $T_{\mathbf{H}_4\textnormal{-free}}$ is $\SOP_3$, and so, in particular, it is not simple. Since $\ind^\hti$ satisfies full existence and left extension by Lemma \ref{lem:hti-props} and the relative Kim's lemma by Proposition \ref{prop:indht-satisfies-kims-lemma}, it follows by Corollary \ref{cor:simple-or-tp2} that $T_{\mathbf{H}_4\textnormal{-free}}$ is $\TP_2$.
\end{proof}
This completes our second proof of Theorem \ref{thm:main}.
\begin{remark} \label{rem:no-symm-and-stationary-relation}
    Note that, by \cite[Lemma 4.3]{conant2017freeamalgamation}, there does not exist an independence relation $\ind$ over subsets of $\M \models T_{\mathbf{H}_4\text{-free}}$ satisfying invariance, full existence, symmetry, and stationarity over models, because for any pair $ab \in \M$ of distinct elements and $M \models T_{\mathbf{H}_4\textnormal{-free}}$, if $\models R(m, a, b)$ for some $m \in M$, then $\models \neg R(m, b, a)$, and so $ab \not\equiv_M ba$. This means that $T_{\mathbf{H}_4\textnormal{-free}}$ is not a free amalgamation theory, so we cannot apply Conant's methods to show $\NSOP_4$. The same argument also shows that we cannot use the criterion from \cite[Theorem 5.10]{d2025model} either. 
\end{remark}
\begin{remark}
    Mutchnik uses \cite[Theorem 3.25]{mutchnik2022conantindependence} (cf., Corollary \ref{cor:simple-or-tp2}) in the context of a theory with an independence relation $\ind$ that satisfies monotonicity, full existence, and stationarity over models. It is shown there that, if $\ind$ satisfies a generalization of freedom, known as generalized freedom, then satisfying the relative Kim's lemma and the symmetry of $\ind$-Kim-independence are equivalent, which means that we do not need to check them separately as we did here. However, it is unclear whether one can find such an independence relation in $T_{\mathbf{H}_4\text{-free}}$. 
\end{remark}
\begin{question}
    Is there an independence relation $\ind$ in $T_{\mathbf{H}_4\textnormal{-free}}$ satisfying monotonicity, full existence, and stationarity over models?
\end{question}
\bibliographystyle{amsalpha}
\bibliography{list}
\end{document}